\theoremstyle{plain}
\newtheorem{thm}{\protect\theoremname}[section]
  \theoremstyle{plain}
  \newtheorem{cor}[thm]{\protect\corollaryname}
  \theoremstyle{definition}
  \newtheorem{defn}[thm]{\protect\definitionname}
  \theoremstyle{remark}
  \newtheorem*{acknowledgement*}{\protect\acknowledgementname}
  \theoremstyle{plain}
  \newtheorem{lem}[thm]{\protect\lemmaname}
  \theoremstyle{plain}
  \newtheorem{prop}[thm]{\protect\propositionname}
  \theoremstyle{remark}
  \newtheorem{rem}[thm]{\protect\remarkname}
\newcommand{\hide}[1]{}
\DeclareMathOperator*{\dom}{dom}
\DeclareMathOperator*{\im}{image}
\DeclareMathOperator{\lcm}{lcm}
\DeclareMathOperator{\inj}{Inj}
\DeclareMathOperator{\nul}{null}
\DeclareMathOperator{\divergent}{div}
\DeclareMathOperator{\deficient}{def}
\DeclareMathOperator{\Nul}{Null}
\DeclareMathOperator{\Divergent}{Div}
\DeclareMathOperator{\Deficient}{Def}
\DeclareMathOperator{\Regular}{Reg}
 \DeclareMathOperator{\image}{Image}
\def\blfootnote{\xdef\@thefnmark{}\@footnotetext}
\date{}
  \providecommand{\acknowledgementname}{Acknowledgement}
  \providecommand{\corollaryname}{Corollary}
  \providecommand{\definitionname}{Definition}
  \providecommand{\lemmaname}{Lemma}
  \providecommand{\propositionname}{Proposition}
  \providecommand{\remarkname}{Remark}
\providecommand{\theoremname}{Theorem}
\begin{document}

\title{Every Borel automorphism without finite invariant measures admits
a two-set generator}

\author{Michael Hochman}

\maketitle
\blfootnote{Partially supported by ISF grant 1409/11 and ERC grant 306494}\blfootnote{\emph{2010 Mathematics Subject Classication}. 37B10, 37A35, 37A40, 37A99}
\begin{abstract}
We show that if an automorphism of a standard Borel space does not
admit finite invariant measures, then it has a two-set generator.
This implies that if the entropies of invariant probability measures
of a Borel system are all less than $\log k$, then the system admits
a $k$-set generator, and that a wide class of hyperbolic-like systems
are classified completely at the Borel level by entropy and periodic
points counts.
\end{abstract}
\tableofcontents{}

\section{Introduction}

\subsection{Background and statement of results}

Borel dynamics is the study of the action of an automorphism $T$,
or a group of automorphisms, on a standard Borel space $(X,\mathcal{B})$.
These objects appear throughout \,dynamical systems theory, lying
at the intersection of ergodic theory and topological dynamics, in
the first of which the system is additionally endowed with an invariant
measure, and in the second with topology which makes $T$ continuous.
But it is perhaps  more accurate to say that Borel dynamics lies somewhere
 between the two: since measurable maps are far more abundant than
continuous ones the category is ``looser'' than the topological
one, but, in the absence of a reference measure, maps are defined
everywhere, rather than almost-everywhere, and so morphisms preserve
substantially more of the structure than in ergodic theory. The systematic
development of the theory as a branch of dynamics began with the work
of Shelah and Weiss \cite{ShelahWeiss1982}, and over the past few
decades a close parallel has been established between Borel dynamics
and the ergodic theory of conservative transformations \cite{ShelahWeiss1982,Weiss1984,Weiss1989,Nadkarni1990}.
Another notable direction in the theory is the study of the orbit
relation of Borel actions, see e.g. \cite{JacksonKechrisLouveau2002}
(but this will not concern us here). Recently, and more directly related
to our work, Borel dynamics has come up in connection with the classification
of hyperbolic-like dynamics following Buzzi's work on entropy conjugacy
\cite{Buzzi2005,Hochman2013b}.

In this paper we resolve a longstanding problem on the existence and
size of generators for Borel automorphisms (i.e. actions of $\mathbb{Z}$).
Let us begin by describing the situation in ergodic theory, which
is classical and intimately related to entropy theory. Given a probability
measure $\mu$ on $(X,\mathcal{B})$, a measurable partition $\alpha=\{A_{i}\}$
is called a \emph{generator} (for $\mu$) if the family of iterates
$\{T^{j}\alpha\}_{j\in\mathbb{Z}}$ generates the $\sigma$-algebra
$\mathcal{B}$ up to $\mu$-null sets. The size of the smallest generator
is a reflection of the complexity of the system, and when $\mu$ is
$T$-invariant it is a classical theorem of Krieger that a $k$-set
generator for $\mu$ exists if (and, essentially, only if) $h_{\mu}(T)<\log k$,
where $h_{\mu}(T)$ denotes the Kolmogorov-Sinai entropy.\footnote{In fact, $h_{\mu}(T)$ can be expressed purely in terms of the size
of generators: writing $g_{\mu}(T)$ for the cardinality of the smallest
$\mu$-generator of $T$, we have 
\[
h_{\mu}(T)=\lim_{n\rightarrow\infty}\frac{1}{n}\log g_{\mu}(T^{n})
\]
} When $h_{\mu}(T)=\infty$ no finite generator can exist, but, by
a theorem of Rohlin, a countable one does. 

When one moves away from invariant probability measures to more general
ones the picture changes drastically. A measure $\mu$ is \emph{conservative
}for $T$ if $T$ preserves the class of $\mu$-null sets and $\mu(A)=0$
for every wandering set $A$, where $A$ is \emph{wandering} if its
iterates $T^{n}A$, $n\in\mathbb{Z}$, are pairwise disjoint. Krengel
\cite{Krengel970} showed that \emph{every }ergodic conservative measure
$\mu$ that is not equivalent to an invariant probability measure
admits a two-set generator. This is another manifestation of the absence
of a good entropy theory for conservative transformations.\footnote{Several notions of entropy have been suggested for conservative transformation,
e.g. \cite{Krengel1967,Parry1969,JanvresseMeyerovitchRoy2010}, but
these generally lack many important properties present in the classical
notion.}

We now return to the Borel setting. Here, a partition $\alpha=\{A_{i}\}_{i\in I}$
is called a\emph{ (Borel) generator} for $(X,\mathcal{B},T)$ if the
$\sigma$-algebra generated by $\{T^{j}\alpha\}_{j\in\mathbb{Z}}$
is equal to $\mathcal{B}$. Such a partition clearly must be a generator
for every conservative measure of $T$, so the presence of invariant
probability measures of high entropy poses an obstruction to the existence
of finite Borel generators, but the theorems of Rohlin and Krengel
make it plausible that countable Borel generators could exist. That
they always do exist for free actions\footnote{A free $\mathbb{Z}$-action is one without periodic points. In order
for countable generators to exist, some assumption on periodic points
is needed, since a countable generator cannot exist if there are more
than countably many periodic points. If there are only countably many
periodic points, then they pose no obstruction.} was established by Benjamin Weiss \cite{Weiss1989}, who showed that
every free Borel system $(X,T)$ admits a countable generator. Weiss
proved his theorem modulo a the sigma-filter generated by wandering
sets, but this qualification can be removed, see e.g. \cite[Corollary 7.6]{Tserunyan2015}.
We shall later use the existence of such a generator.

Weiss's theorem left open the question of finite generators when there
is no obstruction from the finite invariant measures. Specifically,
Weiss asked in \cite{Weiss1989} whether, in the total absence of
invariant probability measures, finite generators must exist (again,
he allowed wandering sets to be neglected. The stronger version appears
in \cite[Problem 5.7 and 6.6(A)]{JacksonKechrisLouveau2002}). The
question was partly answered recently by Tserunyan \cite{Tserunyan2015},
who gave an affirmative answer when $T$ is a continuous map of a
locally compact separable metric space. Our main result of the present
paper is an answer to the problem in general:
\begin{thm}
\label{thm:main}Every Borel system without invariant probability
measures\footnote{This assumption automatically ensures that $T$ acts freely, since
a finite  orbit would carry an invariant probability measure.} admits a two-set generator. 
\end{thm}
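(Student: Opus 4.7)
The plan is to combine Weiss's countable Borel generator theorem (cited in the introduction) with the compressibility of Borel $\mathbb{Z}$-systems that admit no invariant probability measure, compressing a countable symbolic naming into a binary one by means of a Borel hierarchical code.

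Step one: invoke Weiss's theorem to obtain a countable Borel generator $\beta=\{B_n\}_{n\geq 0}$, and consider the induced $T$-equivariant Borel injection $\pi:X\to\mathbb{N}^{\mathbb{Z}}$, $\pi(x)_i=n$ iff $T^i x\in B_n$. To prove the theorem it suffices to produce a Borel set $A\subseteq X$ for which the binary naming map $\Phi:X\to\{0,1\}^{\mathbb{Z}}$, $\Phi(x)_i=\mathbf{1}_A(T^i x)$, is injective; then $\{A,X\setminus A\}$ is a two-set generator.

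Step two: use the hypothesis via compressibility of the orbit relation (Nadkarni's theorem and its consequences). This produces, for each $N$, a Borel complete section whose consecutive return gaps along each orbit exceed $N$, and nested families of such sections that partition each orbit into blocks of chosen large lengths at every scale. Crucially, compressibility also gives the freedom to rearrange points across orbits by Borel isomorphisms that are piecewise powers of $T$, so that atoms of $\beta$ with large index (which will require long codewords) can be redistributed into sufficiently long blocks.

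Step three: construct $A$ via nested Borel marker sets $M_1\supseteq M_2\supseteq\ldots$ together with a self-synchronizing prefix-free binary code. Each $M_k$ is signaled in $\Phi(x)$ by a distinguished binary pattern that the code cannot produce elsewhere. Between consecutive markers of level $k$ one writes a binary encoding of the $\pi$-data in that interval; the decoder, reading $\Phi(x)$ alone, locates the markers from their pattern and peels off the nested codewords to recover $\pi(x)$, and hence $x$.

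The main obstacle lies in step three: designing a Borel, $T$-equivariant scheme under which $\Phi$ is injective for \emph{every} $x\in X$ simultaneously. One must accommodate unboundedly large symbols and hence unboundedly long codewords, guarantee that markers are detectable from $\Phi(x)$ by pattern alone, and verify that the compressibility-based redistribution places every point into a block long enough to encode its name. Balancing these three ingredients---the countable generator, compressibility-based redistribution, and hierarchical self-synchronizing codes---is the heart of the argument; the individual components are classical or standard, but their compatible assembly is where all the difficulty lies.
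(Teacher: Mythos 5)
Your outline assembles the right raw ingredients (Weiss's countable generator, Nadkarni's theorem, an equivariant binary encoding scheme), but it defers the entire mathematical content of the theorem to the step you label as "the main obstacle," and that obstacle is not a matter of careful bookkeeping — it is where the hypothesis actually has to be used quantitatively, and your sketch contains no mechanism for doing so. To write the countable itinerary $\pi(x)$ into a binary sequence at density one, you must show that the information content of $\pi(x)$ per unit length of orbit is strictly below $\log 2$ (or, more precisely, that there is a definite density of "spare room" along every orbit into which the data can be injected equivariantly). Nothing in your proposal produces such a bound. Complete sections with large return gaps exist for \emph{every} aperiodic Borel system (this is just the marker lemma) and so cannot be where the hypothesis enters; and "rearranging by piecewise powers of $T$" only moves data within an orbit, which is useless unless you already know the orbit has room to spare. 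Concretely, your scheme has no answer for a point $x$ whose empirical frequencies $s(x,B_n)$ fail to exist at all, nor for one whose empirical entropy with respect to $\beta$ exceeds $\log 2$; in either case no prefix-free hierarchical code, however cleverly synchronized, can be injective.

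The paper's proof is devoted precisely to filling this hole. Nadkarni's compressibility criterion is used to show that every point exhibits one of three specific statistical pathologies — it is null for some set ($s(x,A)=0$), divergent for some set ($S_n(x,A)$ has no limit), or deficient for a partition ($\sum s(x,A_i)<1$) — and each case requires a separate generator theorem. The null case is easy; the divergent case rests on a finitary form of Bishop's upcrossing inequality, which gives a \emph{universal} density bound on points with many fluctuations and hence creates the needed spare room; and the deficient case requires both a device for amplifying the defect and a measureless, empirical-frequency version of the Krieger generator theorem. The passage from the resulting $K$-set generator to a two-set generator is then an induced-map (Abramov-type) argument, which is roughly the only part of the proof your marker-and-code picture actually captures. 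As it stands, your proposal restates the problem rather than solving it: you would need to supply the trichotomy and the three density/entropy estimates before step three could be carried out.
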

More generally, given a non-trivial mixing shift of finite type $Y\subseteq\Sigma^{\mathbb{Z}}$,
one can find a generator $\alpha=\{A_{i}\}_{i\in\Sigma}$ such that
the itineraries lie in $Y$. 

In \cite[Theorem 1.5]{Hochman2013b} we showed how to obtain a uniform
Krieger generator theorem; more precisely, if $(X,T)$ is a free Borel
system with $h_{\mu}(T)<\log k$ for every $T$-invariant probability
measure $\mu$, then there is an invariant Borel subset $X_{0}\subseteq X$
supporting all finite $T$-invariant measures, and $(X_{0},T|_{X_{0}})$
admits a $k$-set generator. Combining this with  Theorem \ref{thm:main}
to find a generator for $X\setminus X_{0}$, and working a little
to make the images disjoint, we get the following corollary (see Section
\ref{sec:Proofs-of-Corollaries}):
\begin{cor}
\label{cor:generator-with-entropy}Suppose that $(X,T)$ is a free
Borel system with $h_{\mu}(T)<\log k$ for every $T$-invariant probability
measure $\mu$ (alternatively, for every such measure $\mu$ with
a single exception which is Bernoulli of entropy $\log k$). Then
there exists a $k$-set Borel generator for $T$.
\end{cor}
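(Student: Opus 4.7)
The strategy is the one indicated in the paragraph preceding the statement: split $X$ into invariant Borel pieces $X_0$ (from the Krieger-type Borel generator result~\cite[Theorem~1.5]{Hochman2013b}) and $X_1 := X \setminus X_0$, apply Theorem~\ref{thm:main} to $X_1$, and fuse the two $k$-set partitions. Applying \cite[Theorem~1.5]{Hochman2013b} in the form that tolerates a single Bernoulli invariant measure of entropy $\log k$, one obtains an invariant Borel $X_0 \subseteq X$ supporting all finite $T$-invariant measures together with a $k$-set Borel generator $\alpha^0 = \{A_i^0\}_{i=1}^k$ for $T|_{X_0}$. The complement $X_1$ carries no invariant probability measure at all, and so the strengthened form of Theorem~\ref{thm:main} stated in the paragraph following it supplies a $k$-set Borel generator $\alpha^1 = \{A_i^1\}_{i=1}^k$ on $X_1$ whose itineraries lie in any prescribed non-trivial mixing shift of finite type $Y \subseteq \Sigma^{\mathbb{Z}}$, where $\Sigma = \{1,\dots,k\}$. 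Define the candidate partition $C_i := A_i^0 \cup A_i^1$.

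Let $\varphi : X \to \Sigma^{\mathbb{Z}}$ be the itinerary map of $\{C_i\}$. Each restriction $\varphi|_{X_j}$ is already a Borel isomorphism onto its image, so $\{C_i\}$ generates on $X$ exactly when $\varphi(X_0) \cap \varphi(X_1) = \varnothing$, for then the Borel set $X_0$ itself is recoverable from the itinerary. I would secure this by choosing $Y$ to forbid some fixed word $w \in \Sigma^{*}$ and recoding $\alpha^0$ on $X_0$ so that every orbit's $\alpha^0$-itinerary contains $w$. Concretely, using the freeness of $T|_{X_0}$ and a standard Weiss-style marker construction (bootstrapped from the countable Borel generator of~\cite{Weiss1989}), one selects a Borel complete section $B \subseteq X_0$ with return times bounded below by $|w|$, and then overwrites the $\alpha^0$-labels on the $|w|$ iterates immediately following each visit to $B$ by the letters of $w$. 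Choosing $B$ to be a Borel cylinder in a window of $\alpha^0$-symbols that lies outside the insertion block allows one to recover the marker positions from the recoded itinerary, so the recoded partition remains a $k$-set generator, while every orbit of $X_0$ now has an itinerary containing $w$.

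The main obstacle is this recoding step. The dynamics itself is entirely packaged in the two invoked theorems, but the simultaneous requirements that every $X_0$-itinerary contain the forbidden word $w$ of $Y$ \emph{and} that the recoded partition still separate points of $X_0$, all without inflating the alphabet past $k$ symbols, constitute the only point demanding real care; everything else is bookkeeping. The Bernoulli-exception case of the corollary is absorbed transparently inside the Hochman2013b step, using the Bernoulli-tolerant variant of Krieger's theorem that underlies it.
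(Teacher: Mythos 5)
Your decomposition $X=X_0\cup X_1$ and the use of \cite[Theorem 1.5]{Hochman2013b} on $X_0$ and of Theorem \ref{thm:main} on $X_1$ match the intended skeleton, but the step you yourself flag as the delicate one --- recoding $\alpha^0$ on $X_0$ so that every itinerary contains a word $w$ forbidden in $Y$ --- is a genuine gap, not mere bookkeeping. First, overwriting the $|w|$ symbols after each visit to the complete section $B$ destroys data: recovering the \emph{positions} of the markers from the recoded itinerary (which is itself delicate, since insertions can create spurious occurrences of whatever recognition pattern you use) does not recover the \emph{values} that were overwritten, and you give no mechanism for storing them elsewhere. Second, in the Bernoulli-exception case this cannot be repaired by brute force: if the overwritten positions have positive density $\delta$ along a $\mu$-generic orbit for the Bernoulli measure $\mu$ of entropy $\log k$, the pushforward of $\mu$ under the recoded itinerary map has entropy at most $(1-\delta)\log k<h_\mu(T)$, so by Krieger's theorem the recoded partition cannot generate for $\mu$. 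Making the markers have density zero evades the entropy count but leaves the recovery problem intact, and solving it there is essentially re-proving the embedding theorem you are trying to quote.

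The paper sidesteps recoding entirely. It takes the embedding $\pi_0:X_0\to Y$ from \cite[Theorem 1.5]{Hochman2013b} as given, chooses pairwise disjoint mixing SFTs $Y_1,Y_2,\ldots\subseteq Y$, and embeds $X_1=X\setminus X_0$ into $Y_1$ by Theorem \ref{thm:main}. The images of $\pi_0$ and $\pi_1$ may collide, but the colliding part $X_2=\pi_0^{-1}(\image(\pi_0)\cap\image(\pi_1))$ is Borel isomorphic (via $\pi_1^{-1}\pi_0$) to a subset of $X_1$, hence carries no invariant probability measure, so Theorem \ref{thm:main} re-embeds it into the fresh SFT $Y_2$; iterating, $X_{n+1}=\pi_0^{-1}(\image(\pi_0)\cap\image(\pi_n))$ is re-embedded into $Y_{n+1}$, and $\pi_0|_{X\setminus\bigcup X_i}\cup\bigcup\pi_i$ is the desired injection. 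The key observation you are missing is that any overlap between the two images pulls back to an invariant set \emph{without invariant measures}, so Theorem \ref{thm:main} can be applied to it again; disjointness is then achieved by pushing successive corrections into disjoint sub-SFTs rather than by modifying the generator on $X_0$. I would rework your argument along these lines.
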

We also note the following related dichotomy, which was conditionally
derived in \cite[Theorem 9.5]{Tserunyan2015} from Theorem \ref{thm:main}:
A Borel system admits a finite generator if and only if it admits
no invariant probability measure of infinite entropy.

\subsection{Application to hyperbolic-like dynamics}

It is classical and well known that hyperbolic-like maps are ``essentially''
determined by their periodic points counts and entropy. The ``top''
of the system usually consists of a unique invariant probability measure
of maximal entropy, which is ergodically isomorphic to a Bernoulli
shift. Thus, by Ornstein theory \cite{Ornstein1970}, when the entropies
of two such systems are equal, their entropy-maximizing measures are
isomorphic. In many special cases, e.g. for mixing shifts of finite
type, this isomorphism can be made continuous on a set of full measure,
as in the finitary isomorphism theory of Keane and Smorodinsky \cite{KeanSmorodinsky1979},
and even extended farther ``down'' to some of the ``low-entropy''
part of the phase, as is the almost-conjugacy theorem of Adler and
Marcus \cite{AdlerMarcus1979}. 

More recently Buzzi introduced the notion of entropy conjugacy \cite{Buzzi2005},
whereby in the problem above one replaces continuity by measurability
in the hope of extending the isomorphism results to a larger class
of systems \cite{BoyleBuzziGomez2006,Buzzi2005}. One also hopes to
extend the isomorphisms farther into the low-entropy part of the systems,
ideally to all of the ``free part'' of the system, that is, to the
complement of the periodic points.\footnote{The periodic points themselves can be dealt with separately to give
an isomorphism if their numbers are compatible} This possibility was raised in \cite{Hochman2013b}, where it was
partly achieved for a large family of systems on sets supporting all
non-atomic invariant probability measures (but not all conservative
ones). See also \cite{BoyleBuzzi2014}. Isomorphisms between the entire
free parts of equal-entropy strongly positively recurrent Markov shifts
were constructed recently by Boyle, Buzzi and G\'omez \cite{BoyleBuzziGomez2014},
using the special presentations of such subshifts. Using the arguments
from \cite{Hochman2013b} together with Theorem \ref{thm:main} one
can give a quite general result in this direction.
\begin{cor}
\label{cor:universal-Borel-systems}Let $h>0$. Then, up to Borel
isomorphism, there is a unique homeomorphism $T$ of a Polish space
satisfying the following properties: (a) $T$ acts freely, (b) every
$T$-invariant probability measure has entropy $\leq h$, and equality
occurs for a unique measure which is Bernoulli, (c) $T$ admits embedded
mixing SFTs of topological entropy arbitrarily close to $h$.

In particular, if two systems from the classes listed below have the
same topological entropy, then they are isomorphic, as Borel systems,
on the complements of their periodic points. The classes are: Mixing
positively-recurrent countable-state shifts of finite type, mixing
sofic shifts, Axiom A diffeomorphisms, intrinsically ergodic mixing
shifts of quasi-finite type.
\end{cor}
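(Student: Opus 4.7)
The plan is to prove the universality statement in the first paragraph of the corollary and then deduce the classification by verifying that each listed class satisfies (a)--(c). For universality I would run a Borel Cantor--Bernstein argument: given two systems $(X,T)$ and $(X',T')$ satisfying (a)--(c) with the same $h$, it suffices to construct equivariant Borel injections in both directions, and by symmetry I focus on producing $\phi:(X,T)\hookrightarrow(X',T')$.

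I would split $X=X_0\sqcup X_1$ into invariant Borel pieces, where $X_0$ is the generic set of the unique maximal measure $\mu$ (Bernoulli of entropy $h$) and $X_1$ is its complement. On $X_0$ the Borel refinement of Ornstein's theorem developed in \cite{Hochman2013b} yields a Borel equivariant isomorphism between invariant conull subsets of $X_0$ and of the corresponding set $X_0'\subseteq X'$. On $X_1$ every invariant probability measure has entropy strictly less than $h$, since the maximizer has been excised; however, such entropies may accumulate to $h$, so $X_1$ cannot in general be embedded into a single SFT of entropy below $h$. I would therefore stratify $X_1=\bigsqcup_{n\ge 1} Z_n$ along entropy cutoffs $h-\varepsilon_n\searrow 0$, letting $Z_n$ be the Borel invariant set of points whose ergodic decomposition is supported on measures of entropy $\le h-\varepsilon_n$; Borelness of $Z_n$ uses upper semicontinuity of entropy along the ergodic decomposition together with a standard Borel selection. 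Using hypothesis (c) for $X'$, fix for each $n$ a mixing SFT $Y_n'\subseteq X'$ with $h_{\mathrm{top}}(Y_n')\in(h-\varepsilon_n,\,h-\tfrac{1}{2}\varepsilon_{n+1})$. Combining Theorem \ref{thm:main} in its SFT form (on the part of $Z_n$ carrying no finite invariant measure) with the uniform Krieger theorem of \cite[Theorem 1.5]{Hochman2013b} (on the part that does), one obtains equivariant Borel embeddings $Z_n\hookrightarrow Y_n'$. A measurable relabelling trick analogous to the one used in the proof of Corollary \ref{cor:generator-with-entropy} makes the images of distinct strata land in pairwise disjoint invariant subsets of $X'$, producing a single equivariant Borel injection $X_1\hookrightarrow X'$; combined with the $X_0$ piece this gives $\phi$.

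For the classification statement, one checks that each listed class, restricted to the complement of its periodic points, satisfies (a)--(c) with $h$ equal to its topological entropy. Freeness (a) is automatic. Uniqueness and Bernoullicity of the measure of maximal entropy (b) is classical in each case: Gurevich together with Ornstein theory for positively-recurrent countable-state SFTs, factoring onto an SFT cover for the sofic case, Bowen's spectral decomposition for Axiom A, and the assumed intrinsic ergodicity together with Buzzi's Bernoullicity for quasi-finite type shifts. The embedded approximating mixing SFTs required by (c) come, respectively, from truncation of the alphabet (countable SFTs), lifting-and-pruning in a cover (sofic), nested Markov partitions (Axiom A), and Buzzi's Markov approximations (quasi-finite type).

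The main obstacle I anticipate is the treatment of $X_1$: because invariant probability measures on $X_1$ may have entropies accumulating to $h$, no single SFT of entropy below $h$ suffices, and one is forced to stratify $X_1$ and embed each stratum into a different SFT whose entropy is increasing toward $h$. Keeping the stratification Borel, the partial embeddings equivariant, and their images mutually disjoint inside $X'$, all while making the countable union of partial embeddings well-defined on all of $X_1$, is the technical heart of the argument and is where Theorem \ref{thm:main} (via its SFT-valued strengthening) plays its decisive role.
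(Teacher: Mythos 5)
Your proposal is correct and follows essentially the same route as the paper: Ornstein's theorem for the unique maximal measure, \cite[Theorem 1.5]{Hochman2013b} for the part supporting sub-maximal measures, Theorem \ref{thm:main} (in its SFT-valued form) for the invariant set carrying no probability measures, the disjointification device from the proof of Corollary \ref{cor:generator-with-entropy}, and a Borel Cantor--Bernstein argument. The only differences are ones of packaging: you make explicit the entropy stratification of the low-entropy part (which the paper absorbs into its citation of \cite[Theorem 1.5]{Hochman2013b}, and which should be phrased via generic points of ergodic measures rather than ``ergodic decompositions of points''), and you spell out the verification of (a)--(c) for the listed classes, which the paper leaves implicit.
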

It remains an open problem whether, on the complement of the periodic
points, the isomorphism can be made continuous in any non-trivial
cases, e.g. between equal-entropy mixing shifts of finite type which
are not topologically conjugate \cite[Problem 1.9]{Hochman2013b}.

\subsection{Remark about the role of wandering sets and conservative measures}

Although it has no direct bearing on the proof, we digress to say
a few words about the role of conservative measures and wandering
sets. In ergodic theory one generally neglects nulsets. In Borel dynamics,
the appropriate class of dynamically negligible sets is the family
$\mathcal{W}\subseteq\mathcal{B}$ of all countable unions of (measurable)
wandering sets. It is easy to check that $\mathcal{W}$ is closed
under taking measurable subsets and countable unions, i.e. it is a
$\sigma$-ideal. Many results from ergodic theory, including Poincar\'{e}
recurrence, Rohlin's tower lemma, and hyperfiniteness of the orbit
relation for an automorphism, can be proved in the Borel setting if
we work modulo $\mathcal{W}$. 

The $\sigma$-ideal $\mathcal{W}$ is closely related to conservative
measures: by definition, every $A\in\mathcal{W}$ is a nullset for
every $T$-conservative measure, and Shelah and Weiss \cite{ShelahWeiss1982,Weiss1984}
proved the converse, showing that $\mathcal{W}$ consists of precisely
those $A\in\mathcal{B}$ which are nullsets for every $T$-conservative
measure. In particular, $T$ is dissipative (i.e. $X\in\mathcal{W}$)
if and only if it admits no conservative measures. This implies that
results which hold a.e. for conservative measures hold everywhere
modulo $\mathcal{W}$, because the set of points where the property
fails is null for every conservative measure, and hence the set of
these points is in $\mathcal{W}$. 

Now, Krengel's generator theorem says that in a Borel system without
invariant probability measures, we can find a two-set generator for
every conservative measure. The discussion above hints that one should
be able to find a finite Borel generator, at least modulo $\mathcal{W}$.
Unfortunately, it is unclear how to glue these generators together.
One might hope to partition the space into invariant Borel sets, each
of which supports a unique conservative measure; then, at least, the
partitions given by Krengel's theorem would be of disjoint sets and
we could take their union, leaving only the measurability question.
Unfortunately, if the system admits conservative measures at all,
then no such partition exists\emph{,}\footnote{This is in contrast to the theorem of Varadarajan \cite{Varadarajan1963},
which gives a partition into invariant Borel sets each supporting
a unique invariant probability measure. }\emph{ }see e.g. Weiss \cite{Weiss1984} (alternatively, this is a
consequence of the Glimm-Effros theorem and standard results on topologizing
Borel systems). This makes it highly unlikely that this ``divide
and conquer'' strategy can work.

\subsection{Structure of the proof of Theorem \ref{thm:main}}

Our proof of Theorem \ref{thm:main} is made up of three separate
generator theorems, each of which applies to points exhibiting a different
form of ``non-stationary'' statistical behavior. By statistical
behavior we mean the asymptotics of the number of visits to a set:
For $A\in\mathcal{B}$ and $x\in X$ let 
\[
S_{n}(x,A)=\frac{1}{n}\sum_{i=0}^{n-1}1_{A}(T^{n}x)
\]
If the limit as $n\rightarrow\infty$ exists, we denote it by
\[
s(x,A)=\lim_{n\rightarrow\infty}S_{n}(x,IA)
\]
We write $\overline{s}(x,A)$ and $\underline{s}(x,A)$ for the upper
and lower limits.\footnote{We have chosen to work with forward averages because it has some mild
simplifying effects, though also some odd side-effects. In some places
we will need to consider two-sided averages as well and it would have
been possible to use these exclusively.}
\begin{defn}
\label{def:null-div-def-points}Let $x\in X$ and $A\in\mathcal{B}$,
and let $\alpha=\{A_{i}\}_{i=1}^{\infty}\subseteq\mathcal{B}$ be
a measurable partition of $X$. We say that
\begin{itemize}
\item $x$ is $A$\emph{-null }if $x\in\bigcup_{n=-\infty}^{\infty}T^{n}A$
and $s(x,A)=0$. 
\item $x$ is $A$\emph{-divergent }is $S_{n}(x,A)$ diverges. 
\item $x$ is $\alpha$\emph{-deficient }if $s(x,A_{i})$ exists and is
positive for all $i$ and $\sum s(x,A_{i})<1$. 
\end{itemize}
The sets of points satisfying each of the above conditions are denoted
$\nul(A)$, $\divergent(A)$ and $\deficient(\alpha)$, respectively.
\end{defn}
These behaviors are ``non-stationary'' in the following sense. If
$\mu$ is an ergodic probability measure for $T$ and $\mu(A)>0$,
then by the ergodic theorem the frequencies $s(x,A)$ exist $\mu$-a.s.
and are equal to $\mu(A)$, which is positive. Hence $x$ is neither
$A$-divergent nor $A$-deficient. Similarly, for any partition $\alpha=\{A_{i}\}$,
for $\mu$-a.e. $x$ we have $\sum s(x,A_{i})=\sum\mu(A_{i})=1$,
so $x$ is not $\alpha$-defective. 

The sets $\nul(A),\divergent(A)$ and $\deficient(\alpha)$ are measurable
and $T$-invariant, and the core of this paper is devoted to proving
that the restriction of $T$ to each of them has a finite generator.
These constructions share some common infrastructure (see Section
\ref{sec:General-strategy}), but the underlying mechanism in each
case is rather different. The construction for null points is quite
simple, and related to the construction of generators for infinite
invariant measures. We give the details in Section \ref{sec:A-generator-theorem-for-nul-sequences}.
The construction for divergent points is new, and of independent interest:
it gives an effective and optimal (though in no sense efficient) source
coding algorithm for sequences that do not have a limiting mean value.
This may be seen as another manifestation of the necessity of stationary
statistics for the existence of an entropy theory. The details appear
in Section \ref{sec:A-generator-theorem-for-div-points}. The deficient
case, given in Section \ref{sec:A-generator-theorem}, is the most
involved of the three, though also in a sense the most classical.
It partly relies on the other two cases, and it is the only one where
entropy makes an appearance. In fact a crucial component will be a
version of the Krieger generator theorem, given in Section \ref{sec:Krieger},
that uses only empirical statistics to find a finite partition generating
the same $\sigma$-algebra as a given countable partition of finite
empirical entropy.

Let us now explain how all this comes together to give Theorem \ref{thm:main}.
The starting point is Nadkarni's beautiful characterization of Borel
systems which do not admit finite invariant measures. Recall that
a set $D\in\mathcal{B}$ is called a \emph{sweeping out set }if $\bigcup_{i\in\mathbb{Z}}T^{i}D=X$. 
\begin{thm}
[Nadkarni, \cite{Nadkarni1990}] \label{thm:Nadkarni}Let $(X,\mathcal{B},T)$
be a Borel system. Then $T$ does not admit an invariant probability
measure if and only if there exists a sweeping out set $D\in\mathcal{B}$,
a measurable partition $\{D_{n}\}_{n=1}^{\infty}$ of $X$, and integers
$n_{1},n_{2},\ldots$, such that the sets $T^{n_{i}}D_{i}$ are pairwise
disjoint, and $T^{n_{i}}D_{i}\subseteq X\setminus D$ for all $i$.
\end{thm}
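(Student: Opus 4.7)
For the easy direction, suppose such $D$, $\{D_i\}$, and $\{n_i\}$ exist, and let $\mu$ be any $T$-invariant probability measure. Invariance gives $\mu(T^{n_i}D_i) = \mu(D_i)$, and pairwise disjointness together with containment in $X \setminus D$ yields
\[
1 = \sum_i \mu(D_i) = \sum_i \mu(T^{n_i}D_i) \leq \mu(X \setminus D),
\]
forcing $\mu(D) = 0$. Since $D$ sweeps out, $\mu(X) \leq \sum_{n \in \mathbb{Z}} \mu(T^n D) = 0$, a contradiction.

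The nontrivial direction is to build the compressing decomposition from the hypothesis that no $T$-invariant probability measure exists. My plan is a marriage-theorem argument on the orbit equivalence relation. First, fix a Borel sweeping out set $D$ such that $X \setminus D$ also meets every orbit infinitely often (for a free action, this can be arranged via a Weiss-style marker construction). The goal is then to produce a Borel injection $f : X \to X \setminus D$ with $f(x)$ lying in the $T$-orbit of $x$. Such an $f$ immediately yields the theorem: enumerate $\mathbb{Z} = \{n_1, n_2, \ldots\}$, set $D_i = \{x : f(x) = T^{n_i}x\}$, and observe that $\{D_i\}$ is a Borel partition while $T^{n_i}D_i = f(D_i)$ are pairwise disjoint subsets of $X \setminus D$ by injectivity of $f$.

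The construction of $f$ is the main work. I would attempt it by finitary approximation: on longer and longer finite orbit segments, apply the classical Hall marriage theorem to match source points to targets in $X \setminus D$, then extract a coherent global matching by a Borel uniformization/compactness argument. Hall's condition is immediate within a single orbit since $X \setminus D$ is infinite there, so the real subtlety lies in assembling these local matchings into a single Borel $f$ in a globally consistent way. The main obstacle---and the heart of Nadkarni's argument---is that any obstruction to coherence must concentrate on some invariant Borel subset $Y$, and a natural averaging procedure on $Y$ yields a $T$-invariant \emph{finitely additive} probability measure. One must then upgrade this to a genuinely $\sigma$-additive $T$-invariant probability measure, contradicting the hypothesis. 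This upgrade exploits amenability of $\mathbb{Z}$ together with descriptive set-theoretic arguments on the orbit equivalence relation of $Y$, and is the technical crux that I expect to be the hardest step.
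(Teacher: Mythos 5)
This theorem is not proved in the paper at all: it is quoted verbatim from Nadkarni \cite{Nadkarni1990} and used as a black box, so there is no internal proof to compare against. Your ``easy'' direction is complete and correct: invariance plus disjointness forces $1=\sum_i\mu(D_i)\leq 1-\mu(D)$, hence $\mu(D)=0$, and the sweeping-out property then kills $\mu$. Your reduction of the hard direction to finding a Borel orbit-preserving injection $f:X\to X\setminus D$ is also the right reformulation (this is exactly the notion of \emph{compressibility} that Nadkarni works with, and freeness is automatic under the hypothesis since a periodic orbit carries an invariant measure).

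The hard direction itself, however, is not proved, and the route you sketch contains a conceptual error at its crux. You propose that a failure of the matching construction concentrates on an invariant set $Y$, that averaging on $Y$ produces a $T$-invariant \emph{finitely additive} probability measure, and that one then ``upgrades'' this to a $\sigma$-additive one using amenability and descriptive set theory. But finitely additive invariant probability measures exist for \emph{every} $\mathbb{Z}$-action with a nonempty phase space, precisely because $\mathbb{Z}$ is amenable (take a Banach limit of orbital averages of indicator functions); their existence carries no information and contradicts nothing. The entire content of Nadkarni's theorem is countable additivity, and there is no general mechanism for promoting a finitely additive invariant measure to a countably additive one in the Borel category --- indeed the paper explicitly notes that Tserunyan's argument produces exactly such a finitely additive measure and must invoke a topological hypothesis to make it countably additive. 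Nadkarni's actual argument avoids this trap: from non-compressibility one builds a genuinely $\sigma$-additive measure directly, via a Tarski-style comparison/exhaustion of Borel sets under orbit-equidecomposability (a Dedekind-cut construction of the ``ratio'' of a set to a reference set, with countable additivity coming from the exhaustion lemma), not by first passing through a finitely additive object. As written, your plan would stall exactly where you predict the difficulty lies, and the missing step is not a technicality but the theorem itself.
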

Given sets $D,D_{1},D_{2},\ldots\in\mathcal{B}$ and integer $n_{1},n_{2},\ldots\in\mathbb{Z}$
as in the theorem above, let $\mathcal{A}$ denote the (countable)
algebra generated by $D,D_{1},D_{2},\ldots$. We claim that every
$x\in X$ is either null for some $A\in\mathcal{A}$, divergent for
some $A\in\mathcal{A}$, or deficient for the partition $\alpha=\{D_{i}\}_{i=1}^{\infty}$.
Indeed suppose $x$ is not null or divergent for any $A\in\mathcal{A}$.
By non-divergence, the frequencies $s(x,A)$ exists for all $A\in\mathcal{A}$,
so the set-function $\mu_{x}(A)=s(x,A)$ is a well-defined finitely
additive measure on $\mathcal{A}$ that is invariant in the sense
that $\mu_{x}(TA)=\mu_{x}(A)$. Also, $x$ is not $D$-null, i.e.
$\mu_{x}(D)>0$. Therefore the inclusion $\bigcup_{i=1}^{\infty}T^{n_{i}}D_{i}\subseteq X\setminus D$
implies that for every $N$, 
\[
\sum_{i=1}^{N}\mu_{x}(D_{i})=\mu_{x}(\bigcup_{i=1}^{N}T^{n_{i}}D_{i})\leq1-\mu_{x}(D)
\]
Hence $\sum_{i=1}^{\infty}\mu_{x}(D_{i})\leq1-\mu_{x}(D)<1$, showing
that $x$ is $\alpha$-deficient.

All this goes to show that if a Borel system $(X,\mathcal{B},T)$
does not admit invariant probability measures then we can cover the
space by a set of the form $\deficient(\alpha)$ together with countably
many sets of the form $\divergent(A)$ and $\nul(A)$. By a standard
disjointification argument the cover can be turned into a partition
by sets of the same form, and we then can merge sets with common forms
to obtain a partition $X=\nul(A')\cup\divergent(A'')\cup\deficient(\alpha)$
(See Section \ref{sub:Generators-for-unions-of-nul-and-div-sets}).
We will show that the restriction of $T$ to each of these three invariant
sets admits a $k$-set generator for some universal constant $k$.
Then, by taking the union of these generators, we obtain a $3k$-set
generator for $T$. 

The final step of the proof is to reduce the size of the generator
from $3k$ to $2$. This uses the observation that if $T$ has no
invariant probability measures then neither does the induced map $T_{C}$
on  any sweeping-out set $C\in\mathcal{B}$ with bounded return times.
Applying the argument above gives a $3k$-set generator for $T_{C}$.
Then, by a version of the Abramov entropy formula for induced maps,
and assuming (as one may) that the first return time to $C$ takes
values that are large enough relative to $k$, one obtains a $2$-set
generator for $T$. A similar argument gives a generator whose itineraries
lie in a given mixing shift of finite type. The details of this argument
are given in Section \ref{sub:From-finite-to-2-set-generators}.

\subsection{Further remarks}

It would be quite interesting if it were enough to consider the null
or divergent cases alone. In other words, does a Borel system without
invariant probability measures always admit a set $A$ such that $X=\nul(A)$?
Or a set $B$ such that $X=\divergent(B)$? Besides simplifying the
proof of Theorem \ref{thm:main} this would give new characterizations
of such systems, and the existence of such a set $B$ would also give
an elegant converse to the ergodic theorem. We do not know whether
such sets exist, but we point out that for every non-singular measure
$\mu$ in the system (which, by assumption, is not equivalent to to
an invariant probability measure) there is a set $A$ such that $X=\nul(A)$
modulo $\mu$, and a set $B$ such that $X=\divergent(B)$ modulo
$\mu$, so by the Shelah-Weiss characterization of $\mathcal{W}$
it is plausible that our question has a positive answer.

Finally, it is very natural to ask the question about generators in
the context of more general group actions. The work of Tserunyan mentioned
earlier \cite{Tserunyan2015} is restricted by topological assumptions,
but it has the remarkable feature that it applies to actions of arbitrary
countable groups. Our argument relies on statistical properties of
orbits and entropy considerations, and we see no reason why in principle
it should not extend to countable amenable groups, but anything beyond
this will probably require substantial new ideas. We remark that Tserunyan's
proof works for actions of general countable groups, and shows that
if a finite generator does not exist, then there is a finitely additive,
finite invariant measure for the action \cite[Theorem 4.1 and Corollary 4.4]{Tserunyan2015};
the topology is used to extend this to a countably additive measure.
If these measures are not $\sigma$-additive, then, in a sense, they
are deficient, and perhaps this could be ruled out using some coding
procedure similar to ours to show that deficiency implies a finite
generator. However, the coding would need to be done without access
to the machinery of F\o{}lner sets, empirical frequencies etc., so
in fact quite a different methods would be required. 
\begin{acknowledgement*}
I would like to thank A. Kechris and the anonymous referee for pointing
out that there is no need to exclude wandering sets in Theorem \ref{thm:main}.
I am also grateful to the referee for a very perceptive and careful
reading of the paper, which has led to a much improved manuscript. 
\end{acknowledgement*}

\section{\label{sec:Notation-and-conventions}Notation and conventions}

A standard Borel space is a measurable space arising from a complete
separable metric space and its Borel $\sigma$-algebra. An automorphism
of a measure space is a measurable injection with measurable inverse
(for standard Borel spaces measurability of the inverse is automatic).
A Borel system $(X,\mathcal{B},T)$ consists of a standard Borel space
$(X,\mathcal{B})$ and a Borel automorphism $T$ of it. Given a family
of sets $\alpha\subseteq\mathcal{B}$ we write $\sigma(\alpha)\subseteq\mathcal{B}$
for the $\sigma$-algebra generated by $\alpha$, and $\sigma_{T}(\alpha)=\sigma(\bigcup_{n\in\mathbb{Z}}T^{n}\alpha)$
for the smallest $T$-invariant $\sigma$-algebra containing $\alpha$.
Similarly for a measurable map $f$ defined on $X$ we write $\sigma(f)$
for the smallest $\sigma$-algebra with respect to which $f$ is measurable
and $\sigma_{T}(f)$ for the smallest such $T$-invariant $\sigma$-algebra.
A factor map from a Borel system $(X,\mathcal{B},T)$ to a Borel system
$(Y,\mathcal{C},S)$ is a map $\pi:X\rightarrow Y$ such that $\pi$
is equivariant: $S\pi=\pi T$. Note that the map need not be onto,
and the image need not be measurable (which is why we emphasize factor
maps rather than factors). Such a map gives rise to a $T$-invariant
sub-$\sigma$-algebra by pulling back $\mathcal{C}$ through $\pi$.

For a finite or countable alphabet $\Sigma$ we write $\Sigma^{n}$
for the set of words of length $n$ over $\Sigma$, i.e. sequences
$w=w_{1}\ldots w_{n}$ with symbols from $\Sigma$. We write $\Sigma^{*}=\bigcup_{n=0}^{\infty}\Sigma^{n}$.
A word $a=\Sigma^{n}$ appears in $b\in\Sigma^{*}$ if there is an
index $i$ such that $b_{i}b_{i+1}\ldots b_{i+n-1}=a$. We then say
that $a$ appears in $b$ at $i$ or that there is an occurrence of
$a$ in $b$ at $i$. We also say that $a$ is a subword of $b$.

By intervals we mean integer intervals, so $[u,v]=\{i\in\mathbb{Z}\,:\:u\leq i\leq v\}$
(and similarly for half-open intervals and intervals that are unbounded
on one or two sides). Given $a\in\Sigma^{*}$ and an interval $[u,v]$
such that $a_{i}$ is defined for $i\in[u,v]$, the subword of $a$
on $[u,v]$ is $a|_{[u,v]}=a_{u}a_{u+1}\ldots a_{v}$. We denote concatenation
of words $a\in\Sigma^{m}$, $b\in\Sigma^{n}$ by $ab=a_{1}\ldots a_{m}b_{1}\ldots b_{n}$.
We write $a^{n}$ for the $n$-fold self concatenation of a symbol
or word $a$. 

For a countable set $\Sigma$ we frequently work in the space $\Sigma^{\mathbb{Z}}$
of bi-infinite sequences over $\Sigma$ and less frequently in $\Sigma^{\mathbb{N}}$,
the space of one-sided sequences. The notation and terminology used
for finite sequences generalizes to infinite sequences where appropriate.
By taking the discrete topology on $\Sigma$ and the product topology
on the product spaces we find that $\Sigma^{\mathbb{N}}$ and $\Sigma^{\mathbb{Z}}$
are separable metrizable spaces, and compact when $\Sigma$ is finite.
In particular they carry the Borel $\sigma$-algebra and together
with it form standard Borel spaces. The shift maps $S:\Sigma^{\mathbb{N}}\rightarrow\Sigma^{\mathbb{N}}$
and $S:\Sigma^{\mathbb{Z}}\rightarrow\Sigma^{\mathbb{Z}}$ is defined
by
\[
(Sx)_{i}=x_{i+1}
\]
$S$ is onto and with respect to the product topology it is continuous,
and hence measurable. It is a bijection of $\Sigma^{\mathbb{Z}}$.
For simplicity we use the same letter $S$ to denote shifts on sequence
spaces over different alphabets and different index sets ($\mathbb{N}$
or $\mathbb{Z}$).

We have already defined the frequency $s(x,A)$ of visits of the orbit
of $x\in X$ to $A\subseteq X$, including upper and lower versions.
We introduce similar notation in the symbolic setting and for subsets
of $\mathbb{Z}$. For $x\in\Sigma^{\mathbb{Z}}$ and $a\in\Sigma^{*}$
let
\[
S_{N}(x,a)=\frac{1}{N}\#\{0\leq i<N\,:\,a\mbox{ appears in }x\mbox{ at }i\}
\]
and define the upper and lower frequencies of $a$ in $x$ by
\begin{eqnarray*}
\overline{s}(x,a) & = & \limsup_{N\rightarrow\infty}S_{N}(x,a)\\
\underline{s}(x,a) & = & \liminf_{N\rightarrow\infty}S_{N}(x,a)
\end{eqnarray*}
If the two agree their common value is denoted $s(x,a)$ and called
the frequency of $a$ in $x$. 

The upper and lower densities of a subset $I\subseteq\mathbb{Z}$
is defined in the same way: take
\[
S_{N}(I)=\frac{1}{N}|I\cap[0,N-1]|
\]
and
\begin{eqnarray*}
\overline{s}(I) & = & \limsup_{N\rightarrow\infty}S_{N}(I)\\
\underline{s}(I) & = & \liminf_{N\rightarrow\infty}S_{N}(I)
\end{eqnarray*}
The common value, if it exists, is denotes $s(I)$ and called the
density of $I$. Note that this is the same as the frequency of $1$
in $1_{I}$.

We will also need to use uniform densities. The version we need is
the two-sided one. For $I\subseteq\mathbb{Z}$, the upper and lower
uniform densities of $I\subseteq\mathbb{Z}$ are
\begin{eqnarray*}
\overline{s}^{*}(I) & = & \limsup_{N\rightarrow\infty}\left(\sup_{n\in\mathbb{Z}}\frac{1}{N}|I\cap[n,n+N-1]\right)\\
\underline{s}^{*}(I) & = & \liminf_{N\rightarrow\infty}\left(\inf_{n\in\mathbb{Z}}\frac{1}{N}|I\cap[n,n+N-1]\right)
\end{eqnarray*}
We write $s^{*}(I)$ for the common value if they coincide, and call
it the uniform density of $I$. In a Borel system $(X,T)$ and $x\in X$,
$A\subseteq X$, we write $\overline{s}^{*}(x,A)$ for $\overline{s}^{*}(\{i\,:\,T^{i}x\in A\})$,
and similarly $\underline{s}^{*}(x,A)$.

Finally, we note that obvious fact that 
\[
\underline{s}^{*}(I)\leq\underline{s}(I)\leq\overline{s}(I)\leq\overline{s}^{*}(I)
\]
and that the set-functions $\overline{s}$ and $\overline{s}^{*}$
are sub-additive.

\section{\label{sec:Preliminary-constructions}Preliminary constructions}

In this section we establish some basic machinery for manipulating
orbits. We first prove some technical results that reformulate our
problem in symbolic terms, and establish a marker lemma. We then show
how to manipulate subsets of an orbit in a stationary and measurable
manner. One result will say that if $A$ is a subset of an orbit with
density $\alpha$ and $\beta<\alpha$ then we can select a subset
$B$ of $A$ whose density is approximately $\beta$. Another allows
us to construct an injection between subsets $C,D$ of an orbit, assuming
that the density of $C$ is less than that of $D$. These are rather
elementary observations but will play an important role in our coding
arguments, since they allow to ``move data around'' inside an orbit.
We also prove some other auxiliary results of a technical nature.

\subsection{\label{sub:Reformulation-in-terms-of-symbolic-factor-maps}Factor
maps and generators}

A\emph{ factor map }from a Borel system into $\Sigma^{\mathbb{Z}}$
for a finite set $\Sigma$ is called a symbolic factor map. Given
a finite or countable partition $\alpha=\{A_{i}\}_{i\in\Sigma}$ of
a Borel system $(X,\mathcal{B},T)$, write $\alpha(x)=i$ if $x\in A_{i}$,
and define $\alpha_{*}:X\rightarrow\Sigma^{\mathbb{Z}}$ by $\alpha_{*}(x)_{n}=\alpha(T^{n}x)$.
This is a measurable equivariant map, and defines a symbolic factor
map if $\alpha$ is finite.

The problem of finding a finite generator is equivalent to finding
an injective symbolic factor map. To see the equivalence, note that
if $\alpha=\{A_{1},\ldots,A_{r}\}$ is a finite generator then the
itinerary map $\alpha_{*}$ is a symbolic factor map and injective.
Conversely, if $\pi:X\rightarrow\Delta^{\mathbb{Z}}$ is an invective
symbolic factor map, then the partition $\{[i]\}_{i\in\Delta}$ is
a finite generator for $(\Delta^{\mathbb{Z}},S)$, and equivariance
of the factor map implies that $\alpha=\{\pi^{-1}[i]\}_{i\in\Delta}$
is a generator for $X$.

\subsection{\label{sub:The-space-of-subsets-of-Z}The space $2^{\mathbb{Z}}$}

Let $2^{\mathbb{Z}}$ denote the set of all subsets of $\mathbb{Z}$.
We identify each $I\subseteq\mathbb{Z}$ with its indicator sequence
$1_{I}\in\{0,1\}^{\mathbb{Z}}$, where
\[
1_{I}(n)=\left\{ \begin{array}{cc}
1 & \mbox{if }n\in I\\
0 & \mbox{otherwise}
\end{array}\right.
\]
In this way $2^{\mathbb{Z}}$ inherits both a structure and the shift
map. We shall apply the shift directly to subsets of $\mathbb{Z}$
and note that it is given by 
\[
SI=I-1=\{i\in\mathbb{Z}\,:\,i+1\in I\}
\]
Also, given a Borel system $(X,\mathcal{B},T)$, we can speak of measurable
and equivariant $X\rightarrow2^{\mathbb{Z}}$, specifically, $I:X\rightarrow2^{\mathbb{Z}}$
is equivariant if $I(Tx)=SI(x)$.

\subsection{\label{sub:Aperiodic-sequences-and-markers}Aperiodic sequences and
a marker lemma}

Let $\Sigma$ be a countable alphabet, and write
\[
\Sigma_{AP}^{\mathbb{Z}}=\{x\in\Sigma^{\mathbb{Z}}\,:\,x\mbox{ is not periodic}\}
\]
This is an invariant Borel set. In this section and those that follow
we construct various factor maps whose domain involves $\Sigma_{AP}^{\mathbb{Z}}$.
We note that, instead, one could take any aperiodic Borel system $(X,T)$.
Indeed, by Weiss's countable generator theorem \cite{Weiss1989} (strengthened
so as not to exclude a $\mathcal{W}$-set using \cite[Corollary 7.6]{Tserunyan2015}),
one can embed $(X,T)$ in $(\Sigma_{AP}^{\mathbb{Z}},S)$.
\begin{lem}
\label{lem:low-frequency-words-exist}For every $x\in\Sigma_{AP}^{\mathbb{Z}}$
and every $\varepsilon>0$ there is a block $a\in\Sigma^{*}$ that
occurs in $x$ and satisfies $\underline{s}(x,a)<\varepsilon$. \end{lem}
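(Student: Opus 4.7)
The plan is to argue by contradiction: suppose every block $a \in \Sigma^{*}$ that occurs in $x$ has $\underline{s}(x,a) \geq \varepsilon$, and then deduce that $x$ must be periodic, contradicting $x \in \Sigma_{AP}^{\mathbb{Z}}$.

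The first step is a simple bookkeeping identity. For any fixed length $n$ and any $N \geq 1$, every index $i \in [0,N-1]$ is the starting position of exactly one length-$n$ subword of $x$, so
\[
\sum_{a \in \Sigma^{n}} S_{N}(x,a) \;=\; 1.
\]
Taking $\liminf_{N \to \infty}$ and applying Fatou's lemma for counting measure yields $\sum_{a \in \Sigma^{n}} \underline{s}(x,a) \leq 1$. Under the contrary assumption each length-$n$ subword appearing in $x$ contributes at least $\varepsilon$ to the sum, so the complexity $p_{x}(n)$, defined as the number of distinct length-$n$ subwords of $x$, satisfies $p_{x}(n) \leq 1/\varepsilon$ for every $n$.

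The second step is a Morse--Hedlund style argument to extract periodicity from bounded complexity. Since $p_{x}$ is non-decreasing and bounded above by $K := \lfloor 1/\varepsilon \rfloor$, there is some $n_{0}$ with $p_{x}(n_{0}) = p_{x}(n_{0}+1) =: K_{0}$. Each length-$(n_{0}+1)$ subword of $x$ has a unique length-$n_{0}$ prefix (which is itself a subword of $x$), so the double-counting identity $\sum_{a} |E(a)| = K_{0}$ (where $E(a)$ is the set of right-extensions of $a$ appearing in $x$) combined with $|E(a)| \geq 1$ for each of the $K_{0}$ subwords $a$ forces $|E(a)| = 1$: every length-$n_{0}$ subword of $x$ extends uniquely to the right. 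The symmetric argument gives unique left-extension. Thus the map $\phi$ on the $K_{0}$-element set of length-$n_{0}$ subwords sending each $a$ to the length-$n_{0}$ suffix of its unique right extension is a bijection.

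Now observe that along the orbit of $x$ we have $x|_{[i+1,\, i+n_{0}]} = \phi\bigl(x|_{[i,\, i+n_{0}-1]}\bigr)$ for every $i \in \mathbb{Z}$. Since $\phi$ is a bijection of a finite set, the sequence $i \mapsto x|_{[i,\, i+n_{0}-1]}$ is periodic in $i$ with some period $p \geq 1$. Reading off the first coordinate gives $x_{i+p} = x_{i}$ for all $i \in \mathbb{Z}$, so $x$ is periodic, contradicting $x \in \Sigma_{AP}^{\mathbb{Z}}$. The main (minor) obstacle is just verifying carefully that bounded complexity genuinely upgrades to a two-sided period, rather than mere eventual periodicity in each direction; the bijectivity of $\phi$ handles this cleanly.
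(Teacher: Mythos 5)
Your proof is correct and takes essentially the same route as the paper's: both rest on the bound $\sum_{a\in\Sigma^{n}}\underline{s}(x,a)\leq 1$ together with the Morse--Hedlund fact that a bi-infinite sequence of bounded word complexity is periodic. The only difference is that the paper invokes that fact as well known, whereas you (correctly) prove it from scratch via the equal-complexity / unique-extension / bijection argument, which in particular handles the passage from bounded complexity to a genuine two-sided period.
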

\begin{proof}
For a finite or infinite word $y$ let $L_{n}(y)$ denote the set
of words of length $n$ appearing in $y$ and $N_{n}(y)=|L_{n}(y)|$
their number. It is well known that $x$ is periodic if and only if
$\sup_{n}N_{n}(x)<\infty$, so by assumption there is an $n$ such
that $N_{n}(x)>1/\varepsilon$. If for this $n$ we had $\underline{s}(x,a)\geq\varepsilon$
for all $a\in L_{n}(x)$ then we would arrive at a contradiction,
since
\[
1\geq\sum_{a\in L_{n}(x^{+})}\underline{s}(x,a)\geq N_{n}(x^{+})\cdot\varepsilon>1
\]
Hence there is $a\in L_{n}(x)$ such that $\underline{s}(x,a)<\varepsilon$.
\end{proof}
Note that a word $a\in\Sigma^{*}$ as in the lemma can be chosen measurably
from $x\in\Sigma_{AP}^{\mathbb{Z}}$ and in a manner that is constant
over $S$-orbits, since one can simply choose the lexicographically
least word satisfying the conclusion. Also note that the hypothesis
of the lemma holds automatically if $x\in\Sigma^{\mathbb{Z}}$ contains
infinitely many distinct symbols.

We say that $I\subseteq\mathbb{Z}$ is $N$-separated if $|j-i|\geq N$
for all distinct $i,j\in I$, and that it is $N$-dense if every interval
$[i,i+N-1]$ intersects $I$ non-trivially. Equivalently, the gap
between consecutive elements is no larger than $N$. We say that $z\in\{0,1\}^{\mathbb{Z}}$
is $N$-separated or $N$-dense if $z=1_{I}$ for an $N$-separated
or $N$-dense set $I$, respectively. We say that $z$ is an $N$-marker
if it is $N$-separated and $(N+1)$-dense. More concretely, this
means that the distance between consecutive $1$s is $N$ or $N+1$.

We require the following version of the Alpern-Rohlin lemma \cite{Alpern1981},
which we state in symbolic terms. 
\begin{lem}
For every $N\in\mathbb{N}$ there is a factor map $\Sigma_{AP}^{\mathbb{Z}}\rightarrow\{0,1\}^{\mathbb{Z}}$
whose image is contained in the $N$-markers.\end{lem}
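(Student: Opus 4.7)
The plan is to construct the factor map in two stages. First, produce an equivariant Borel map $x \mapsto J(x) \subseteq \mathbb{Z}$ whose values are sets that are infinite in both directions and whose consecutive gaps all lie in $[M, K]$ for some fixed $M \geq N(N-1)$ and some finite $K = K(x)$. Second, canonically subdivide each gap into pieces of lengths $N$ and $N+1$ to obtain a factor map into the $N$-markers.

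For the first stage, I would apply Lemma \ref{lem:low-frequency-words-exist} to pick, in an orbit-constant Borel manner, a word $a(x) \in \Sigma^{*}$ that occurs in $x$ and satisfies $\underline{s}(x, a(x)) < 1/(2M)$; concretely, take the lexicographically least such word of minimal length. Let $I(x) = \{i \in \mathbb{Z} : x_{i} x_{i+1} \cdots x_{i+|a(x)|-1} = a(x)\}$ be the position set of its occurrences; this defines an equivariant Borel map $I : \Sigma_{AP}^{\mathbb{Z}} \to 2^{\mathbb{Z}}$. Then thin $I(x)$ to an $M$-separated subset $J(x)$ by keeping only those $i \in I(x)$ with $(i-M, i) \cap I(x) = \emptyset$, i.e. the left endpoint of each $M$-cluster. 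The density bound on $a(x)$ forces enough large gaps in $I(x)$ that $J(x)$ is infinite, while $M$-separation holds by construction. The subtle point is that $a(x)$ might occur only finitely often in one or both directions (e.g.\ when $x$ is eventually periodic on one side, which is permitted in $\Sigma_{AP}^{\mathbb{Z}}$). In that case I would apply the lemma separately to each tail, or in any eventually periodic tail invoke the period structure itself to produce a canonical equivariant marker (mark every $M$-th position along the periodic pattern, with phase chosen lexicographically), gluing with the aperiodic zone across the finite transition region.

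For the second stage, recall that since $\gcd(N, N+1) = 1$, the Chicken McNugget / Frobenius theorem gives that every integer $g \geq N(N-1)$ admits a representation $g = pN + q(N+1)$ with $p, q \geq 0$; choose the canonical one with $0 \leq q < N$. Between two consecutive elements $j < j'$ of $J(x)$ with $j' - j = g$, insert new markers at $j, j+N, j+2N, \ldots, j+pN, j+pN+(N+1), \ldots, j'-(N+1), j'$. The resulting set has all gaps equal to $N$ or $N+1$, and $\pi(x)$ defined as its indicator is equivariant and Borel, hence a factor map $\Sigma_{AP}^{\mathbb{Z}} \to \{0,1\}^{\mathbb{Z}}$ whose image is contained in the $N$-markers. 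The main obstacle is the first stage: guaranteeing that $J(x)$ is two-sidedly infinite uniformly in $x \in \Sigma_{AP}^{\mathbb{Z}}$, particularly for one-sidedly eventually periodic sequences, where one must glue a Lemma \ref{lem:low-frequency-words-exist}-derived marker in the aperiodic tail with a period-based marker in the periodic tail, equivariantly and in a Borel fashion.
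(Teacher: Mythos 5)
Your two-stage plan is exactly the paper's: pick a low-frequency word via Lemma \ref{lem:low-frequency-words-exist}, thin its occurrence set to an $M$-separated set, then fill each gap with blocks of lengths $N$ and $N+1$ using the Frobenius representation. The second stage is fine as written. The problem is the one you yourself flag as ``the main obstacle'': your thinned set $J(x)$ need not be two-sidedly infinite, and your proposed repair does not close the gap. Note also that the failure mode is broader than eventual periodicity of a tail: the lemma only controls the \emph{forward} lower density, so even for a fully aperiodic $x$ the occurrence set $I(x)$ can be bounded above, bounded below, or unbounded below but with all backward gaps smaller than $M$ past some point (so that $J(x)$ is bounded below). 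Your suggested fixes --- applying the word lemma ``separately to each tail'' (the lemma gives no low-frequency word inside a periodic tail), or marking ``every $M$-th position along the periodic pattern with phase chosen lexicographically'' --- are not worked out, and the phase-selection step in particular is delicate: an equivariant choice of one coset of an arithmetic progression needs a canonical anchor, which you never specify.

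The paper's resolution is much simpler and you should adopt it: after thinning, if the separated set $I'$ has a least element $i_{0}$, adjoin the points $i_{0}-kM$ for $k=1,2,3,\ldots$; if it has a greatest element $i_{1}$, adjoin $i_{1}+kM$. The extreme elements are canonically determined by $x$ (they shift with $x$), so this extension is measurable and equivariant, it preserves $M$-separation, and it makes the set unbounded in both directions in every case at once --- no case analysis on periodic tails, no gluing across a transition region. With that one modification your argument goes through and coincides with the paper's proof.
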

\begin{proof}
Fix $N$ and $x\in\Sigma_{AP}^{\mathbb{Z}}$. Choose $a\in\Sigma^{*}$
which occurs in $x$ but $\underline{s}(x,a)<1/N^{2}$. Let
\[
I=\{i\in\mathbb{Z}\,:\,a\mbox{ appears in }x\mbox{ at }i\}
\]
Then $I$ is non-empty and $\underline{s}(I)<1/N^{2}$. Therefore
the set 
\[
I'=\{i\in I\,:\,(i,i+N^{2})\cap I=\emptyset\}
\]
is non-empty and $N^{2}$-separated. If $I'$ has a least element
$i_{0}$ add to $I'$ the numbers $i_{0}-kN^{2}$ for $k=1,2,3\ldots$,
and if $I'$ has a maximal element $i_{1}$ add to $I'$ the numbers
$i_{1}+kN^{2}$, $k=1,2,3,\ldots$. The resulting set $I''$ is now
unbounded above and below and still $N^{2}$-separated. Finally, for
each consecutive pair $u<v$ in $I''$, let $L=v-u$ so $L\geq N^{2}$.
There is a (unique) representation $L=mN+n(N+1)$ with $m,n\in\mathbb{N}$.
Now add to $I''$ all the numbers of the form $u+m'N+n'(N+1)$ for
$0<m'\leq m$ and $0<n'\leq n$. Doing this for every consecutive
pair $u,v\in I''$, we obtain a set $I'''$ which is measurably determined
by $x$, is $N$-separated and $(N+1)$-dense. Set  $\pi(x)=1_{I'''}$.
This is the desired map.
\end{proof}
The proposition above might produce a periodic factor; the next one
ensures that the image is aperiodic, i.e. it takes an aperiodic sequence
on a countable alphabet, and ``reduces'' the number of symbols to
two, preserving aperiodicity. This will be used when we construct
symbolic factors to ensure that the factors are themselves aperiodic.
The proposition may be viewed as a baby version of the generator theorem:
it gives a symbolic factor map, which, while not injective, at least
preserves the aperiodicity of points in the domain. It appears in
a more general setting in \cite[Theorem 8.7]{Tserunyan2015}
\begin{prop}
\label{prop:two-symbol-AP-factor}For every $N\in\mathbb{N}$ there
is a factor map $\pi:\Sigma_{AP}^{\mathbb{Z}}\rightarrow\{0,1\}_{AP}^{\mathbb{Z}}$
whose image is contained in the aperiodic $N$-markers.\end{prop}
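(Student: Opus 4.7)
The plan is to build on the previous lemma by first producing an $M$-marker for $M$ large, and then filling each cell of that marker with an $N$-marker pattern that encodes data from $x$. The encoded data will be chosen so that any periodicity of the output forces periodicity of $x$, contradicting $x \in \Sigma_{AP}^{\mathbb{Z}}$.

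I would begin by applying the previous lemma with a parameter $M$ chosen large enough (say $M \geq 10 N^2$) that for each length $L \in \{M, M+1\}$ there are many valid $N$-marker decompositions $L = aN + b(N+1)$ together with many orderings of the $N$'s and $(N{+}1)$'s. The resulting $M$-marker $w = \pi_0(x)$ has $1$'s at some $\{u_j\}_{j\in\mathbb{Z}}$ with $u_{j+1}-u_j \in \{M,M+1\}$. In each cell $[u_j, u_{j+1}]$ I would insert an $N$-marker pattern comprising two parts: a fixed synchronization gap-pattern $P^{*} \in \{N, N+1\}^{\ell}$ at the start of the cell, serving as a recognizable boundary marker, followed by a payload subpattern that encodes a portion of $x$ and is constrained to avoid $P^{*}$ as a subword. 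A prefix code on the countable alphabet $\Sigma$ will allow the payload, possibly spanning several consecutive cells joined by a secondary continuation marker, to represent $x$-values at and near $u_j$.

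The resulting equivariant map $\pi: x \mapsto y$ has image in the $N$-markers by construction, so the main point is aperiodicity. I would argue by contradiction: if $S^p y = y$ for some $p > 0$, the synchronization pattern $P^{*}$ pinpoints the cell boundaries inside $y$, so the set $\{u_j\}$ is $S^p$-invariant and $S^p w = w$. Periodicity of $y$ then makes the payloads $S^p$-periodic in $j$, which decodes back to $S^p$-periodicity of the encoded $x$-values. By arranging the encoding so that every integer coordinate appears in some payload located at bounded distance, $S^p$-periodicity of the encoded data will force $S^p x = x$, contradicting aperiodicity.

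The principal obstacle will be combinatorial: choosing $P^{*}$ rare enough that it never appears inside a payload, while simultaneously leaving enough admissible payload patterns to carry a faithful encoding of arbitrarily complex symbols from the countably infinite $\Sigma$. I expect this to reduce to checking that, for $M$ sufficiently large, the number of $N$-marker patterns in a cell of length $L \in \{M, M+1\}$ avoiding $P^{*}$ as a subword grows exponentially in $M$, so that a variable-length prefix code together with the continuation marker can encode any symbol in bounded average cell count. Once these parameters are tuned, the two requirements (recoverability of cell boundaries and sufficient encoding capacity) should be simultaneously satisfiable with only minor bookkeeping.
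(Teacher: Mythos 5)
Your approach has a genuine gap, and it is not the route the paper takes. The crux of your aperiodicity argument is that $S^p$-periodicity of the payloads should ``decode back to $S^p$-periodicity of the encoded $x$-values'' and hence force $S^px=x$. For that implication you need the payload data to determine $x$ locally, i.e.\ you need your map $x\mapsto y$ to be an equivariant \emph{injection} of $\Sigma_{AP}^{\mathbb{Z}}$ into a finite-alphabet shift. But such a map cannot exist: it would be a finite generator for all of $\Sigma_{AP}^{\mathbb{Z}}$, and restricting to any shift-invariant measure of entropy exceeding the logarithm of the output alphabet size contradicts the elementary entropy bound for finite generators. (Indeed, constructing finite generators in the absence of such measures is the content of the entire paper, so this step cannot be dispatched as ``minor bookkeeping.'') Concretely, the obstruction already appears in your own setup: a prefix code on the countably infinite alphabet $\Sigma$ must have unbounded codeword lengths, while each cell carries only $O(M)$ bits, so for a point $x$ whose symbols have rapidly growing indices the cumulative code length outpaces the linearly growing capacity, and the requirement that ``every integer coordinate appears in some payload located at bounded distance'' fails. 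There is no invariant measure available against which ``bounded average cell count'' could be interpreted. If you encode only a bounded amount of information per cell (say a truncation $\pi_n(x)$ of the alphabet), then periodicity of $y$ no longer forces periodicity of $x$, and the argument collapses.

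The paper avoids encoding $x$ altogether. It iterates the low-frequency-word marker lemma at a rapidly increasing sequence of scales $N_{n+1}=p_n!+p_n$, intersecting the resulting marker sets $I_n$ (suitably shifted); if some $I_n$ is aperiodic it stops, and otherwise it defines $\pi(x)_i$ by the \emph{parity} of $\#\{k : i\in I_k\}$ and shows by a least-period argument that this diagonal construction cannot be periodic. Aperiodicity is thus obtained purely from the combinatorics of nested periodic marker sets with factorially growing periods, not from recovering $x$ from the output. The final conversion to an exact $N$-marker (replacing each gap $10^m1$ by $1(0^{N-1}1)^{k_1}(0^N1)^{k_2}$) is the only place where your cell-filling idea has a counterpart. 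To repair your proof you would need to replace the faithful-encoding step with some mechanism of this diagonal type that defeats every finite period without carrying the full information of $x$.
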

\begin{proof}
Fix $x\in\Sigma_{AP}^{\mathbb{Z}}$. We construct inductively a decreasing
sequence of sets $I_{n}\subseteq\mathbb{Z}$ with $I_{n}$ periodic
of period $p_{n}$, and $p_{n+1}\geq p_{n}!+p_{n}$. Start by applying
the previous lemma to $x$ and $N_{1}=N$ to obtain an $N_{1}$-marker
and let $I_{1}\subseteq\mathbb{Z}$ denote the sequence of indices
where this marker is $1$. Then $I_{1}$ is $N$-separated. If it
is aperiodic set $\pi(x)=1_{I_{1}}$. Otherwise, denote its period
by $p_{1}$ and note that $p_{1}\geq N_{1}=N$.

Assume that after $n$ steps we have constructed $I_{1}\supseteq I_{n}\supseteq\ldots\supseteq I_{n}$
and that $I_{n}$ is periodic with period $p_{n}$. Apply the previous
lemma to $x$ and $N_{n+1}=p_{n}!+p_{n}$, to obtain an $N_{n+1}$-marker,
and let $I'_{n+1}$ denote the positions of the $1$s in it, so the
gaps in $I'_{n+1}$ are of length at least $N_{n+1}\geq N$. Let $k=\min\{i\in\mathbb{N}\,:\,S^{i}I'_{n+1}\cap I_{n}\neq\emptyset\}$
and set
\[
I_{n+1}=S^{k}I'_{n+1}\cap I_{n}
\]
If $I_{n+1}$ is aperiodic, define $\pi(x)=1_{I_{n+1}}$. Otherwise
continue the induction. 

Suppose we did not stop at a finite stage of the construction. First,
we claim that $p_{n}\rightarrow\infty$. To see this, note the gaps
in $I_{n+1}$ are of size at least $2p_{n}$, so, since it is periodic,
its least period $p_{n+1}$ greater than $p_{n}$.

Observe that there is at most one $i\in\mathbb{Z}$ contained in infinitely
many (equivalently all)  of the $I_{n}$'s, because the gaps in $I_{n}$
tend to infinity. Define $\pi(x)$ by setting $\pi(x)_{i}=1$ if $i$
is in infinitely many $I_{n}$ and for any other $i$ set 
\begin{eqnarray*}
\pi(x)_{i} & = & \max\{n\,:\,i\in I_{n}\}\bmod2\\
 & = & \#\{k\,:\,i\in I_{k}\}\bmod2
\end{eqnarray*}
It is clear that $x\mapsto\pi(x)$ is measurable and equivariant. 

We claim that $\pi(x)$ is aperiodic. Indeed, suppose it was periodic
with least period $q$. Choose $n$ such that $p_{n}>q$ and define
$y\in\{0,1\}^{\mathbb{Z}}$ by 
\[
y_{i}=\max\{m\leq n\,:\,i\in I_{m}\}\bmod2
\]
Clearly $y$ is periodic with period at most $p'=\lcm_{k\leq n}p_{k}\geq p_{n}$.
Also, $\pi(x)$ and $y$ agree everywhere except, possibly, on $I_{n+1}$.
But the gaps in $I_{n+1}$ are at least $p_{n}+p'$, and in these
gaps $\pi(x)$ and $y$ agree, so there is a $j$ such that $\pi(x)$
and $y$ agree on $[j,j+p'+q]$. But then for $i\in[j,j+p-1]$ we
have $y_{i}=y_{i+q}$, and since $y$ is $p'$-periodic this means
that $y$ is $q$-periodic, a contradiction.

We note that $I_{n}\setminus\bigcup_{k>n}I_{k}$ is infinite and unbounded
above and below for each $n$, from which it follows easily that $\pi(x)$
contains infinitely many $1$s in both directions. 

The sequences $\pi(x)$ are $N$-separated, aperiodic and contains
infinitely many $1$ in each direction, but the gaps can still be
large. To get $N$-markers, begin with $2N^{2}$ instead of $N$.
Then replace each block $10^{m}1$ in $\pi(x)$ with a sequence of
the form $1(0^{N-1}1)^{k_{1}}(0^{N}1)^{k_{2}}$, where $k_{1},k_{2}\geq1$
and $k_{2}$ is chosen to be minimal. Since $m\geq2N^{2}$ there exists
such a choice of $k_{1},k_{2}$. The original location of $1$s is
the location of the central $1$s in the sequences $10^{N}10^{N-1}1$,
so the new sequence is aperiodic, and is clearly a measurable equivariant
function of $x$, as desired.
\end{proof}

\subsection{\label{sub:Stationary-selection}Stationary selection}

In this section we show that one can select a subset of given approximate
density from a set of higher density in a shift-invariant manner.
Denote
\[
[0,1]_{<}^{2}=\{(t_{1},t_{2})\in[0,1]^{2}\,:\,t_{1}<t_{2}\}
\]

\begin{lem}
\label{lem:selecting-a-subset}There is a measurable map $\Sigma_{AP}^{\mathbb{Z}}\times2^{\mathbb{Z}}\times[0,1]_{<}^{2}\rightarrow2^{\mathbb{Z}}$
that assigns to each $y\in\Sigma_{AP}^{\mathbb{Z}}$, $I\subseteq\mathbb{Z}$
and $t_{1}<t_{2}$ a subset $J\subseteq I$ in a manner that is equivariant
in the sense that $(Sy,SI,t_{1},t_{2})\mapsto SJ$, and which satisfies
$\underline{s}(J)\geq t_{1}\underline{s}(I)$ and $\underline{s}(I\setminus J)\geq(1-t_{2})\underline{s}(I)$,
and similarly for upper densities.\end{lem}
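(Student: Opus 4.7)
The strategy is to partition $\mathbb{Z}$ equivariantly from $y$ into intervals each containing at least $C := \lceil 2/(t_2-t_1) \rceil$ elements of $I$, and within each such cell $B$ select an ``evenly spread'' $t$-fraction of $I \cap B$ for $J$, where $t := (t_1+t_2)/2$. Concretely, enumerate $I \cap B = \{i_1 < \cdots < i_m\}$ and set $J \cap B = \{i_j : \lfloor tj \rfloor > \lfloor t(j-1) \rfloor\}$; this selects exactly $\lfloor tm \rfloor$ elements, and crucially every prefix satisfies $|J \cap \{i_1, \ldots, i_k\}| = \lfloor tk \rfloor$, so the selection responds gracefully to truncation. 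The key arithmetic is that $m \ge C$ forces $(t-t_1)m \ge 1$ and $(t_2 - t)m \ge 1$, whence $\lfloor tm \rfloor \in [t_1 m, t_2 m]$, giving $|J \cap B| \in [t_1 m_B, t_2 m_B]$ and $|(I \setminus J) \cap B| \in [(1-t_2) m_B, (1-t_1) m_B]$ in each cell.

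To construct the partition, apply the marker construction (following Lemma~\ref{lem:low-frequency-words-exist}) to $y$ to obtain an $N$-marker $M \subseteq \mathbb{Z}$ for some fixed $N$, measurable and equivariant in $y$. Refine the $M$-block partition by absorbing each \emph{pending} block (one with $|I \cap B| < C$) into its right neighbor, iterated until every cell contains at least $C$ elements of $I$. Because ``pending'' and ``right neighbor'' are intrinsic notions, the resulting partition $\mathcal{P}$ is shift-equivariant and measurable; in the degenerate case where the absorption fails to terminate (precisely when $I$ has bounded right-support), we have $\overline{s}(I) = 0$ and the lemma's inequalities are trivially satisfied by any choice of $J \subseteq I$ in that region.

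The density estimates then follow by summation. For $N_0 \to \infty$, letting $B_L, B_R$ be the boundary cells meeting $[0, N_0-1]$ and $m_R^{\mathrm{in}} := |I \cap B_R \cap [0, N_0-1]|$, the prefix property of the evenly-spread selection gives $|J \cap B_R \cap [0, N_0-1]| = \lfloor t m_R^{\mathrm{in}} \rfloor \ge t_1 m_R^{\mathrm{in}} - 1$, and an analogous $O(1)$ bound holds at $B_L$. Summing the interior contributions $|J \cap B| \ge t_1 m_B$,
\[
|J \cap [0, N_0-1]| \ge t_1 \, |I \cap [0, N_0-1]| - O(1),
\]
so dividing by $N_0$ and taking $\liminf$ yields $\underline{s}(J) \ge t_1 \underline{s}(I)$. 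The complementary bound $\underline{s}(I \setminus J) \ge (1 - t_2) \underline{s}(I)$ follows in the same way from $|(I \setminus J) \cap B_R \cap [0, N_0-1]| = m_R^{\mathrm{in}} - \lfloor t m_R^{\mathrm{in}} \rfloor \ge (1-t_2) m_R^{\mathrm{in}}$ and $|(I \setminus J) \cap B| \ge (1-t_2) m_B$ in each interior cell, and the upper-density inequalities are identical with $\limsup$ in place of $\liminf$.

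The main obstacle is the combinatorial bookkeeping needed to verify that the right-absorption procedure produces a genuine shift-equivariant, measurable partition of $\mathbb{Z}$ (handling the termination of the merging as an intrinsic condition depending only on $y$ and $I$), and to confirm the $O(1)$ boundary estimates uniformly in $N_0$. The enabling feature throughout is the prefix-compatibility of the evenly-spread selection, which prevents boundary errors from scaling with the $I$-mass of the incomplete cell $B_R$---this is the replacement for the naive ``first $\lfloor tm\rfloor$ elements'' scheme, which would give uncontrolled boundary deficits whenever operating cells accumulate large pending runs.
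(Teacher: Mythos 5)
Your scheme is a genuinely different route from the paper's. The paper keeps the marker blocks at their fixed length $N$ or $N+1$, selects $\lfloor\beta_{2}|I\cap U_{n}|\rfloor$ points in each block for rationals $t_{1}<\beta_{1}<\beta_{2}<t_{2}$, and absorbs the rounding error of $1$ per block into the slack $(\beta_{2}-\beta_{1})\underline{s}(I)$ by taking $N$ large \emph{relative to the density of $I$} (this is the role of the condition $\beta_{1}\underline{s}(I)+\frac{1}{N}<\beta_{2}\underline{s}(I)$). You instead try to force every cell to carry at least $C=\lceil2/(t_{2}-t_{1})\rceil$ points of $I$, so that the rounding error is a $\frac{t_{2}-t_{1}}{2}$-fraction of each cell's $I$-mass and $N$ can be fixed once and for all; the within-cell arithmetic and the prefix-compatible selection (needed because your boundary cells, unlike the paper's, can carry a positive fraction of the $I$-mass of the window) are correct.

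The gap is in the merging step, precisely at the point you flag as the main obstacle. The only equivariant reading of ``absorb each pending block into its right neighbour, iterated'' is to take the classes of the relation generated by declaring a pending block equivalent to its right neighbour (a greedy chain of cut points is not canonical, since it needs a starting point). Under that reading a class fails to be usable exactly when there is a bi-infinite run of pending blocks, i.e.\ when \emph{every} marker block contains fewer than $C$ points of $I$ --- and this is not the same as ``$I$ has bounded right-support''. Take $I=K\mathbb{Z}$ with $K>N+1$: then $s(I)=1/K>0$, every block holds at most one point of $I$, the single equivalence class is all of $\mathbb{Z}$, and $I\cap B=I$ has no least element from which to run the enumeration $i_{1}<i_{2}<\cdots$, while the escape hatch $\overline{s}(I)=0$ is unavailable. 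The same failure occurs for any sufficiently spread-out $I$ with $0<\overline{s}(I)<C/N$. The repair is to let $N$ depend on $I$: if $\overline{s}(I)>0$, choosing $N>C/\overline{s}(I)$ forces infinitely many non-pending blocks to the right, so every class is right-bounded except possibly one left-infinite class, which meets $[0,\infty)$ in a finite set and is harmless for the (forward) densities; the case $\overline{s}(I)=0$ is trivial. But this reintroduces exactly the dependence of the block length on the density of $I$ that your construction was designed to avoid and that the paper's proof makes explicit, so as written the proposal is incomplete.
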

\begin{rem}
The parameter $y$ may seem superfluous, and it would certainly be
less cumbersome if we could define the set $J$ using only $I$ and
$t_{1}<t_{2}$. But if $I$ is periodic then any equivariant choice
of $J\subseteq I$ must be periodic then any set $J$ determined from
it equivariantly must have the same period and so the density of these
sets must be a multiple of $1/p$, where $p$ is the period of $I$.
The role of the parameter $y$ is precisely to break any such periodicity.
Also, note that $\underline{s}(I\setminus J)\geq(1-t_{2})\underline{s}(I)$
implies $\overline{s}(J)\leq t_{2}\overline{s}(I)$, and similarly
$\overline{s}(I\setminus J)\leq(1-t_{1})\overline{s}(I)$, or with
upper and lower densities reversed. But we will not need these upper
bounds.\end{rem}
\begin{proof}
We may assume that $\underline{s}(I)>0$, otherwise there is nothing
to prove. Choose rational $t_{1}<\beta_{1}<\beta_{2}<t_{2}$ and $N\in\mathbb{N}$
large enough that $\beta_{1}\underline{s}(I)+\frac{1}{N}<\beta_{2}\underline{s}(I)$.
For each finite subset $\emptyset\neq U\subseteq\mathbb{Z}$ choose
once and for all a subset $\widehat{U}\subseteq U$ such that $|\widehat{U}|=\left\lfloor \beta_{2}|U|\right\rfloor $,
so that 
\[
\beta_{2}\frac{|U|}{N}-\frac{1}{N}\leq\frac{|\widehat{U}|}{N}\leq\beta_{2}\frac{|U|}{N}
\]
Let $z=z(y)\in\{0,1\}^{\mathbb{Z}}$ be the $N$-marker derived from
$y$ as in Lemma \ref{prop:two-symbol-AP-factor}. Let $U=\{\ldots<u_{-1}<u_{0}<u_{1}<\ldots\}$
denote the positions of $1$'s in $z$, so $u_{n+1}-u_{n}\in\{N,N+1\}$,
and let $U_{n}=[u_{n},u_{n+1})$. For each $n$ let 
\begin{eqnarray*}
I_{n} & = & I\cap U_{n}\\
J_{n} & = & \widehat{I_{n}}
\end{eqnarray*}
and set
\[
J=\bigcup_{n\in\mathbb{Z}}J_{n}
\]
Evidently $J\subseteq I$ and the definition is measurable and equivariant
in the stated sense. It remains to estimate the density of $J$. Using
the fact that the lengths of $U_{n}$ are uniformly bounded we see
that the sequences 
\[
\frac{1}{n}|I\cap[0,n)|\quad\mbox{and}\quad\frac{1}{u_{n}}\sum_{i=1}^{n}|I_{i}|
\]
have the same $\limsup$ and $\liminf$ as $n\rightarrow\infty$.
Therefore
\begin{eqnarray*}
\underline{s}(J) & = & \liminf_{n\rightarrow\infty}\frac{1}{u_{n}}\sum_{i=1}^{n}|J_{i}|\\
 & \geq & \liminf\frac{1}{u_{n}}\sum_{i=1}^{n}(\beta_{2}|I_{i}|-1)\\
 & = & \beta_{2}\underline{s}(I)-\limsup_{n\rightarrow\infty}\frac{n}{u_{n}}\\
 & \geq & \beta_{2}\underline{s}(I)-\frac{1}{N}\\
 & > & \beta_{1}\underline{s}(I)
\end{eqnarray*}
where we used the fact that $u_{n}\geq nN-O(1)$. The calculation
for $I\setminus J$ is similar, using the fact that $(1-\beta_{2})|U|\leq|U\setminus\widehat{U}|\leq(1-\beta_{1})|U|$.
\end{proof}
We also will need a version for uniform frequencies:
\begin{lem}
\label{lem:selecting-uniform-subset}There is a measurable map $\Sigma_{AP}^{\mathbb{Z}}\times2^{\mathbb{Z}}\times[0,1]_{<}^{2}\rightarrow2^{\mathbb{Z}}$
that assigns to each $y\in\Sigma_{AP}^{\mathbb{Z}}$, $I\subseteq\mathbb{Z}$
and $t_{1}<t_{2}$ a subset $J\subseteq I$ so that the assignment
is is equivariant in the sense that $(Sy,SI,t_{1},t_{2})\mapsto SJ$,
and satisfies $\underline{s}^{*}(J)\geq t_{1}\underline{s}^{*}(I)$
and $\underline{s}^{*}(I\setminus J)\geq(1-t_{2})\underline{s}^{*}(I)$,
and similarly for upper uniform densities.
\end{lem}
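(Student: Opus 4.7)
The plan is to carry out exactly the same construction as in Lemma 3.6 and to observe that the per-block counting estimates used there pass automatically to uniform densities. Concretely, given $(y,I,t_{1},t_{2})$, I pick rationals $t_{1}<\beta_{1}<\beta_{2}<t_{2}$ and a large integer $N$ (to be fixed shortly), apply Proposition \ref{prop:two-symbol-AP-factor} to $y$ to get an $N$-marker whose $1$s partition $\mathbb{Z}$ into consecutive blocks $U_{n}$ of length $N$ or $N+1$, and within each block define $J_{n}:=\widehat{I\cap U_{n}}$ of cardinality $\lfloor\beta_{2}|I\cap U_{n}|\rfloor$ by a fixed rule depending only on the block. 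Setting $J:=\bigcup_{n}J_{n}$, we have $J\subseteq I$, and measurability and equivariance in $(y,I,t_{1},t_{2})$ hold by the same argument as in the previous lemma.

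The key observation is that, since every block has length at most $N+1$, for every interval $[n,n+M-1]\subseteq\mathbb{Z}$ the blocks fully contained inside it cover all but at most $2(N+1)$ positions, and their number is at most $M/N+1$. Summing the block-wise bound $|J_{i}|\geq\beta_{2}|I\cap U_{i}|-1$ over those full blocks yields
\[
|J\cap[n,n+M-1]|\;\geq\;\beta_{2}\,|I\cap[n,n+M-1]|\;-\;2(N+1)\beta_{2}\;-\;M/N\;-\;1,
\]
with error terms \emph{uniform in $n$}. Dividing by $M$ and passing to $\inf_{n}$ (respectively $\sup_{n}$), then $\liminf_{M\to\infty}$ (respectively $\limsup_{M\to\infty}$), gives $\underline{s}^{*}(J)\geq\beta_{2}\underline{s}^{*}(I)-1/N$ and $\overline{s}^{*}(J)\geq\beta_{2}\overline{s}^{*}(I)-1/N$. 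Choosing $N$ large enough that $1/N<(\beta_{2}-\beta_{1})\min(\underline{s}^{*}(I),\overline{s}^{*}(I))$ then yields $\underline{s}^{*}(J)\geq\beta_{1}\underline{s}^{*}(I)\geq t_{1}\underline{s}^{*}(I)$ and the analogous bound for the upper uniform density.

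For the complementary bounds I apply the same argument to the block-wise inequality $|(I\cap U_{n})\setminus J_{n}|\geq(1-\beta_{2})|I\cap U_{n}|$ (which carries no $-1$ correction), obtaining $\underline{s}^{*}(I\setminus J)\geq(1-\beta_{2})\underline{s}^{*}(I)\geq(1-t_{2})\underline{s}^{*}(I)$ and likewise for the upper version. The choice of $N$ as a function of $(y,I,t_{1},t_{2})$ is measurable because $\underline{s}^{*}$ and $\overline{s}^{*}$ are Borel functions of $I$, and the degenerate cases in which one of these uniform densities vanishes make the corresponding conclusion vacuous and are handled separately. The main obstacle is purely bookkeeping: verifying that the $O(N)$ boundary correction does not depend on where the averaging window $[n,n+M-1]$ is placed. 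This is immediate from the uniformly bounded block lengths of the marker, and no idea beyond Lemma \ref{lem:selecting-a-subset} is needed.
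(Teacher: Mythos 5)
Your proposal is correct and follows exactly the route the paper intends: it repeats the block construction of Lemma \ref{lem:selecting-a-subset} and supplies the omitted detail that the boundary and rounding corrections in any window $[n,n+M-1]$ are $O(N)+O(M/N)$ uniformly in $n$, so the estimates pass to uniform densities. The handling of the degenerate case $\underline{s}^{*}(I)=0$ (or $\overline{s}^{*}(I)=0$) and the measurability of the choice of $N$ are the right points to flag, and your treatment of them is fine.
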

The proof is almost exactly the previous one, using the fact that
for large enough $N$, for all $n$ we have $\overline{s}^{*}(I)-\frac{1}{N}<\frac{1}{N}|I_{n}|<\overline{s}^{*}(I)+\frac{1}{N}$,
and similarly for lower uniform densities. We omit the details.

Finally, the following lemma encapsulates the recursive application
of the lemmas above. We state the uniform case but the non-uniform
one is identical with the requisite changes. Let $Q\subseteq[0,1]^{\mathbb{N}}$
denote the set of sequences $(t_{n})_{n=1}^{\infty}$ such that $\sum t_{n}<1$.
\begin{lem}
\label{lem:selecting-a-sequence-of-subsets}There is a measurable
map $\Sigma_{AP}^{\mathbb{Z}}\times2^{\mathbb{Z}}\times Q\rightarrow(2^{\mathbb{Z}})^{\mathbb{N}}$
that assigns to every $y\in\Sigma_{AP}^{\mathbb{Z}}$, $I\subseteq\mathbb{Z}$,
and $(t_{n})_{n=1}^{\infty}\in Q$ a sequence of disjoint subsets
$J_{1},J_{2},\ldots\subseteq I$ satisfying $\underline{s}^{*}(J_{n})\geq t_{n}\underline{s}^{*}(I)$,
and the assignment is equivariant in the sense that $(Sy,SI,t)\mapsto(SJ_{n})_{n\in\mathbb{N}}$. \end{lem}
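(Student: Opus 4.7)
The plan is to iterate Lemma~\ref{lem:selecting-uniform-subset}, peeling off $J_1,J_2,\ldots$ one at a time from successively smaller ``residual'' sets, while using the slack $1-\sum t_n>0$ to keep the densities strictly apart at every step.

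First I would measurably convert the input $(t_n)\in Q$ into a slightly enlarged sequence with headroom. Set $\varepsilon=1-\sum_n t_n>0$ and let $r_n=t_n+\varepsilon/2^{n+1}$, so $r_n>t_n$ and $\sum r_n<1$. Write $R_n=r_1+\cdots+r_n$, so $R_n<1$ for every $n$. Initialize $I_0=I$. Given $I_{n-1}$, define
\[
t'_1=\frac{t_n}{1-R_{n-1}},\qquad t'_2=\frac{r_n}{1-R_{n-1}}.
\]
Since $r_n<1-R_{n-1}$ both parameters lie in $[0,1)$, and $t'_1<t'_2$ because $t_n<r_n$; hence the pair lies in $[0,1]_<^2$. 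Feed $(y,I_{n-1},t'_1,t'_2)$ into Lemma~\ref{lem:selecting-uniform-subset} to obtain $J_n\subseteq I_{n-1}$, and put $I_n=I_{n-1}\setminus J_n$.

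The induction hypothesis is $\underline s^*(I_{n-1})\geq (1-R_{n-1})\underline s^*(I)$, which holds for $n=1$ since $R_0=0$. Using the conclusion of Lemma~\ref{lem:selecting-uniform-subset},
\[
\underline s^*(I_n)\geq (1-t'_2)\underline s^*(I_{n-1})=\frac{1-R_n}{1-R_{n-1}}\,\underline s^*(I_{n-1})\geq (1-R_n)\underline s^*(I),
\]
closing the induction. Moreover
\[
\underline s^*(J_n)\geq t'_1\,\underline s^*(I_{n-1})\geq \frac{t_n}{1-R_{n-1}}\cdot(1-R_{n-1})\underline s^*(I)=t_n\,\underline s^*(I),
\]
which is the required bound. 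Disjointness of the $J_n$ is built into the construction because $J_n\subseteq I_{n-1}=I\setminus(J_1\cup\cdots\cup J_{n-1})$.

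Measurability and equivariance are inherited from Lemma~\ref{lem:selecting-uniform-subset}: each $J_n$ is obtained from $(y,I,t)$ by finitely many measurable operations (computing $R_{n-1}$, $t'_1$, $t'_2$ from $t$, forming $I_{n-1}$ from previously built $J_k$, and one application of the previous lemma), and shifting $(y,I)$ by $S$ commutes with all of these operations by the equivariance built into Lemma~\ref{lem:selecting-uniform-subset}. There is no real obstacle here; the only point that demands care is maintaining the strict inequality $t'_1<t'_2$ needed to invoke the previous lemma, and this is precisely what the cushion $r_n-t_n=\varepsilon/2^{n+1}$ is designed to guarantee.
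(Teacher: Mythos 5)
Your proof is correct and follows essentially the same route as the paper: both iterate Lemma \ref{lem:selecting-uniform-subset}, peeling $J_n$ off the residual set $I_{n-1}=I\setminus\bigcup_{i<n}J_i$ and exploiting the slack $1-\sum t_n>0$ to keep the two density parameters strictly separated at each stage. Your explicit normalized choice $t'_1=t_n/(1-R_{n-1})$, $t'_2=r_n/(1-R_{n-1})$ with the telescoping product is a slightly cleaner bookkeeping of the parameters that the paper instead produces by an inductive existence argument for $r_n^{\pm}$, but the underlying mechanism is identical.
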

\begin{proof}
Fix $y\in\Sigma_{AP}^{\mathbb{Z}}$ and $I\subseteq\mathbb{Z}$. First,
suppose we are given a sequence $0<r_{n}^{-}<r_{n}^{+}<1$.  Choose
intervals $J_{n}$ recursively applying the previous lemma at stage
$n$ to $(y,I\setminus\bigcup_{i<n}J_{i},r_{n}^{-},r_{n}^{+})$. Writing
$I_{n}=I\setminus\bigcup_{i<n}J_{i}$ for the interval from which
$J_{n}$ was chosen, we have the relations
\begin{eqnarray*}
\underline{s}^{*}(J_{n}) & \geq & r_{n}^{-}\cdot\underline{s}^{*}(I_{n})\\
\underline{s}^{*}(I_{n}) & \geq & (1-r_{n-1}^{+})\cdot\underline{s}^{*}(I_{n-1})
\end{eqnarray*}
Therefore 
\begin{eqnarray*}
\underline{s}^{*}(I_{n}) & \geq & \prod_{i<n}(1-r_{i}^{+})\cdot\underline{s}^{*}(I)\\
\underline{s}^{*}(J_{n}) & \geq & r_{n}^{-}\cdot\prod_{i<n}(1-r_{i}^{+})\underline{s}^{*}(I)
\end{eqnarray*}
Now let $t_{n}>0$ be given satisfying $\sum t_{n}<1$. We claim that
we can choose $0<r_{n}^{-}<r_{n}^{+}<1$ to satisfy
\begin{eqnarray*}
r_{n}^{-}\prod_{i<n}(1-r_{i}^{+}) & > & t_{n}\\
\prod_{i\leq n}(1-r_{i}^{+}) & > & \sum_{i>n}t_{i}
\end{eqnarray*}
This is done by induction: For $n=1$, the requirements simplify to
$t_{1}<r_{1}^{-}<r_{n}^{+}<1-\sum_{i>1}t_{i}$, and the existence
of such $r_{1}^{\pm}$ follows from the inequality $t_{1}<1-\sum_{i>1}t_{i}$,
which is our hypothesis. Next, assuming that the inequalities above
hold for $n-1$, write $a=\prod_{i<n}(1-r_{i}^{+})$, so by assumption
$a>\sum_{i>n-1}t_{i}$. We are looking for $r_{n}^{\pm}$ satisfying
$\frac{1}{a}t_{n}<r_{n}^{-}<r_{n}^{+}<1-\frac{1}{a}\sum_{i>n}t_{i}$,
and they exist provided that $\frac{1}{a}t_{n}<1-\frac{1}{a}\sum_{i>n}t_{i}$,
which, after rearranging, is just the inequality $\sum_{i\geq n}t_{i}<a$,
which we know to hold.

In conclusion, we have shown how to find $r_{n}^{\pm}$ as above,
and by the discussion at the start of the proof we obtain $\underline{s}^{*}(I_{n})>t_{n}\underline{s}^{*}(I)$,
as desired.
\end{proof}

\subsection{\label{sub:Equivariant-partial-injections}Equivariant partial injections }

Next, we show how to construct injections between subsets of $\mathbb{Z}$
in a measurable and equivariant manner. The space of all partially
defined maps between countable sets $A$ and $B$ can be represented
as $(B\cup\{*\})^{A}$, where $*$ is a symbol not already in $B$,
and a sequence $(z_{i})$ in this space represents the map $\{i\in A\,:\,z_{i}\neq*\}\rightarrow B$
given there by $i\mapsto z_{i}$. We write $\inj_{*}(A,B)$ for the
space of all partially defined injections (the $*$ implying that
the maps are partially defined), and note that with the structure
above $\inj_{*}(A,B)$ is a Borel set.

It is useful to extend the ``shift'' action from sets to functions:
for $I,J\subseteq\mathbb{Z}$ and $f:I\rightarrow J$ let $Sf:SI\rightarrow SJ$
be given by $Sf(i)=f(i+1)-1$. We say that a map $X\rightarrow\inj_{*}(\mathbb{Z},\mathbb{Z})$,
$x\mapsto f_{x}$, is equivariant if $f_{Tx}=Sf_{x}$, which is just
another way of saying that $f_{Tx}(i)=f_{x}(i+1)-1$. 
\begin{lem}
\label{lem:equivarian-injections}Let $y\in\Sigma_{AP}^{\mathbb{Z}}$
and $I,J\subseteq\mathbb{Z}$ sets such that $\overline{s}(I)<\overline{s}(J)$
(or $\underline{s}(I)<\underline{s}(J)$). Then there exists a measurable
map $(y,I,J)\mapsto f_{(y,I,J)}\in\inj(I,J)$ that is equivariant
in the sense that $(Sy,SI,SJ)\mapsto Sf_{(y,I,J)}$. Furthermore,
for any $\overline{s}(I)<s<\overline{s}(J)$ (respectively $\underline{s}(I)<s<\underline{s}(J)$)
we can ensure that $\overline{s}(\im(f_{(y,I,J)}))<s$ (respectively
$\underline{s}(\im(f_{(y,I,J)}))<s$).\end{lem}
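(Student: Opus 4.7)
By symmetry between the upper- and lower-density cases, I would focus on the upper-density version: fix $\overline{s}(I) < s < \overline{s}(J)$, and the goal is an equivariant injection $f$ with $\overline{s}(\im f) < s$. The plan is to construct $f$ in two stages: first pass to a well-behaved subset $J' \subseteq J$ with $\overline{s}(I) < \overline{s}(J') < s$, then construct an equivariant injection $I \to J'$. Since $\im f \subseteq J'$, this automatically gives $\overline{s}(\im f) \le \overline{s}(J') < s$, which is the required bound on the image.

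For the first stage, I would apply Lemma~\ref{lem:selecting-a-subset} to $(y,J,t_1,t_2)$ with parameters chosen so that $\overline{s}(I) < t_1 \overline{s}(J)$ and $t_2 \overline{s}(J) < s$, both possible since $\overline{s}(I) < s < \overline{s}(J)$. The lemma directly yields $J' \subseteq J$ with $\overline{s}(J') \ge t_1 \overline{s}(J) > \overline{s}(I)$. Moreover, inspection of the block-wise construction in the proof of that lemma (which selects at most $\lfloor t_2 |J \cap U_n|\rfloor$ elements from each marker block $U_n$) gives the complementary upper bound $\overline{s}(J') \le t_2 \overline{s}(J) < s$ by summing over blocks and absorbing the $O(N)$ boundary error when dividing by $M \to \infty$. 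Thus $\overline{s}(I) < \overline{s}(J') < s$.

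For the second stage, I would pick a fresh marker from $y$ at a much larger scale $M$, partitioning $\mathbb{Z}$ into blocks $U_n$. Setting $a_n = |I \cap U_n|$ and $b_n = |J' \cap U_n|$, within each block I would match the first $\min(a_n,b_n)$ elements of $I \cap U_n$ to the first $\min(a_n,b_n)$ elements of $J' \cap U_n$ in increasing order. Unmatched elements of $I$ (from blocks with $a_n > b_n$) would be placed in a forward queue and matched to surplus $J'$-elements in subsequent blocks with $b_n > a_n$. The density gap $\overline{s}(J') > \overline{s}(I)$ should ensure that the cumulative surplus $\sum_{m \le n}(b_m - a_m)$ is positive and grows along a subsequence, so the queue drains infinitely often in the forward direction; a symmetric backward construction handles the $-\infty$ direction.

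The main obstacle will be making this queue-based matching rigorously equivariant. Because $\overline{s}$ is a forward-limsup condition, the inequality $\overline{s}(J') > \overline{s}(I)$ does not imply uniform dominance of cumulative counts of $J'$ over those of $I$, so the queue might fail to drain on one side of the origin. To get around this I would likely refine the first stage using Lemma~\ref{lem:selecting-a-sequence-of-subsets} to split $J'$ into countably many disjoint pieces with controlled densities, dedicating each piece to a corresponding portion of $I$, so that each individual matching subproblem has a local density surplus and hence a bounded queue. The lower-density case $\underline{s}(I) < \underline{s}(J)$ is treated analogously, with the advantage that $\underline{s}(J) > s$ is already a uniform statement for all large intervals $[0,N)$, making the matching stage easier.
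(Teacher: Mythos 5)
Your first stage (pass to $J'\subseteq J$ with $\overline{s}(I)<\overline{s}(J')<s$ via Lemma \ref{lem:selecting-a-subset}, then note $\im f\subseteq J'$) is exactly what the paper does, and is fine. The gap is in the second stage, and it is the obstacle you yourself flag: the hypothesis $\overline{s}(I)<\overline{s}(J')$ is only a statement about forward limsup densities, so it gives \emph{no} local control whatsoever. For instance $I$ may contain $(-\infty,0]$ entirely, or arbitrarily long runs of consecutive integers, while $J'$ is empty on those stretches (even empty on an entire half-line); the hypothesis is still satisfied. In such a situation every scheme built on marker blocks with a ``queue'' that is supposed to drain, or on splitting $J'$ into pieces each of which locally dominates ``a corresponding portion of $I$'', must fail: there is no equivariant way to designate the portion of $I$ to be served, the queue accumulated by time $0$ can already be infinite, and elements of $I$ simply must be displaced by amounts that are unbounded and depend on the individual point. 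Your proposed repair via Lemma \ref{lem:selecting-a-sequence-of-subsets} does not address this, since that lemma only controls densities of the pieces of $J'$, not their position relative to the locally dense parts of $I$.

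The paper's construction sidesteps all of this with a greedy rule indexed by displacement rather than by position: at stage $k=1,2,\ldots$ declare $f(i)=i+k$ whenever $i\in I$ is not yet assigned and $i+k\in J'$ is not yet in the image. Equivariance is automatic because the rule at each stage is translation-covariant, and no queue or starting point is needed. Termination is a pointwise (not uniform) matter: if $i$ were still unassigned after $k$ stages, an injectivity count shows $|J'\cap[i,i+k]|\leq|I\cap[i,i+k]|$; but since shifting the base point does not change limsup densities, $\overline{s}(I)<\overline{s}(J')$ guarantees that for each $i$ there is \emph{some} $k$ with $|I\cap[i,i+k]|<|J'\cap[i,i+k]|$, so every $i$ is eventually assigned. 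The lesson is that the density hypothesis is used only to certify that each point is matched at some finite stage, with no bound on how far it travels; any attempt to bound displacements or queue sizes is doomed under these hypotheses.
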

\begin{proof}
We prove the statement for upper densities, the lower density case
being similar. Fixing $y,I,J$ as in the statement, we first show
how to  construct $f=f_{(y,I,J)}$ without control over the image
density. We define $f$ by induction. At the $k$-th stage we say
that $i\in I$ and $j\in J$ are free if $f$ is not yet defined on
$i$ and $j$ is not yet in the image. Start with $f=\emptyset$.
For each $k$, define $f(i)=i+k$ if $i\in I$ and $i+k\in J$ are
free, otherwise leave $f$ undefined on $i$. We claim that $f$ is
eventually defined on every $i\in I$. To see this note that by the
assumption $\overline{s}(J)>\overline{s}(I)$ (or $\underline{s}(J)>\underline{s}(I)$),
there exists a $k$ such that $|I\cap[i,i+k]|<|J\cap[i,i+k]|$; it
is clear that this $i$ must have been assigned in one of the first
$k$ steps. 

It is clear that $f_{y}:I\rightarrow J$ is injective and that the
construction is shift-invariant and measurable, as required. 

For the second statement, given $s>\overline{s}(I)$ let $t_{1}=\overline{s}(I)/\overline{s}(J)$
and $t_{2}=s/\overline{s}(J)$, and apply Lemma \ref{lem:selecting-a-subset}
to $(y,J,t^{-},t^{+})$. We obtain a subset $J'\subseteq J$ depending
measurable and equivariantly on the data and satisfying $\overline{s}(I)=t_{1}\overline{s}(J)<\overline{s}(J)<t_{2}\overline{s}(J)=s$.
Now apply the first part of this lemma to $(y,I,J')$ to obtain $f_{y}\in\inj(I,J')\subseteq\inj(I,J)$.
Since $\im f_{y}\subseteq J'$ we have $\overline{s}(\im(f_{y}))<s$.
\end{proof}
We require a variant of Lemma \ref{lem:equivarian-injections} that
uses uniform densities and produces partial injections $f\in\inj_{*}(\mathbb{Z},\mathbb{Z})$
with bounded displacement. Here $f\in\inj_{*}(\mathbb{Z},\mathbb{Z})$
is said to have bounded displacement if there is a constant $M=M(f)$
(the displacement) such that $|n-f(n)|<M$ for all $n$ in the domain.
When $f=f_{z}$ depends on a parameter $z$ the statement that $f_{z}$
has bounded displacement does not indicate that the constant $M(f_{z})$
is uniform in $z$.
\begin{lem}
\label{lem:equivariant-injections-uniform-versions}Let $y\in\Sigma_{AP}^{\mathbb{Z}}$
and let $I,J\subseteq\mathbb{Z}$ be sets such that $\overline{s}^{*}(I)<\underline{s}^{*}(J)$.
Then there exists a measurable map $(y,I,J)\mapsto f=f_{(y,I,j)}\in\inj(I,J)$
that is equivariant in the sense that $(Sy,SI,SJ)\mapsto Sf_{(y,I,J)}$,
and such that $f_{(y,I,J)}$ has bounded displacement and satisfies
$\overline{s}^{*}(\im f_{(y,I,J)})=\overline{s}^{*}(I)$.
\end{lem}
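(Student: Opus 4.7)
The strategy is to fuse the greedy-matching idea of Lemma~\ref{lem:equivarian-injections} with the marker construction of Proposition~\ref{prop:two-symbol-AP-factor}. The marker chops $\mathbb{Z}$ into bounded-length blocks; the uniform-density gap forces Hall's condition within every block; and an order-preserving block-wise matching then has displacement bounded by the block length. Bounded displacement will also automatically give the stated equality of upper uniform densities.

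In detail: since $\overline{s}^{*}(I)=\lim_{N\to\infty}\sup_{n}|I\cap[n,n+N-1]|/N$ and $\underline{s}^{*}(J)=\lim_{N\to\infty}\inf_{n}|J\cap[n,n+N-1]|/N$, with the former strictly less than the latter, there is a least $N=N(I,J)\geq 1$ for which, for every $n\in\mathbb{Z}$,
\[
|I\cap[n,n+N-1]| \leq |J\cap[n,n+N-1]| \quad\text{and}\quad |I\cap[n,n+N]| \leq |J\cap[n,n+N]|,
\]
and this $N$ is a Borel function of $(I,J)$. Apply Proposition~\ref{prop:two-symbol-AP-factor} to $y$ with gap parameter $N$ to obtain an aperiodic $N$-marker $z=z(y)\in\{0,1\}^{\mathbb{Z}}$, let $\{u_{k}\}_{k\in\mathbb{Z}}$ enumerate the positions of its $1$'s in increasing order, and partition $\mathbb{Z}$ into the blocks $U_{k}=[u_{k},u_{k+1})$, each of length $N$ or $N+1$. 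Define $f=f_{(y,I,J)}$ block by block: for each $k$, if $I\cap U_{k}=\{i_{1}<\cdots<i_{r}\}$ and $J\cap U_{k}=\{j_{1}<\cdots<j_{s}\}$, set $f(i_{\ell})=j_{\ell}$ for $\ell=1,\ldots,r$. The inequality above guarantees $r\leq s$, so $f$ is defined on all of $I\cap U_{k}$; gluing over $k$ gives $f\in\inj(I,J)$ with displacement at most $N+1$.

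Measurability of $(y,I,J)\mapsto f$ follows from the Borel-measurability of $N(I,J)$, of the marker $z(y)$, and of the within-block order-preserving matching. Equivariance follows because $z(Sy)=Sz(y)$, so the blocks $U_{k}$ shift with $y$ and the matching rule inside each block commutes with the shift. Finally, $\overline{s}^{*}(\im f)=\overline{s}^{*}(I)$ is a general consequence of bounded displacement: if $f$ has displacement at most $M$ then
\[
|I\cap[n+M,n+L-M-1]| \;\leq\; |\im f\cap[n,n+L-1]| \;\leq\; |I\cap[n-M,n+L+M-1]|,
\]
so normalizing by $L$, taking $\sup_{n}$, and sending $L\to\infty$ pins $\overline{s}^{*}(\im f)$ to $\overline{s}^{*}(I)$. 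The argument is quite robust; the only delicate point is verifying that $N(I,J)$ is well-defined and Borel, but this is immediate from the convergence above together with the integer-valuedness of the counting functions involved.
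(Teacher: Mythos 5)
Your proof is correct, but it takes a genuinely different route from the paper's. The paper simply reuses the greedy stage-by-stage matching from Lemma \ref{lem:equivarian-injections} (at stage $k$, match $i$ to $i+k$ when both are free), observing that the uniform density gap bounds the stage at which every $i\in I$ gets matched, hence bounds the displacement; it then cites Lemma \ref{lem:bounded-displacement-preserves-uniform-densities} for the density equality. You instead fix the least window length $N$ at which the Hall-type inequality $|I\cap[n,n+N-1]|\leq|J\cap[n,n+N-1]|$ holds for \emph{every} $n$ (and likewise for $N+1$), invoke the marker construction of Proposition \ref{prop:two-symbol-AP-factor} to cut $\mathbb{Z}$ into blocks of length $N$ or $N{+}1$, and match order-preservingly inside each block. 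This is valid: the existence of $N$ follows from $\limsup<c<\liminf$ for the window averages, $N(I,J)$ is Borel and shift-invariant, the block decomposition shifts equivariantly with $y$, and your two-sided counting inequality correctly re-derives the content of Lemma \ref{lem:bounded-displacement-preserves-uniform-versions} stated in the paper without proof. The trade-off: your version makes the displacement bound completely explicit ($\leq N$) and reduces totality of $f$ to a one-line pigeonhole per block, at the cost of invoking the marker machinery, which the paper's greedy construction does not need for this lemma (it only needs $y$ in the non-uniform version, to control the image density via Lemma \ref{lem:selecting-a-subset}).
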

The proof is identical to the previous, noting that, because of uniformity,
$k$ can be chosen from a fixed bounded set and hence $f_{(y,I,J)}$
has bounded displacement. Then use the fact that the last conclusion
of the lemma is a consequence of the earlier ones, because:
\begin{lem}
\label{lem:bounded-displacement-preserves-uniform-densities}Let $I\subseteq\mathbb{Z}$.
Suppose that $f:I\rightarrow\mathbb{Z}$ is an injection with bounded
displacement and let $J=\im(f)$. Then $\underline{s}^{*}(J)=\underline{s}^{*}(I)$
and $\overline{s}^{*}(J)=\overline{s}^{*}(I)$.
\end{lem}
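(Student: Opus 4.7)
The plan is to exploit the simple geometric consequence of bounded displacement: if $|f(i) - i| < M$ for all $i \in I$, then $f$ maps each block $I \cap [n, n+N-1]$ into the slightly enlarged block $J \cap [n-M, n+N+M-1]$. Combined with injectivity, this yields the counting inequality
\[
|I \cap [n, n+N-1]| \;\leq\; |J \cap [n-M, n+N+M-1]| \qquad \text{for all } n \in \mathbb{Z},\ N \geq 1.
\]
This is the only nontrivial input; the rest is a bookkeeping exercise with the definitions of $\overline{s}^*$ and $\underline{s}^*$.

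Observe next that $f^{-1}: J \to I$ is also an injection of bounded displacement (with the same constant $M$), so it suffices to prove the two \emph{inequalities} $\overline{s}^*(I) \leq \overline{s}^*(J)$ and $\underline{s}^*(I) \leq \underline{s}^*(J)$; applying the same argument to $f^{-1}$ gives the reverse inequalities and hence equality. For the upper case, take $\sup_{n}$ in the counting inequality and reindex the right-hand side by $m = n - M$ to obtain
\[
\sup_n \tfrac{1}{N}|I \cap [n, n+N-1]| \;\leq\; \tfrac{N+2M}{N}\cdot \sup_m \tfrac{1}{N+2M}|J \cap [m, m+N+2M-1]|.
\]
Since $\frac{N+2M}{N} \to 1$ and $N + 2M \to \infty$ as $N \to \infty$, taking $\limsup_{N\to\infty}$ yields $\overline{s}^*(I) \leq \overline{s}^*(J)$. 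The lower case is identical with $\inf$ in place of $\sup$ and $\liminf$ in place of $\limsup$.

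There is no real obstacle here; the only thing to watch is that the $\liminf/\limsup$ over $N$ and the limit $\frac{N+2M}{N} \to 1$ combine correctly, which is immediate since the multiplicative correction is a convergent positive sequence. Applying this to both $f$ and $f^{-1}$ then gives $\underline{s}^*(I) = \underline{s}^*(J)$ and $\overline{s}^*(I) = \overline{s}^*(J)$, as desired.
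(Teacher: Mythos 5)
Your proof is correct. The paper omits the proof as immediate, and your argument (the counting inequality $|I\cap[n,n+N-1]|\leq|J\cap[n-M,n+N+M-1]|$ from injectivity plus bounded displacement, applied symmetrically to $f$ and $f^{-1}$) is exactly the intended one.
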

The proof is immediate and we omit it.

\section{\label{sec:General-strategy}General strategy}

In this section we set the stage for the proof of the main theorem,
proving a variety of technical results. The main one is Proposition
\ref{prop:general-strategy}, which gives a sufficient condition for
the existence of a finite generator that will underly the generator
theorems in later sections. It also gives a new characterization of
Borel systems without invariant probability measures (see the discussion
after the proof).

\subsection{\label{sub:Constructing-generators-from-allocations}Constructing
generators using allocations}

For the following discussion it is convenient to have a concrete representation
of $X$. To this end fix a measurable (but not equivariant!) bijection
$\eta:X\rightarrow\{0,1\}^{\mathbb{N}}$, which can be done because
all standard Borel spaces are isomorphic. We then have, for each $x\in X$,
a sequence $\eta(x)$ of bits identifying it uniquely. We call $\eta(x)$
the static name of $x$ (static because its definition does not depend
on $T$ in any way). 

Now, if one wants to produce an injective symbolic factor map $X\rightarrow\{0,1\}^{\mathbb{Z}}$,
then one must somehow encode the binary sequence $\eta(x)$ in $\pi(x)$.
Since the map is equivariant, this means that $\eta(T^{n}x)$ is encoded
in $\pi(T^{n}x)$, which is just a shift of $\pi(x)$, so in fact
$\pi(x)$ must encode all the sequences $\eta(T^{n}x)$. Thus, what
we want to do is encode the binary array $\widehat{x}\in\{0,1\}^{\mathbb{Z}\times\mathbb{N}}$
given by $\widehat{x}_{i,j}=\eta(T^{i}x)_{j}$ into a linear binary
sequence $\pi(x)\in\Delta^{\mathbb{Z}}$, in a measurable and equivariant
manner. 

The most direct approach, which is the one we shall use, is to construct
an injection $F_{x}:\mathbb{Z}\times\mathbb{N}\rightarrow\mathbb{Z}$.
Then we can define $\pi:X\rightarrow\{0,1\}^{\mathbb{Z}}$ by 
\begin{equation}
\pi(x)_{F_{x}(i,j)}=\widehat{x}_{i,j}=\eta(T^{i}x)_{j}\label{eq:factor-map-from-allocation}
\end{equation}
and fill in any unused bits with $0$. Then every bit in $\widehat{x}$
has been written somewhere in $\pi(x)$. 

In order to make the map $\pi$ above measurable and equivariant,
we must require the same from $F_{x}$. Endow the space of functions
between countable sets $A,B$ with the product structure on $B^{A}$,
which makes it into a standard Borel space. The subset consisting
of injective maps $A\rightarrow B$ is measurable, and we denote it
$\inj(A,B)$. We say that a map $X\rightarrow\inj(\mathbb{Z}\times\mathbb{N},\mathbb{Z})$,
$x\mapsto F_{x}$, is equivariant if
\[
F_{Tx}(i,j)=F_{x}(i+1,j)-1
\]
Given $x\mapsto F_{x}$, for each $n\in\mathbb{N}$ we can define
functions $F_{x,n}:\mathbb{Z}\rightarrow\mathbb{Z}$ by $F_{x,n}(i)=F_{x}(i,n)$,
and then equivariance in the sense above is the same as equivariance,
in the sense of Section \ref{sub:Equivariant-partial-injections},
of each of the maps $X\mapsto\inj(\mathbb{Z},\mathbb{Z})$, $x\mapsto F_{x,n}$.
\begin{defn}
\label{def:allocation}A map $X\rightarrow\inj(\mathbb{Z}\times\mathbb{N},\mathbb{Z})$
that is measurable and equivariant is called an allocation.
\end{defn}
If $F_{x}$ is an allocation, then the map $\pi:X\rightarrow\{0,1\}^{\mathbb{Z}}$
given by (\ref{eq:factor-map-from-allocation}) is easily seen to
be measurable, and a short calculation shows that it is also equivariant:
To see this, let $y=Tx$ and fix $k\in\mathbb{Z}$ and $n\in\mathbb{N}$,
let $i=F_{y}(k+1,n)$ and $j=F_{y}(k,n)$, so $\pi(x)_{i}=\eta(T^{k+1}x)_{n}$
and $\pi(y)_{j}=\eta(T^{k}x)_{n}$, and note that
\[
j=F_{y}(k,n)=F_{Tx}(k,n)=F_{x}(k+1,n)-1=i-1
\]
This means that $\pi(y)_{i-1}=\pi(x)_{i}$ for $i$ in the image of
$F_{x}$. Since clearly $\im(F_{y})=\im(F_{x})-1$, we have $\pi(y)_{i-1}=\pi(x)_{i}=0$
for $i\in\mathbb{Z}\setminus\im F_{x}$. Thus we have shown that $\pi(Tx)=\pi(y)=S\pi(x)$. 

This procedure for encoding $\widehat{x}$ in $\pi(x)$ is not yet
reversible, but by (\ref{eq:factor-map-from-allocation}), if we know
both $\pi(x)$ and $F_{x}$ then we can recover the sequence $\eta(x)$
(and in fact $\eta(T^{j}x)$ for all $j$), and therefore recover
$x$. Thus we have established the following proposition:
\begin{prop}
\label{prop:allocation-implies-generator-mod-subalgebra}Let $(X,\mathcal{B},T)$
be a Borel system and $F:x\mapsto F_{x}$ an allocation. Then there
is a symbolic factor map $\pi:X\rightarrow\{0,1\}^{\mathbb{Z}}$ (equivalently,
a two-set partition $\beta$) such that $\sigma(\pi)\lor\sigma(F)=\mathcal{B}$
(respectively $\sigma_{T}(\beta)\lor\sigma(F)=\mathcal{B}$).
\end{prop}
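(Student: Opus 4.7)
The plan is to take the map $\pi$ already defined in equation (\ref{eq:factor-map-from-allocation}) and verify the three properties: measurability, equivariance, and that $\sigma(\pi)\vee\sigma(F)=\mathcal{B}$. Explicitly, set $\pi(x)_k=\eta(T^ix)_j$ if $k=F_x(i,j)$ for some (necessarily unique, by injectivity of $F_x$) pair $(i,j)\in\mathbb{Z}\times\mathbb{N}$, and $\pi(x)_k=0$ otherwise.

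First I would dispatch measurability: the event $\{k\in\im F_x\}$ is measurable in $x$ since $F_x$ is a measurable family of injections, and on this event the preimage $(i,j)=F_x^{-1}(k)$ is a measurable function of $x$; composed with the measurable assignments $x\mapsto\eta(T^ix)_j$ this shows each coordinate of $\pi(x)$ is measurable, hence $\pi$ is measurable. Equivariance of $\pi$ is essentially the computation already carried out in the paragraph preceding the proposition, using the identity $F_{Tx}(i,j)=F_x(i+1,j)-1$; I would simply record it as a one-line check that $\pi(Tx)_{k-1}=\pi(x)_k$ both inside and outside $\im F_x$.

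The substantive step is the $\sigma$-algebra equality. The key observation is that the joint map
\[
\Phi:X\longrightarrow\{0,1\}^{\mathbb{Z}}\times\inj(\mathbb{Z}\times\mathbb{N},\mathbb{Z}),\qquad \Phi(x)=(\pi(x),F_x),
\]
is injective: given $(\pi(x),F_x)$ one reads off $\eta(T^ix)_j=\pi(x)_{F_x(i,j)}$ for every $(i,j)$, and taking $i=0$ recovers $\eta(x)\in\{0,1\}^{\mathbb{N}}$, whence $x=\eta^{-1}(\eta(x))$. Since both domain and codomain are standard Borel and $\Phi$ is an injective Borel map, the Lusin--Souslin theorem implies that $\Phi^{-1}$ is Borel on $\Phi(X)$, so $\sigma(\Phi)=\mathcal{B}$; equivalently $\sigma(\pi)\vee\sigma(F)=\mathcal{B}$.

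For the parenthetical version, let $\beta=\{\pi^{-1}[0],\pi^{-1}[1]\}$ where $[0],[1]\subseteq\{0,1\}^{\mathbb{Z}}$ are the time-$0$ cylinders. The cylinder sets of $\{0,1\}^{\mathbb{Z}}$ generate its Borel $\sigma$-algebra, so $\sigma_T(\beta)=\pi^{-1}(\mathcal{B}_{\{0,1\}^{\mathbb{Z}}})=\sigma(\pi)$, and the claim follows. No step here presents a real obstacle; the only thing to be slightly careful about is that the Borel structure on $\inj(\mathbb{Z}\times\mathbb{N},\mathbb{Z})$ (inherited as a Borel subset of $\mathbb{Z}^{\mathbb{Z}\times\mathbb{N}}$) is standard, so that Lusin--Souslin applies verbatim.
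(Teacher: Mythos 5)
Your proof is correct and follows essentially the same route as the paper, which establishes the proposition via the discussion preceding it: define $\pi$ by equation (\ref{eq:factor-map-from-allocation}), check equivariance using $F_{Tx}(i,j)=F_{x}(i+1,j)-1$, and observe that $\pi(x)$ together with $F_{x}$ recovers $\eta(x)$ and hence $x$. Your only addition is to make explicit the Lusin--Souslin step turning injectivity of the joint map into the $\sigma$-algebra identity, which the paper leaves implicit.
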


\subsection{\label{sub:proof-of-general-strategy}Constructing generators from
deficient $\omega$-covers }

We say that a collection $\alpha=\{A_{i}\}$ of sets is an $\omega$\emph{-cover}
of $X$ if every point $x\in X$ belongs to infinitely many of the
$A_{i}$. Our main technical tool for constructing generators is the
following:
\begin{prop}
\label{prop:general-strategy}Let $(X,\mathcal{B},T)$ be a Borel
system. Let $\alpha=\{A_{i}\}_{i=1}^{\infty}\subseteq\mathcal{B}$
be an $\omega$-cover of $X$ and suppose that either
\begin{enumerate}
\item [(a)]$\sum_{i=1}^{\infty}\overline{s}(x,A_{i})<1$ for all $x\in X$,
or
\item [(b)]There is a partition $\mathbb{N}=\bigcup_{u=1}^{\infty}I_{u}$
such that for each $u\in\mathbb{N}$ the collection $\{A_{i}\}_{i\in I_{u}}$
is pairwise disjoint, and for any finite $J\subseteq\mathbb{N}$ we
have $\sum_{u}\overline{s}^{*}(\bigcup_{i\in I_{u}\cap J}A_{i})<1-\sup_{i\in\mathbb{N}}\overline{s}^{*}(A_{i})$.
\end{enumerate}
Then there exists a two-set partition $\beta$ such that $\sigma_{T}(\beta)\lor\sigma_{T}(\alpha)=\mathcal{B}$.
In particular, if there exists a finite partition $\gamma$ such that
$\alpha\subseteq\sigma_{T}(\gamma)$, then $\beta\lor\gamma$ is a
finite generator.
\end{prop}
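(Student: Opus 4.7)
The plan is to apply Proposition~\ref{prop:allocation-implies-generator-mod-subalgebra}: it suffices to construct an allocation $F:x\mapsto F_x$ that is measurable with respect to $\sigma_T(\alpha)$, since the corresponding two-set partition $\beta$ will satisfy $\sigma_T(\beta)\vee\sigma_T(\alpha)\supseteq\sigma_T(\beta)\vee\sigma(F)=\mathcal{B}$. The in-particular clause then follows from $\alpha\subseteq\sigma_T(\gamma)\Rightarrow\sigma_T(\alpha)\subseteq\sigma_T(\gamma)$, which gives $\sigma_T(\beta\vee\gamma)=\sigma_T(\beta)\vee\sigma_T(\gamma)\supseteq\sigma_T(\beta)\vee\sigma_T(\alpha)=\mathcal{B}$.

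Before building $F$, observe that under either hypothesis the itinerary $\alpha_*(x)$ is automatically aperiodic: if $\alpha_*(x)$ were periodic of period $p$ then the $\omega$-cover would force $\sum_n s(x,A_n)=\tfrac{1}{p}\sum_{k=0}^{p-1}|\alpha_*(x)_k|=\infty$, contradicting (a); under (b), letting $J$ exhaust $\mathbb{N}$ in the hypothesis leads to the same contradiction via pairwise disjointness within each page. After a standard recoding through finite sub-refinements of $\alpha$ to replace the uncountable target alphabet by a countable one, this produces a $\sigma_T(\alpha)$-measurable aperiodic factor $y:X\to\Sigma_{AP}^{\mathbb{Z}}$ to feed as the ``randomness'' parameter into the selection and injection lemmas of Section~\ref{sec:Preliminary-constructions}.

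The construction then produces a $\sigma_T(\alpha)$-measurable writing region $C(x)=\bigsqcup_n C_n(x)\subseteq\mathbb{Z}$ with $\sum_n\overline{s}(C_n(x))<1$, together with an equivariant marker sequence $M(x)\subseteq C_0(x)$ whose gaps tend to infinity. Under (a), subadditivity of $\overline{s}$ on finite unions yields $\underline{s}\bigl(\mathbb{Z}\setminus\bigcup_{i\leq N}\{k:T^kx\in A_i\}\bigr)\geq 1-\sum_{i\leq N}\overline{s}(x,A_i)\geq\delta(x):=1-\sum_{i=1}^{\infty}\overline{s}(x,A_i)>0$ uniformly in $N$, providing a positive-lower-density free region; apply Lemma~\ref{lem:selecting-a-sequence-of-subsets} in its lower-density variant (obtained by rerunning the proof with Lemma~\ref{lem:selecting-a-subset} in place of Lemma~\ref{lem:selecting-uniform-subset}) to this region with parameter $y(x)$ and weights $t_n$ summing to less than $\delta(x)$ to obtain the disjoint $C_n(x)$. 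Under (b), the paging hypothesis makes each $\{A_i\}_{i\in I_u}$ disjoint and the uniform-density analogue goes through via Lemmas~\ref{lem:selecting-uniform-subset} and~\ref{lem:equivariant-injections-uniform-versions}. The marker $M(x)$ is then obtained by applying Proposition~\ref{prop:two-symbol-AP-factor} to $y(x)$ and intersecting with $C_0(x)$ via Lemma~\ref{lem:equivarian-injections}.

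The allocation $F_x$ is built block by block: in the $j$-th block of $M(x)$, reserve a growing number of $C(x)$-slots and use them to hold the $F$-values for all not-yet-assigned pairs $(i,n)\in\mathbb{Z}\times\mathbb{N}$ with $|i|+n\leq j$. Since block lengths tend to infinity and $M(x)$ is bi-infinite, every pair is eventually assigned, yielding an injection $\mathbb{Z}\times\mathbb{N}\to C(x)\subseteq\mathbb{Z}$; equivariance propagates from $C(x)$, $M(x)$, and the deterministic assignment rule to $F_x$ itself. The main obstacle is precisely this variable-length block coding: a naive row-wise injection $\mathbb{Z}\to C_n(x)$ via Lemma~\ref{lem:equivarian-injections} is ruled out by the density mismatch $\overline{s}(\mathbb{Z})=1>\overline{s}(C_n(x))$, so the unbounded block sizes supplied by the marker lemma are what make it possible to absorb the whole family $\mathbb{Z}\times\mathbb{N}$ into the thin set $C(x)$. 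Once $F_x$ is verified to be a $\sigma_T(\alpha)$-measurable allocation, Proposition~\ref{prop:allocation-implies-generator-mod-subalgebra} delivers the desired $\beta$.
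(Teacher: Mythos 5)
Your top-level reduction is the same as the paper's: build an allocation $F_{x}\in\inj(\mathbb{Z}\times\mathbb{N},\mathbb{Z})$ and invoke Proposition \ref{prop:allocation-implies-generator-mod-subalgebra}. But your construction of the allocation does not work, and it misses the one idea that makes the proposition true. The paper never tries to inject all of $\mathbb{Z}\times\mathbb{N}$ into a thin set by variable-length block coding. Instead it uses the $\omega$-cover to re-index the domain: setting $J_{i}=\{n\,:\,T^{n}x\in A_{i}\}$, every $n$ lies in infinitely many $J_{i}$, so $(n,j)\mapsto(n,i(n,j))$, with $i(n,j)$ the $j$-th index $i$ for which $n\in J_{i}$, is a bijection of $\mathbb{Z}\times\mathbb{N}$ onto $\{(n,i)\,:\,n\in J_{i}\}$; one then sets $F_{x}(n,j)=f_{i(n,j)}^{x}(n)$, where $f_{i}^{x}:J_{i}\rightarrow\mathbb{Z}\setminus\bigcup_{j<i}\im(f_{j}^{x})$ are equivariant injections supplied by Lemma \ref{lem:equivarian-injections} (resp.\ Lemma \ref{lem:equivariant-injections-uniform-versions} in case (b)). The density hypotheses are used exactly there: $\overline{s}(J_{i})=\overline{s}(x,A_{i})$ and $\sum_{i}\overline{s}(J_{i})<1$ guarantee that at each stage the complement of the previously used images still has lower density exceeding $\overline{s}(J_{i})$, so the recursion closes. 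Crucially, $i(n,j)$ is determined locally by which sets $A_{i}$ contain $T^{n}x$, so equivariance of $F$ is automatic.

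Your replacement scheme fails on equivariance. Assigning ``the $j$-th block of $M(x)$'' to ``all not-yet-assigned pairs $(i,n)$ with $|i|+n\leq j$'' requires both an enumeration of the blocks of a bi-infinite marker sequence starting from a first one and a reference to the origin $i=0$; neither survives the required identity $F_{Tx}(i,n)=F_{x}(i+1,n)-1$ (the set $\{(i,n)\,:\,|i|+n\leq j\}$ is not shift-invariant, and ``not yet assigned'' has no meaning without a distinguished starting block). Proposition \ref{prop:two-symbol-AP-factor} also only supplies markers with \emph{bounded} gaps, not gaps tending to infinity. Separately, your source of the writing region under (a) is broken: since $\alpha$ is an $\omega$-cover, $\bigcup_{i=1}^{\infty}J_{i}=\mathbb{Z}$, so the sets $\mathbb{Z}\setminus\bigcup_{i\leq N}J_{i}$ decrease to the empty set and there is no single positive-density ``free region'' to feed into Lemma \ref{lem:selecting-a-sequence-of-subsets}. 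The fix is not to hunt for free space at all but to adopt the re-indexing above, which converts the $\omega$-cover hypothesis directly into the allocation.
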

One can prove many variants using other conditions than (a) or (b),
but these are the ones we will need.
\begin{proof}
We shall show how to construct an allocation from an $\omega$-cover
$\alpha=\{A_{i}\}$ satisfying one of the hypotheses of the proposition.
The proposition then follows from Proposition \ref{prop:allocation-implies-generator-mod-subalgebra}.

We begin with case (a). Fix $x\in X$, and suppress it in the notation
below except when needed, in which case it is indicated with a superscript.
Let 
\[
J_{i}=J_{i}^{x}=\{n\in\mathbb{Z}\,:\,T^{n}x\in A_{i}\}
\]
Note that $x\mapsto J_{i}$ is equivariant and that, since $\alpha$
is an $\omega$-cover, every $n\in\mathbb{Z}$ belongs to infinitely
many of the $J_{i}$. 

We next want to define injections $f_{i}^{x}:J_{i}\rightarrow\mathbb{Z}\setminus\bigcup_{j<i}\im(f_{j}^{x})$
so that $x\mapsto f_{i}^{x}$ is measurable and equivariant. Assuming
we have done this, given $n\in\mathbb{Z}$ and $j\in\mathbb{N}$ let
$i(n,j)$ denote the $j$-th index $i$ such that $n\in J_{i}$ (which
is well defined since $n$ belongs to infinitely many of the sets
$J_{i}$), and define 
\[
F_{x}(n,j)=f_{i(n,j)}^{x}(n)
\]
Since the images of the $f_{i}^{x}$'s are disjoint, $F_{x}\in\inj(\mathbb{Z}\times\mathbb{N},\mathbb{Z})$.
Clearly $x\mapsto F_{x}$ is measurable. To see that it is equivariant,
note that $i^{Tx}(n,j)=i^{x}(n+1,j)$, because $J_{i}=J_{i}-1$, so
using equivariance of $x\mapsto f_{i}^{x}$, 
\[
F_{Tx}(n,j)=f_{i^{Tx}(n,j)}^{Tx}(n)=f_{i^{x}(n+1,j)}^{Tx}(n)=f_{i^{x}(n+1,j)}^{x}(n+1)-1=F_{x}(n+1,j)-1
\]
So $x\mapsto F_{x}$ is an allocation.

It remains to construct the $f_{i}^{x}$. By Weiss's countable generator
theorem \cite{Weiss1989} (for the version which does not exclude
a $\mathcal{W}$-set see \cite[Theorem 5.4]{JacksonKechrisLouveau2002}
or \cite[Corollary 7.6]{Tserunyan2015}), we may assume that $X\subseteq\Sigma_{AP}^{\mathbb{Z}}$
for a countable alphabet $\Sigma$, and $T$ is the shift map. Choose
$s_{i}=s_{i}(x)\in(0,1)$ measurably satisfying $\overline{s}(J_{i})<s_{i}<1-\sum_{j<i}s_{i}$
and $\sum_{i=1}^{\infty}s_{i}<1$, which can be done because of the
hypothesis $\sum_{i=1}^{\infty}\overline{s}(J_{i})<1$. Now for $i=1,2,\ldots$
apply Lemma \ref{lem:equivarian-injections} inductively to $(x,J_{i},\mathbb{Z}\setminus\bigcup_{j<i}\im(f_{j}^{x}))$
and $s_{i}$. We can do this because by induction we have 
\begin{equation}
\overline{s}(J_{i})<1-\sum_{j<i}s_{i}<1-\sum_{j<i}\overline{s}(\im(f_{j}))\leq\underline{s}(\mathbb{Z}\setminus\bigcup_{j<i}(\im(f_{j}))\label{eq:remaining-density-estimate}
\end{equation}
and the construction can be carried through.

The construction of $F_{x}$ under assumption (b) follows the same
lines with some minor changes. There is no need to introduce the $s_{i}$,
but rather proceed directly, using Lemma \ref{lem:equivariant-injections-uniform-versions}
to construct the maps, which will have bounded displacement, and Lemma
\ref{lem:bounded-displacement-preserves-uniform-densities} to control
the density of the images. At stage $i$, note that there is some
$N_{i}=N_{i}(x)$ such that 
\begin{equation}
\bigcup_{j<i}\im f_{j}=\bigcup_{u=1}^{N_{i}}\left(\bigcup_{j<i,j\in I_{u}}\im f_{j}\right).\label{eq:100}
\end{equation}
For each $u$, as $j$ ranges over $j\in I_{u}$, the domains $J_{j}$
of $f_{j}^{x}$ are disjoint, so we can define 
\[
\widetilde{f}_{i,u}=\widetilde{f}_{i,u}^{x}=\bigcup_{j<i,j\in I_{u}}f_{j}.
\]
As the union of finitley many maps with bounded displacement, this
map has the same property. Thus, noting that 
\[
\im\widetilde{f}_{i,u}=\bigcup_{j<i,j\in I_{u}}\im f_{j},
\]
and using Lemma \ref{lem:bounded-displacement-preserves-uniform-densities},
by (\ref{eq:100}), we have
\[
\overline{s}^{*}(\bigcup_{j<i}\im f_{j})\leq\sum_{u=1}^{N_{i}}\overline{s}^{*}(\im\widetilde{f}_{i,u})=\sum_{u=1}^{N_{i}}\overline{s}^{*}(\bigcup_{j<i,j\in I_{u}}\dom f_{j})\leq\sum_{u=1}^{N_{i}}\overline{s}^{*}(\bigcup_{j\in I_{u}}J_{j})
\]
(note that the first inequality is valid since the sum is actually
over finitely many $u$). But recalling $J_{j}=\{n\,:\,T^{n}x\in A_{j}\}$
and the definition of the sets $I_{u}$ in assumption (b) of the proposition,
the last sum is less than $1-\overline{s}^{*}(J_{i})$, so we have
\[
\overline{s}^{*}(J_{i})<1-\overline{s}^{*}(\bigcup_{j<i}\im(f_{j}^{x}))\leq\underline{s}^{*}(\mathbb{Z}\setminus\bigcup_{j<i}(\im(f_{j}^{x}))
\]
Thus, Lemma \ref{lem:equivariant-injections-uniform-versions} lets
the construction proceed, and finishes the proof.
\end{proof}
In the following sections we show that, given a Borel system $(X,\mathcal{B},T)$
without invariant probability measures, one can partition $X$ into
two measurable invariant sets (modulo $\mathcal{W}$) such that the
first admits an $\omega$-cover satisfying condition (a) of the proposition
above, and the second admits an $\omega$-cover satisfying condition
(b). Clearly if a system admits an invariant measure then no such
partition can exist. Thus, we have arrived at another \,characterizations
of Borel systems without invariant probability measures. It would
be nicer to eliminate the need to partition the space: perhaps there
is always an $\omega$-cover (modulo $\mathcal{W}$) that satisfies
(a) (or that satisfies (b)), but we have not been able to show this.

\subsection{\label{sub:Generators-for-unions-of-nul-and-div-sets}Generators
for unions of $\deficient(\alpha)$, $\nul(A_{i})$s and $\divergent(A_{i})$s}

Our strategy, as explained in the introduction, is to divide $X$
into sets of points that are null or divergent for countably many
sets $A_{i}$, or deficient for some partition $\alpha$. We now indicate
how to modify these sets so as to obtain a partition of $X$ into
finitely many sets of the same forms. The following is elementary:
\begin{lem}
\label{lem:nul-and-div-minus-invariant-set}Let $A,B\in\mathcal{B}$
and assume that $B$ is $T$-invariant. Then $\nul(A)\setminus B=\nul(A\setminus B)$
and $\divergent(A)\setminus B=\divergent(A\setminus B)$.
\end{lem}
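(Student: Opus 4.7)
The plan is to exploit the single observation that $T$-invariance of $B$ forces $T$-invariance of $X\setminus B$, so for any $x\notin B$ the entire forward and backward orbit stays outside $B$. This immediately gives the identity $1_A(T^n x)=1_{A\setminus B}(T^n x)$ for every $n\in\mathbb{Z}$, hence
\[
S_n(x,A)=S_n(x,A\setminus B)\quad\text{for all }x\notin B,\ n\geq 1.
\]
In particular $s$, $\overline{s}$, $\underline{s}$ with respect to $A$ and with respect to $A\setminus B$ coincide on $X\setminus B$, and for such $x$ the orbit meets $A$ iff it meets $A\setminus B$, so $x\in\bigcup_n T^n A$ iff $x\in\bigcup_n T^n(A\setminus B)$.

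For the null identity, the containment $\nul(A)\setminus B\subseteq\nul(A\setminus B)$ is then immediate from these equalities. For the reverse containment, I note that $A\setminus B\subseteq X\setminus B$, and since $X\setminus B$ is $T$-invariant we get $\bigcup_n T^n(A\setminus B)\subseteq X\setminus B$; hence any $x\in\nul(A\setminus B)$ automatically satisfies $x\notin B$, and the displayed identity of partial sums then gives $x\in\nul(A)$.

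For the divergent identity, one direction is again immediate from the identity of partial sums. For the reverse, I observe that if $x\in B$ then $T^n x\in B$ for all $n$, so $T^n x\notin A\setminus B$ and $S_n(x,A\setminus B)\equiv 0$, which is convergent; therefore any $x\in\divergent(A\setminus B)$ must lie in $X\setminus B$, and divergence of $S_n(x,A\setminus B)$ transfers to divergence of $S_n(x,A)$.

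There is really no obstacle here; the only thing to be careful about is the extra membership clause $x\in\bigcup_n T^n A$ in the definition of $\nul(A)$, which is precisely handled by the $T$-invariance of $X\setminus B$ noted above.
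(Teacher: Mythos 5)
Your proof is correct; the paper omits the argument entirely (it labels the lemma ``elementary''), and what you have written is exactly the intended verification: $T$-invariance of $X\setminus B$ makes the visit counts to $A$ and $A\setminus B$ coincide off $B$, and forces both $\nul(A\setminus B)$ and $\divergent(A\setminus B)$ to lie in $X\setminus B$. Nothing is missing.
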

As an immediate consequence, we have
\begin{lem}
\label{lem:reducing-unions-of-nul-and-div-sets}Let $A_{1},A_{2},\ldots\in\mathcal{B}$
and set $D_{1}=A_{1}$ and $D_{n}=A_{n}\setminus\bigcup_{i=-\infty}^{\infty}T^{i}D_{n-1}$.
Then 
\[
\bigcup_{n=1}^{\infty}\nul(A_{i})=\nul(\bigcup_{n=1}^{\infty}D_{n})
\]
and similarly if we replace $\nul($$\cdot)$ by $\divergent(\cdot)$. 
\end{lem}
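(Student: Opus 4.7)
The plan is to apply Lemma \ref{lem:nul-and-div-minus-invariant-set} iteratively, combined with an orbit-disjointness observation. I would first rewrite the recursion in a more tractable form: setting $B_n = \bigcup_{i\in\mathbb{Z}} T^i\bigl(\bigcup_{k<n} D_k\bigr)$, which is $T$-invariant, the recursion identifies $D_n = A_n \setminus B_n$ (reading it so that the orbit saturation of all previously constructed $D_k$'s is subtracted, not merely $D_{n-1}$). Lemma \ref{lem:nul-and-div-minus-invariant-set} then immediately yields $\nul(D_n) = \nul(A_n) \setminus B_n$, and analogously $\divergent(D_n) = \divergent(A_n) \setminus B_n$.

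The key observation I would establish next is orbit-disjointness: since $D_n \cap B_n = \emptyset$ and $B_n$ is $T$-invariant, the orbit saturation $\bigcup_i T^i D_n$ is disjoint from $\bigcup_i T^i D_k$ for every $k<n$, so the saturations are pairwise disjoint across $n$. This makes the frequency of visits to $\bigcup_n D_n$ very simple: for any $x \in \bigcup_i T^i \bigl(\bigcup_n D_n\bigr)$ the orbit of $x$ meets exactly one $D_{n(x)}$, and hence
\[
s(x,\textstyle\bigcup_n D_n) = s(x,D_{n(x)}).
\]
Consequently $\nul(\bigcup_n D_n)=\bigsqcup_n \nul(D_n)$, and combining with the first step gives
\[
\nul(\textstyle\bigcup_n D_n) \;=\; \bigsqcup_n \bigl(\nul(A_n) \setminus B_n\bigr).
\]

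To conclude, I would verify that this disjoint union coincides with $\bigcup_n \nul(A_n)$ by a first-occurrence/telescoping argument: every point of $\bigcup_n \nul(A_n)$ admits a minimal index $n$ witnessing its membership, and for that index it lies in $\nul(A_n)\setminus B_n$. The divergent case requires no new idea, invoking the second clause of Lemma \ref{lem:nul-and-div-minus-invariant-set} together with the identical disjointness argument. I expect the only real obstacle to be parsing the recursion correctly, since the orbit-disjointness of the $D_n$'s --- which is the backbone of the decomposition --- relies on removing the orbit saturation of \emph{all} prior $D_k$'s at each stage. Once that reading is in place the proof is a routine combination of the previous lemma with the telescoping disjointification.
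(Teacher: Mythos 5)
The paper offers no proof of this lemma at all (it is presented as an ``immediate consequence'' of Lemma \ref{lem:nul-and-div-minus-invariant-set}), and most of your argument is surely the intended one: your disjointified reading of the recursion, the pairwise disjointness of the saturations $\bigcup_{i}T^{i}D_{n}$, and the resulting identity $\nul(\bigcup_{n}D_{n})=\bigsqcup_{n}\nul(D_{n})=\bigsqcup_{n}(\nul(A_{n})\setminus B_{n})$ are all correct. The gap is in your last step. You claim that if $m$ is the least index with $x\in\nul(A_{m})$, then $x\in\nul(A_{m})\setminus B_{m}$. This is false: $x$ may lie in $\bigcup_{i}T^{i}D_{k}$ for some $k<m$ without lying in $\nul(A_{k})$, because belonging to the orbit-saturation of $A_{k}$ carries no information about the frequency $s(x,A_{k})$. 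Concretely, in $\{0,1,2\}^{\mathbb{Z}}$ with the shift take $A_{1}=[1]$, $A_{2}=[2]$, $A_{n}=\emptyset$ for $n\geq3$, and let $x_{0}=2$, $x_{i}=1$ for odd $i$, $x_{i}=0$ otherwise. Then $s(x,A_{1})=\frac{1}{2}$ and $s(x,A_{2})=0$ with $x\in A_{2}$, so $x\in\nul(A_{2})$; but $D_{1}=A_{1}$, the orbit of $x$ misses $D_{2}=A_{2}\setminus\bigcup_{i}T^{i}A_{1}$ entirely, and hence $s(x,D_{1}\cup D_{2})=s(x,A_{1})=\frac{1}{2}$, so $x\notin\nul(\bigcup_{n}D_{n})$. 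Thus the inclusion $\bigcup_{n}\nul(A_{n})\subseteq\nul(\bigcup_{n}D_{n})$ --- precisely the direction needed for Lemma \ref{lem:disjointifying-nul-and-div-sets} --- genuinely fails: the lemma as printed is false, so your final step is not a repairable detail but the actual location of the error.

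The fix is to first replace each $A_{n}$ by $A_{n}\cap\nul(A_{n})$ before running the recursion. This is harmless: $\nul(A_{n})$ is $T$-invariant, so Lemma \ref{lem:nul-and-div-minus-invariant-set}, applied with the invariant set $X\setminus\nul(A_{n})$, gives $\nul(A_{n}\cap\nul(A_{n}))=\nul(A_{n})$. After this replacement every point of $\bigcup_{i}T^{i}D_{k}$ lies in the invariant set $\nul(A_{k})$ and therefore satisfies $s(\cdot,D_{k})=0$, so your first-occurrence argument goes through once you take the witnessing index to be the unique $k$ with $x\in\bigcup_{i}T^{i}D_{k}$ if such a $k$ exists, and $k=m$ otherwise; the rest of your proof is unchanged. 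The identical modification, replacing $A_{n}$ by $A_{n}\cap\divergent(A_{n})$, is needed in the divergent case. You were right to flag the parsing of the recursion as a delicate point, but the real obstacle sits one step later.
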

Thus, noting that $\deficient(\alpha)$ is invariant, we have
\begin{lem}
\label{lem:disjointifying-nul-and-div-sets}Let $\alpha\subseteq\mathcal{B}$
and $A_{i},B_{i}\in\mathcal{B}$ and suppose that 
\[
X=\deficient(\alpha)\cup\bigcup_{i=1}^{\infty}\nul(A_{i})\cup\bigcup_{i=1}^{\infty}\divergent(B_{i})
\]
Then there are sets $A,B\in\mathcal{B}$ such that $X=\deficient(\alpha)\cup\nul(A)\cup\divergent(B)$
and the union is disjoint.\end{lem}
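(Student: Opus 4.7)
The plan is to first collapse the two countable unions into single sets using Lemma \ref{lem:reducing-unions-of-nul-and-div-sets}, and then make the resulting three pieces disjoint by repeated application of Lemma \ref{lem:nul-and-div-minus-invariant-set}. The only observation needed beyond the two preceding lemmas is that $\nul(A)$ and $\divergent(B)$ are themselves $T$-invariant for every $A,B\in\mathcal{B}$, which follows immediately from the definitions since both the condition $x\in\bigcup_{n\in\mathbb{Z}}T^{n}A$ and the existence/value of the forward Cesaro average $s(x,A)$ are preserved by $T$ (a one-step shift changes $S_n$ by at most $2/n$).

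First I would apply Lemma \ref{lem:reducing-unions-of-nul-and-div-sets} to the sequences $(A_i)$ and $(B_i)$ separately to obtain sets $A',B'\in\mathcal{B}$ with
\[
\bigcup_{i=1}^{\infty}\nul(A_{i})=\nul(A')\qquad\text{and}\qquad\bigcup_{i=1}^{\infty}\divergent(B_{i})=\divergent(B').
\]
So the hypothesis becomes $X=\deficient(\alpha)\cup\nul(A')\cup\divergent(B')$, and we now have only three sets to disjointify.

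Next I would peel off the pieces in order. Set $D=\deficient(\alpha)$, which is $T$-invariant by hypothesis. Let $A=A'\setminus D$; since $D$ is invariant, Lemma \ref{lem:nul-and-div-minus-invariant-set} gives $\nul(A)=\nul(A')\setminus D$, and this set is itself $T$-invariant by the observation above. Now set $B=(B'\setminus D)\setminus\nul(A)$. Applying Lemma \ref{lem:nul-and-div-minus-invariant-set} twice (first with the invariant set $D$, then with the invariant set $\nul(A)$) yields
\[
\divergent(B)=\divergent(B')\setminus\bigl(D\cup\nul(A)\bigr).
\]

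Finally I would verify that $X=D\cup\nul(A)\cup\divergent(B)$ is a disjoint union. Disjointness is built in by construction: $\nul(A)\subseteq X\setminus D$, and $\divergent(B)\subseteq X\setminus(D\cup\nul(A))$. For the covering property, any $x\in X$ lies in $D$, in $\nul(A')$, or in $\divergent(B')$; in the first case $x\in D$, in the second case $x\in D$ or $x\in\nul(A')\setminus D=\nul(A)$, and in the third case $x\in D\cup\nul(A)$ or else $x\in\divergent(B')\setminus(D\cup\nul(A))=\divergent(B)$. I do not anticipate any substantive obstacle; the argument is entirely bookkeeping, and the only subtlety is making sure the set subtracted at each stage is invariant so that Lemma \ref{lem:nul-and-div-minus-invariant-set} applies, which is exactly why we process $\deficient(\alpha)$ first and $\nul(A)$ second.
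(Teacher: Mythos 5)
Your proof is correct and follows essentially the same route as the paper's: both rest on Lemma \ref{lem:reducing-unions-of-nul-and-div-sets} to collapse the countable unions and Lemma \ref{lem:nul-and-div-minus-invariant-set} to peel off the invariant sets $\deficient(\alpha)$ and then $\nul(A)$; the only difference is that the paper subtracts before collapsing while you collapse first, which changes nothing. Your explicit check that $\nul(A)$ is $T$-invariant is a point the paper already records just before stating the lemma.
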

\begin{proof}
By Lemma \ref{lem:nul-and-div-minus-invariant-set} we can replace
each $A_{i}$ by $A_{i}\setminus\deficient(\alpha)$ and the hypothesis
remains. Use the previous lemma to find $A\in\mathcal{B}$ such that
$\bigcup_{n=1}^{\infty}\nul(A_{i})=\nul(A)$. By the same reasoning
we can replace $B_{i}$ with $B_{i}\setminus(\deficient(\alpha)\cup\nul(A))$
without affecting the hypothesis and find $B\in\mathcal{B}$ with
$\bigcup_{i=1}^{\infty}\divergent(B_{i})=\divergent(B)$. But note
that  $\deficient(\alpha),\nul(A)$ and $\divergent(B)$ are pairwise
disjoint and their union is $X$, as desired.
\end{proof}

\subsection{\label{sub:From-finite-to-2-set-generators}From finite to two-set
generators}

As explained in the introduction, most of the work in the proof of
Theorem \ref{thm:main} goes towards proving the following theorem:
\begin{thm}
\label{thm:enough-to-find-K-set-generator}There is a natural number
$K$ such that every Borel system without invariant probability measures
admits a $K$-set generator.
\end{thm}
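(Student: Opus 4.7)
The plan follows the outline given in the introduction. First, I apply Nadkarni's theorem to obtain a sweeping-out set $D$, a partition $\{D_n\}_{n=1}^{\infty}$ of $X$, and integers $n_1, n_2, \ldots$ with the sets $T^{n_i} D_i$ pairwise disjoint and contained in $X \setminus D$. Let $\mathcal{A}$ denote the countable algebra generated by $\{D, D_1, D_2, \ldots\}$ and set $\alpha = \{D_i\}$. As computed in the introduction, every $x \in X$ must be either null for some $A \in \mathcal{A}$, divergent for some $A \in \mathcal{A}$, or $\alpha$-deficient: otherwise the forward averages $\mu_x(A) = s(x,A)$ would define a $T$-invariant finitely additive probability on $\mathcal{A}$, contradicting the inequality $\sum_i \mu_x(D_i) \leq 1 - \mu_x(D) < 1$ forced by disjointness of the $T^{n_i} D_i$.

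Since $\mathcal{A}$ is countable, this yields a countable cover of $X$ consisting of sets of the form $\nul(A)$ and $\divergent(A)$ together with the single invariant set $\deficient(\alpha)$. I then apply Lemma~\ref{lem:disjointifying-nul-and-div-sets} to collapse this cover into a disjoint union of three $T$-invariant Borel sets
\[
X = \deficient(\alpha) \sqcup \nul(A^*) \sqcup \divergent(B^*).
\]

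The remaining step is to invoke three separate generator theorems, to be proved in Sections~\ref{sec:A-generator-theorem-for-nul-sequences}, \ref{sec:A-generator-theorem-for-div-points}, and \ref{sec:A-generator-theorem} respectively, each asserting that the restriction of $T$ to the corresponding invariant set admits a finite Borel generator whose size is bounded by some universal constant $k$. Because the three pieces are pairwise disjoint and each is $T$-invariant, I can relabel the three partitions using three pairwise-disjoint alphabets of size $k$ and take their union to produce a partition of $X$ into at most $3k$ Borel sets whose $T$-iterates generate $\mathcal{B}$. Setting $K = 3k$ concludes the argument.

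The assembly step above is straightforward; the real difficulty lies entirely in the three generator theorems that it consumes. I expect the deficient case to be the main obstacle: it relies on entropy considerations and requires a Krieger-type theorem phrased purely in terms of empirical frequencies rather than an invariant measure. The divergent case is also nontrivial and calls for a new source-coding construction adapted to sequences whose statistics fail to converge. The null case, while closer to classical constructions for infinite-measure-preserving systems, still has to be executed in the Borel category. All three will rest on the infrastructure of Section~\ref{sec:General-strategy}, in particular Proposition~\ref{prop:general-strategy}, which reduces the task to exhibiting an $\omega$-cover whose frequencies (upper ordinary in case (a), upper uniform in case (b)) satisfy the appropriate summability bound.
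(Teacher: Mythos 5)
Your proposal is correct and follows essentially the same route as the paper: Nadkarni's theorem produces the algebra $\mathcal{A}$ and partition $\alpha$, the trichotomy into null/divergent/deficient points is established exactly as in the paper's introduction, Lemmas \ref{lem:reducing-unions-of-nul-and-div-sets} and \ref{lem:disjointifying-nul-and-div-sets} give the disjoint invariant decomposition, and the three generator theorems (Theorems \ref{thm:generator-for-nul}, \ref{thm:generator-for-div}, \ref{thm:generator-for-def}, yielding $4$, $4$, and $16$ sets respectively) are combined over disjoint alphabets to give a universal $K$. Your assessment of where the real work lies (the deficient case, via the empirical Krieger theorem, with the divergent case resting on the finitary Bishop upcrossing bound) also matches the paper's architecture.
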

This is good enough to get two-set generators, because
\begin{prop}
\label{prop:reduction-from-K-set-to-2-set}Theorem \ref{thm:enough-to-find-K-set-generator}
implies Theorem \ref{thm:main}. Furthermore the generator may be
chosen so that the itineraries lie in a given mixing non-trivial shift
of finite type.\end{prop}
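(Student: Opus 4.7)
The plan is to apply Theorem \ref{thm:enough-to-find-K-set-generator} to a suitable induced transformation $T_C$ on a sweeping-out set $C \subseteq X$ with bounded return times, and then encode the $K$-symbol $T_C$-itinerary within each interval between consecutive visits to $C$. First, given $N$ to be chosen later (large enough that the encoding fits into the target SFT), I would embed $X$ in $\Sigma^{\mathbb{Z}}_{AP}$ using Weiss's countable generator theorem and then apply the marker construction of Proposition \ref{prop:two-symbol-AP-factor} to obtain an equivariant Borel sweeping-out set $C$ whose consecutive return times all lie in $\{N, N+1\}$. Because these return times are uniformly bounded, Kakutani's skyscraper formula turns any $T_C$-invariant probability $\nu$ on $C$ into a finite $T$-invariant measure of total mass $\int r\,d\nu \leq N+1$; after normalization this would give a $T$-invariant probability, contradicting our hypothesis. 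Hence $(C, T_C)$ admits no invariant probability measure, and Theorem \ref{thm:enough-to-find-K-set-generator} supplies a $K$-set Borel generator $\gamma = \{C_1, \ldots, C_K\}$ for $T_C$.

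The second step is to encode $\gamma_*$ block by block into a prescribed mixing non-trivial SFT $Y \subseteq \Sigma^{\mathbb{Z}}$ (for the plain two-set statement one takes $Y = \{0,1\}^{\mathbb{Z}}$). Because $Y$ is mixing with positive topological entropy, for all sufficiently large $N$ there exist a self-compatible synchronizing word $w \in L(Y)$ (meaning $ww \in L(Y)$) and, for each $\ell \in \{N, N+1\}$, a list of $K$ distinct ``codewords'' $v^{(\ell)}_1, \ldots, v^{(\ell)}_K \in L_\ell(Y)$, each of which starts and ends with $w$, contains no internal occurrence of $w$, and can be freely concatenated with any other such codeword while staying in $L(Y)$. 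Then define $\pi : X \to \Sigma^{\mathbb{Z}}$ equivariantly by writing, for every $c \in C$ with next return time $r = r(c)$, the codeword $v^{(r)}_{\gamma(c)}$ into the coordinates of $\pi(x)$ corresponding to the orbit segment $c, Tc, \ldots, T^{r-1}c$. Measurability and equivariance are immediate, and injectivity follows from synchronization: from $\pi(x)$ one locates the $w$-occurrences, thereby recovers the positions of $C$-visits in the orbit of $x$, reads off the full $T_C$-itinerary $\gamma_*(c(x))$ and the offset $\psi(x)$ of $x$ from its most recent visit $c(x)$, and combines these to reconstruct $x$, since $\gamma$ is a $T_C$-generator. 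The partition $\{\pi^{-1}([a])\}_{a \in \Sigma}$ is therefore a generator with itineraries in $Y$; taking $|\Sigma|=2$ gives Theorem \ref{thm:main}.

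The main obstacle is the combinatorial construction of the codeword system inside a prescribed mixing SFT $Y$: one must arrange simultaneously that (a) every codeword lies in $L(Y)$, starts and ends with $w$, and is freely concatenable with any other codeword, (b) $w$ is unambiguously detectable because it never occurs as an internal subword of any codeword, and (c) at least $K$ codewords are available at each of the two possible block lengths $N$ and $N+1$. For the full shift $\{0,1\}^{\mathbb{Z}}$ this is straightforward once $N$ exceeds $\log_2 K$ by a constant. For a general mixing SFT $Y$ one passes to a graph presentation, chooses a vertex $s$ through which sufficiently long closed walks are abundant, and takes $w$ to correspond to a long pilot loop at $s$; the number of length-$\ell$ closed walks at $s$ avoiding internal occurrences of $w$ still grows exponentially in $\ell$ by standard SFT combinatorics, so choosing $N$ large provides the required $K$ codewords at each length. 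No new dynamical machinery is needed at this stage, so the technical weight of Theorem \ref{thm:main} really sits with Theorem \ref{thm:enough-to-find-K-set-generator}.
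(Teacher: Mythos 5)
Your proposal is correct and follows essentially the same route as the paper: induce on a marker-derived sweeping-out set with return times in $\{N,N+1\}$, observe that the induced map inherits the absence of invariant probability measures because the roof function is bounded, apply the $K$-set generator theorem to the induced system, and then encode each $T_C$-symbol into the orbit segment before the next return using a recognizable delimiting word (the paper uses the prefix $1^{N/2}0$ followed by a binary code, and for a general SFT $Y$ a small set of freely concatenable words of two adjacent lengths). The only cosmetic difference is your insistence that codewords both begin and end with the synchronizing word $w$, which is harmless but slightly redundant compared with marking only block beginnings.
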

\begin{proof}
We first prove the existence of a two-set generator without requirements
on the itineraries. The proof is basically a variant of Abramov's
formula for entropy of an induced map. Taking a set $A$ with large
but bounded return times, the induced map will not have invariant
probability  measures (because such a measure would lift to one on
$X$), and so has a $K$-set generator, which can be converted to
a $2$-set generator of $X$ by coding each symbol in the space between
returns to $A$.

Here is the detailed proof. Fix $(X,\mathcal{B},T)$ without invariant
probability measures. By hypothesis we can assume that $X\subseteq\Sigma_{AP}^{\mathbb{Z}}$
for $\Sigma=\{1,\ldots,K\}$, with $T$ being the shift. 

Let $N=4+2\left\lceil \log_{2}K\right\rceil $ and let $\pi:X\rightarrow\{0,1\}^{\mathbb{Z}}$
be an equivariant measurable map into $N$-markers, as provided by
Lemma \ref{prop:two-symbol-AP-factor}. 

Let $A=\{x\in X\,:\,\pi(x)_{0}=1\}$ and $r_{A}(x)=\min\{n>0\,:\,T^{n}x\in A\}$
the entrance time map. By the $N$-marker property, $r_{A}(x)\leq N+1$
for every $x\in X$, and in particular every forward orbit meets $A$.
Let $T_{A}(x)=T^{r_{A}(x)}x$ denote the induced map on $A$ and consider
the induced system $(A,\mathcal{B}|_{A},T_{A})$. Then $(X,\mathcal{B},T)$
is isomorphic to the suspension of $(A,\mathcal{B}|_{A},T_{A})$ with
the bounded roof function $r_{A}$.

If $(A,\mathcal{B}|_{A},T_{A})$ admitted a finite invariant measure
then the measure could be lifted to the suspension, and the result
would be a finite measure because the roof function is bounded, giving
a finite invariant measure on $(X,\mathcal{B},T)$. This is impossible,
so by our hypothesis, $(A,\mathcal{B}|_{A},T_{A})$ admits a $K$-set
generator $\alpha$.

We next define a measurable equivariant map $\widetilde{\pi}:X\rightarrow\{0,1\}^{\mathbb{Z}}$.
Fix $x\in X$ and $i$ with $\widetilde{\pi}(x)_{i}=1$. Set $\widetilde{\pi}(x)_{i}=\widetilde{\pi}(x)_{i+1}=\ldots=\widetilde{\pi}(x)_{N/2}=1$
and $\widetilde{\pi}(x)_{1+N/2}=0$. Then in the next $N/2-1$ symbols
of $\widetilde{\pi}(x)$ write a binary string identifying $\alpha(T^{i}x)$,
using some fixed coding of the elements of $\alpha$ (we can do this
because there are $K$ possible values for $\alpha(T^{i}x)$ and $N/2-1>\log_{2}K$
available symbols). After doing this for every $i$ with $\pi(x)_{i}=1$,
set any undefined symbols in $\widetilde{\pi}(x)$ to $0$. By the
$N$-marker property the gap between $1$s in $\pi(x)$ is at least
$N$, so we have not tried to define any symbol more than once, and
$\widetilde{\pi}(x)$ is well defined. Evidently $x\mapsto\widetilde{\pi}(x)$
is measurable and equivariant.

Now, the word $1^{N/2}0$ occurs only at indices $i$ with $\pi(x)_{i}=1$,
so $\pi(x)$ can be recovered from $\widetilde{\pi}(x)$, hence given
$\widetilde{\pi}(x)$ we can find all the $i$ such that $T^{i}x\in A$.
For such an $i$, we recover $\alpha(T^{i}x)$ by reading off the
$N/2-1$ binary digits in $\widetilde{\pi}(x)$ starting at $i+N/2+2$.
Thus, $\widetilde{\pi}(x)$ determines $\alpha(T^{i}x)$ for all $i$
such that $T^{i}x\in A$, and since $\alpha$ generates for $T_{A}$,
this determines $T^{i}x$ for such $i$, and therefore determines
$x$. We have shown that $x\mapsto\widetilde{\pi}(x)$ is an injection,
completing the proof of Theorem \ref{thm:main}.

Now assume that $Y\subseteq\Lambda^{\mathbb{Z}}$ is a non-trivial
mixing shift of finite type (SFT). The modification of the previous
proof is rather standard; for definitions and basic techniques related
to SFTs can be found e.g. in \cite{LindMarcus1995}. We modify the
construction above as follows. Using the mixing property of $Y$,
choose words $a_{0},a'_{0}$ and $a_{1}$ in $Y$ such that any concatenation
of the words appears in $Y$, and every infinite concatenation has
a unique parsing into these words. Also require that the length of
$a'_{0}$ is greater by one than the length of $a_{0}$. Choose $N$
now to be large relative to the lengths of these words as well, and
proceed as before, except that when building the image $\widetilde{\pi}(x)$
we write copies of $a_{0},a'_{0}$ instead of $0$ and $a_{1}$ instead
of $1$; the choice between $a_{0},a'_{0}$ is made in such a way
that the length of the final concatenation is precisely the distance
between occurrences of visits to $A$. The remaining details are left
to the reader.
\end{proof}

\section{\label{sec:A-generator-theorem-for-nul-sequences}A generator theorem
for null points}

Recall that $x\in\nul(A)$ if $s(x,A)=0$ and $x\in\bigcup_{n=-\infty}^{\infty}T^{n}A$.
In this section we prove:
\begin{thm}
\label{thm:generator-for-nul}Let $(X,\mathcal{B},T)$ be a Borel
system and $A\in\mathcal{B}$. Then $\nul(A)$ has a 4-set generator.
\end{thm}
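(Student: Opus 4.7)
The plan is to split $\nul(A)$ into two invariant Borel pieces according as the orbit meets $A$ infinitely or only finitely often, apply Proposition \ref{prop:general-strategy}(a) on each piece to produce a generator of at most four sets there, and then glue the two generators by a labeling that lets the piece be recovered from the itinerary. Observe first that $\nul(A)$ contains no periodic points: a point $x$ of period $p$ with $x \in \bigcup_n T^n A$ would satisfy $s(x,A) \geq 1/p > 0$. Set
\[
Y_+ = \{x \in \nul(A) : \{n \in \mathbb{Z} : T^n x \in A\} \text{ is infinite}\}, \qquad Y_- = \nul(A) \setminus Y_+,
\]
both invariant Borel sets on which $T$ acts freely.

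On $Y_+$, the $\mathbb{Z}$-indexed family $\alpha_+ = \{T^n A \cap Y_+\}$ is an $\omega$-cover of $Y_+$ by definition, and shift invariance of the forward average gives $\overline{s}(x, T^n A) = s(x,A) = 0$ for all $x \in Y_+$ and all $n$, so $\sum_n \overline{s}(x, T^n A) = 0 < 1$. Since $\alpha_+ \subseteq \sigma_T(\gamma_+)$ for $\gamma_+ = \{A \cap Y_+, Y_+ \setminus A\}$, Proposition \ref{prop:general-strategy} produces a two-set partition $\beta_+$ such that $\beta_+ \vee \gamma_+$ is a generator for $(Y_+, T)$ with at most four atoms.

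On $Y_-$, every orbit meets $A$ at least once but only finitely many times, so the set $Z_0 = A \cap \{y : T^j y \notin A \text{ for all } j > 0\}$ is wandering, each $x \in Y_-$ has a unique integer $m(x)$ with $T^{m(x)} x \in Z_0$, and $Y_- = \bigsqcup_k T^{-k} Z_0$. The sets $C_n = \bigcup_{|k| \leq n} T^{-k} Z_0$ form an $\omega$-cover of $Y_-$ (since $x \in C_n \iff |m(x)| \leq n$), and the orbit of $x$ meets $C_n$ in an interval of at most $2n+1$ indices, so $\overline{s}(x, C_n) = 0$. Thus $\sum_n \overline{s}(x, C_n) = 0 < 1$, and since $\{C_n\} \subseteq \sigma_T(\gamma_-)$ for $\gamma_- = \{Z_0, Y_- \setminus Z_0\}$, another application of Proposition \ref{prop:general-strategy} yields a two-set $\beta_-$ so that $\beta_- \vee \gamma_-$ is a generator for $(Y_-, T)$ with at most four atoms.

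Finally, label the atoms of $\beta_\pm \vee \gamma_\pm$ as $E_1^\pm, \ldots, E_4^\pm$ in such a way that $E_1^+ \cup E_3^+ = A \cap Y_+$ and $E_1^- \cup E_3^- = Z_0$, and set $\alpha_i = E_i^+ \cup E_i^-$. Since $Z_0 \subseteq A$, the positions at which the itinerary $\alpha_*(x) \in \{1,2,3,4\}^\mathbb{Z}$ takes a value in $\{1,3\}$ record precisely the visits of the orbit of $x$ to $A$; there are infinitely many such positions when $x \in Y_+$ and exactly one when $x \in Y_-$. Hence $\alpha_*$ first identifies which piece $x$ inhabits and then recovers $x$ by the piece-wise injective generator, so $\alpha = \{\alpha_i\}$ is a four-set Borel generator for $\nul(A)$. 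The one point requiring care, beyond routine Borel bookkeeping, is arranging the labelings of the two pieces so that the merged itinerary map remains globally injective; the choice above accomplishes this by making $A$-visits the distinguishing feature between $Y_+$ and $Y_-$.
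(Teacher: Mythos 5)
Your proof is correct, but it reaches the $\omega$-cover hypothesis of Proposition \ref{prop:general-strategy}(a) by a different route than the paper, and consequently needs an extra gluing step that the paper avoids. The paper's proof handles all of $\nul(A)$ at once by the small trick of duplicating the sets: it takes the doubly-indexed family $A_{i,j}=T^{-i}A$ (independent of $j$), so that even a point whose orbit visits $A$ only finitely often --- indeed at least once, by the definition of $\nul(A)$ --- lies in infinitely many members of the family; all upper frequencies vanish, condition (a) holds, and $\beta\lor\{A,\nul(A)\setminus A\}$ is immediately a four-set generator. You instead split $\nul(A)$ into the infinitely recurrent part $Y_+$ (where $\{T^nA\}_{n\in\mathbb{Z}}$ is already an $\omega$-cover) and the finitely recurrent, wandering part $Y_-$ (where you build an $\omega$-cover from the last-visit set $Z_0$); both covers satisfy (a) for the same reason (all frequencies are zero), but you must then merge the two four-set generators into one, which you do correctly by arranging that the atoms labelled $\{1,3\}$ mark the $A$-visits on $Y_+$ and the $Z_0$-visits on $Y_-$, so the itinerary distinguishes the two invariant pieces (infinitely many such positions versus exactly one). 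Your observation that $\nul(A)$ is automatically aperiodic is needed in either version (to invoke Weiss's theorem inside Proposition \ref{prop:general-strategy}) and is correct. What the paper's duplication trick buys is brevity and the avoidance of any cross-piece injectivity argument; what your decomposition buys is a more transparent picture of why every point of $\nul(A)$ is covered infinitely often, at the cost of the labelling bookkeeping at the end.
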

Heuristically, this result is a Borel version of the generator theorem
for infinite invariant measures. Indeed if $\mu$ is such a measure
and $A$ is a set with $0<\mu(A)<\infty$, then by Hopf's ratio ergodic
theorem $x\in\nul(A)$ for $\mu$-a.e. $x$. In fact the theorem above
recovers (most aspects of) Krengel's generator theorem for such measures.
\begin{proof}
For $i,j\in\mathbb{Z}$ define
\[
A_{i,j}=T^{-i}A
\]
(note that this does not actually depend on $j$). Then for each $j$
the union $\bigcup_{i\in\mathbb{Z}}A_{i,j}$ includes all $x\in\nul(A)$
such that $T^{n}x\in A$ for some $n\in\mathbb{Z}$, so $\bigcup_{i\in\mathbb{N}}A_{i,j}=\nul A$.
Clearly $(A_{i,j})_{i,j\in\mathbb{N}}$ is an $\omega$-cover of $\nul(A)$.
But also $A_{i,j}=T^{-i}A$ so $\overline{s}(x,A_{i,j})\leq\overline{s}(x,A)=0$
for every $x\in\nul(A)$, hence 
\[
\sum_{i,j\in\mathbb{Z}}\overline{s}(x,A_{i,j})=\sum_{i,j\in\mathbb{Z}}0=0<1\qquad\qquad\mbox{for all }x\in\nul(A)
\]
The hypotheses of Proposition \ref{prop:general-strategy} are satisfied
for the system $(\nul(A),\mathcal{B}|_{\nul(A)},T|_{\nul(A)})$, so
there is a two-set partition $\beta$ of $\nul(A)$ such that $\sigma_{T}(\beta)\lor\sigma_{T}(\{A_{i,j}\}_{i,j\in\mathbb{N}})=\mathcal{B}|_{\nul(A)}$.
But setting $\gamma=\{A,\nul(A)\setminus A\}$, clearly $A_{i,j}\in\sigma_{T}(\gamma)$,
so $\beta\lor\gamma$ is a generating partition with four sets.
\end{proof}
We remark that, up to removing an invariant set from the wandering
ideal $\mathcal{W}$, it is possible to define a partition of $\nul(A)$
which, in a sense, is deficient. Specifically, let $\widetilde{A}_{i}\subseteq\nul(A)$
with $T^{i}x\in A$ and $T^{j}x\notin A$ for $0\leq j<i$. Then $\nul(A)\setminus\bigcup\widetilde{A}_{i}$
consists of points which do not enter $A$ in the future, but, by
definition of $\nul(A)$, enter it in the past, so $\nul(A)\setminus\bigcup\widetilde{A}_{i}\in\mathcal{W}$.
One might hope to apply our coding of deficient partitions to $\alpha=\{\widetilde{A}_{i}\}$.
Formally this is not possible, since in our definition of deficient
partitions we required positive frequencies. With some adjustment
this approach could be made to work. But, in any event, the construction
for the deficient case is far more complex than the one above, and
such a reduction would not be very enlightening.

The construction above applies to many examples of Borel systems without
invariant measures. A popular construction of such a system, for example,
is to begin with the dyadic odomometer $G$ and build the suspension
$X$ with respect to a functions that is continuous except at one
point, and has infinite integral with respect to Haar measure on the
base. In such constructions we have $X=\nul(G)$, and the short proof
above provides a generator. As noted in the introduction, we don't
know whether every Borel system without invariant probability measures
is of the form $\nul(A)$ for some measurable set $A$.

\section{\label{sec:A-generator-theorem-for-div-points}A generator theorem
for divergent points}

Our purpose in this section is to construct a finite generator for
the set $\divergent(A)$ of points which do not have well-defined
visit frequencies to $A$. The key to this is Bishop's quantitative
result on the decay of the frequency of repeated fluctuations of ergodic
averages.

\subsection{\label{sub:Bishops-theorem}Bishop's theorem}

Birkhoff's ergodic theorem states that, in a probability preserving
system, the ergodic averages of an $L^{1}$ function converge a.e..
It is well known that this convergence does not admit a universal
rate, even if one fixes the system and varies only the function. Nevertheless,
 there is an effective version of Birkhoff's theorem, due originally
to E. Bishop and subsequently extended by various authors, stated
in terms of the probability that there occur many fluctuations of
the ergodic averages across a given gap. More precisely, for a map
$T:X\rightarrow X$ and $f:X\rightarrow\mathbb{R}$, we say that $x\in X$
has $k$ upcrossings of a real interval $(a,b)\subseteq\mathbb{R}$
(w.r.t. $f$) if there is a sequence $0\leq m_{1}<n_{1}<m_{2}<n_{2}<\ldots<m_{k}<n_{k}$
such that 
\[
S_{m_{i}}(x,f)<a<b<S_{n_{i}}(x,f)\qquad\mbox{ for }i=1,\ldots,k
\]
and $S_{n}(x,f)=\frac{1}{n}\sum_{i=0}^{n-1}f(T^{i}x)$. If there is
an infinite such sequence we say there are infinitely many upcrossings.
Clearly when $X$ carries a measurable structure the set of points
with $k$ upcrossings is measurable and we can choose $m_{i}(x),n_{i}(x)$
measurably, e.g. taking the lexicographically least sequence. Bishop's
theorem reads as follows.
\begin{thm}
[Bishop \cite{B66}] Let $(X,\mathcal{B},\mu,T)$ be an ergodic probability-preserving
system and $f\in L^{1}(\mu)$. Then for every $a<b$,
\[
\mu\left(x\in X\,:\,x\mbox{ has }k\mbox{ upcrosings of }(a,b)\mbox{ (w.r.t. }f\mbox{)}\right)\leq\frac{\left\Vert f\right\Vert _{1}}{k(b-a)}
\]

\end{thm}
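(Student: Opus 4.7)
The plan is to prove Bishop's upcrossings inequality by combining Wiener's maximal ergodic theorem with an inductive stopping-time argument, in analogy with Doob's classical upcrossings inequality for martingales. First, by replacing $f$ with $f - a$ one reduces to the case $a = 0$; writing $c = b - a > 0$, it suffices to show that $\mu(U_k) \leq \|f\|_1/(kc)$, where $U_k$ is the set of $x$ with at least $k$ upcrossings of $(0,c)$ with respect to $f$.

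The base case $k = 1$ follows immediately from Wiener's maximal inequality: if $x \in U_1$ then $\sup_n S_n(x,f) > c$, and so $\mu(U_1) \leq \|f\|_1/c$. For the inductive step, I would introduce the stopping time $\tau(x) = \inf\{n : S_n(x,f) > c\}$, essentially the time of the first upcrossing. On $U_k$, the point $T^{\tau(x)} x$ must admit at least $k - 1$ further upcrossings, but these are upcrossings of the \emph{shifted} averages $S_n(T^\tau x, f)$ adjusted for the prior accumulation of mass $\sum_{j=0}^{\tau - 1} f(T^j x) > c\tau$. The key quantitative observation is that this positive mass must be ``paid back'' by substantial negative contributions from $f$ in order to permit the next downcrossing below $0$, so each upcrossing consumes a definite amount of $L^1$ mass from $f$; averaging this cost over the $k$ upcrossings produces the $1/k$ factor.

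The main obstacle is that ergodic averages are not a martingale, and the dependence of $S_n(x,f)$ on the entire initial segment $[0,n)$ prevents a clean conditioning at $\tau$ of the sort Doob uses. The standard remedy, which I would adopt, is a transference argument: one truncates to upcrossings occurring in $[0,N]$, converts the problem (via the Rota tower construction, or alternatively by sampling long orbit segments of length $N$) into a purely deterministic upcrossings inequality for partial means of a finite sequence, and then integrates against $\mu$ using $T$-invariance to bound the truncated counts by $\|f\|_1/(kc)$. A monotone convergence argument as $N \to \infty$ then yields the stated inequality. A secondary technical point is choosing the measurable versions of $m_i(x), n_i(x)$ carefully so that the resulting stopping times are themselves measurable, but this is straightforward since, as the paper notes, one may take the lexicographically least witnessing sequence.
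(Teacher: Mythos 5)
Your proposal identifies the right general framework (transference to a deterministic upcrossing inequality for partial averages of a finite sequence, then integration using $T$-invariance), but it omits the entire content of that framework: the finite combinatorial upcrossing inequality is never proved, only invoked as ``standard''. That inequality \emph{is} Bishop's theorem --- everything else (truncation to $[0,N]$, integration against $\mu$, monotone convergence in $N$) is routine. The difficulty it hides is real: for a fixed orbit segment, each starting index $i$ with $k$ upcrossings produces a nested chain of intervals $[i,i+m_1(i))\subseteq [i,i+n_1(i))\subseteq\cdots$ on which the averages alternate between being $<a$ and $>b$, and these chains overlap heavily as $i$ varies. One must extract from them \emph{disjoint} subfamilies $\mathcal{A}_1\subseteq\mathcal{B}_1\subseteq\cdots\subseteq\mathcal{A}_k\subseteq\mathcal{B}_k$ (in the sense of their unions) covering a definite fraction of the starting indices, via a Vitali-type covering argument, and only then does the alternation of averages across the nested unions yield the geometric/harmonic gain in $k$. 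This is exactly the argument the paper sketches for the finitary variant (Theorem \ref{thm:finiary-Bishop}), and it is absent from your write-up.

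The stopping-time induction you propose first does not work as stated, and you essentially concede this yourself before switching strategies. The base case via the maximal ergodic theorem is fine, but the inductive step founders precisely because $S_n(T^{\tau(x)}x,f)$ is not obtained from $S_{n+\tau(x)}(x,f)$ by any conditioning compatible with $\mu$ and $\tau$; the ``mass paid back per upcrossing'' heuristic is the right intuition but is not a proof, and making it rigorous leads you back to the covering argument. A smaller point: the reduction to $a=0$ by replacing $f$ with $f-a$ changes $\left\Vert f\right\Vert _{1}$ to $\left\Vert f-a\right\Vert _{1}$, so it does not literally yield the stated bound $\left\Vert f\right\Vert _{1}/(k(b-a))$; you need to either track the constant through the translation or run the argument with the original $a<b$.
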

The point is that the rate of decay is universal, depending only on
the magnitude of the gap and the norm of $f$ (this normalization
or one like it is unavoidable in order for the rate to be invariant
under scaling of $f$ and $a,b$).

What we need is not precisely the last theorem, but a finitistic variant
that is used its proof. We give the statement for indicator functions.
Given $T:X\rightarrow X$, a set $A\subseteq X$, write
\[
U_{a,b,k,N}=\{x\in X\,:\,x\mbox{ has }k\mbox{ upcrossings of }(a,b)\mbox{ w.r.t. }1_{A}\mbox{ up to time }N\}
\]
($k$ upcrossings up to time $N$ means that we can choose the times
$m_{1},n_{1},\ldots,m_{k},n_{k}$ in the definition with $n_{k}\leq N$).
\begin{thm}
\label{thm:finiary-Bishop}Let $T:X\rightarrow X$ be a map and $A\subseteq X$.
For every $k$, every $a<b$, every $N$ and every $x\in X$, 
\[
\overline{s}^{*}(x,U_{a,b,k,N})<\frac{2}{k(b-a)}
\]

\end{thm}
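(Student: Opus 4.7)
My plan is to treat the statement as a finitistic, pointwise version of Bishop's classical upcrossing inequality, as the paragraph introducing it already indicates; the proof is the combinatorial core of Bishop's argument, transposed from a probability-preserving setting to a single orbit with no invariant measure.

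\medskip
\noindent\emph{Setup.} Fix $x \in X$, and assume without loss of generality that $0 \le a < b \le 1$, since otherwise $U_{a,b,k,N}$ is empty (as $S_n(T^i x,1_A) \in [0,1]$). Let $F(\ell) := \#\{0 \le j < \ell : T^j x \in A\}$, extended to $\ell < 0$ in the natural way, so that $S_n(T^i x, 1_A) = (F(i+n) - F(i))/n$. Unpacking the definitions, $T^i x \in U_{a,b,k,N}$ iff there exist integers $0 \le m_1 < n_1 < \cdots < m_k < n_k \le N$ with $F(i+m_j) - F(i) < a m_j$ and $F(i+n_j) - F(i) > b n_j$ for each $j$. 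Denote by $\phi(i)$ the total number of upcrossings of $(a,b)$ by the sequence $S_1(T^i x, 1_A), \ldots, S_N(T^i x, 1_A)$; then $T^i x \in U_{a,b,k,N}$ iff $\phi(i) \ge k$, so for any window $W$,
\[
\#\{i \in W : T^i x \in U_{a,b,k,N}\} \;\le\; \frac{1}{k}\sum_{i \in W} \phi(i).
\]

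\medskip
\noindent\emph{Key inequality.} The technical heart of the proof is the windowed Bishop bound: for every window $W = [L, L+M-1]$,
\[
(b-a) \sum_{i \in W} \phi(i) \;\le\; F(L+M+N) - F(L).
\]
This is the combinatorial statement underlying Bishop's proof, and I would prove it by amortization. For each $i \in W$ and each of its $\phi(i)$ upcrossings $(m_j, n_j)$, the inequality $F(i+n_j) - F(i+m_j) > bn_j - am_j \ge (b-a) n_j$ records an excess of $F$-mass of at least $b-a$ in the interval $[i + m_j, i + n_j)$; reorganizing the double sum $\sum_i \sum_j$ by position in the extended window $[L, L+M+N)$ and applying a Hopf- or rising-sun-type maximal lemma shows that each unit of mass added by $F$ is charged at most once per upcrossing, yielding the bound.

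\medskip
\noindent\emph{Passage to densities.} Combining,
\[
\frac{\#\{i \in W : T^i x \in U_{a,b,k,N}\}}{M} \;\le\; \frac{F(L+M+N) - F(L)}{k(b-a)\,M}.
\]
Taking $\sup_L$ and then $\limsup_{M \to \infty}$, and using that
\[
\limsup_{M} \sup_L \frac{F(L+M+N) - F(L)}{M} \;=\; \overline{s}^*(x, A) \;\le\; 1
\]
(since the ratio only differs from $(F(L+M+N)-F(L))/(M+N)$ by a factor tending to $1$), we obtain
\[
\overline{s}^*(x, U_{a,b,k,N}) \;\le\; \frac{\overline{s}^*(x, A)}{k(b-a)} \;\le\; \frac{1}{k(b-a)} \;<\; \frac{2}{k(b-a)}.
\]
The final strict inequality is just $1 < 2$, so the constant $2$ in the statement is a generous upper bound that gives strictness for free.

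\medskip
\noindent\emph{Main obstacle.} The only non-routine step is the windowed upcrossing inequality above. Its proof is the standard ``buy low, sell high'' amortization from Bishop's argument, and the only subtlety is bookkeeping at the boundary of the window, which contributes the harmless $+N$ in $F(L+M+N)-F(L)$ and washes out at the level of uniform densities. A cleaner route that avoids such bookkeeping is to apply the discrete Hopf / Calder\'on-Zygmund maximal inequality directly to the centered sequence $(1_A(T^\ell x) - \tfrac{a+b}{2})_{\ell \in \mathbb{Z}}$; either way, the constant $1$ is achievable, and the stated constant $2$ is comfortably loose.
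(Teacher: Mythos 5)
Your overall architecture is sound and is in fact Bishop's original route rather than the one the paper sketches: you bound the density of $U_{a,b,k,N}$ by $\frac{1}{k}$ times the averaged total upcrossing count and then invoke a windowed ``integrated upcrossing inequality,'' whereas the paper follows the Kalikow--Weiss scheme, building nested disjoint families $\mathcal{A}_1\subseteq\mathcal{B}_1\subseteq\cdots\subseteq\mathcal{A}_k\subseteq\mathcal{B}_k$ by a Vitali-type selection and obtaining exponential decay in $k$. The reduction to the windowed inequality and the passage to uniform densities at the end are both correct.

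The genuine gap is that your ``key inequality'' $(b-a)\sum_{i\in W}\phi(i)\le F(L+M+N)-F(L)$ is exactly the mathematical content of the theorem, and the one sentence you offer for it does not prove it and is false as literally stated. The assertion that ``each unit of mass added by $F$ is charged at most once per upcrossing'' cannot be right in the naive sense: take $A$ to be a single solid block of length $B$. Then roughly $B(1-b)/b$ distinct base points $i$ each have an upcrossing whose witnessing interval $[i+m_1,i+n_1)$ covers essentially that same block, so the same $A$-visits are charged by many different upcrossings simultaneously. Summing the single-site bound $(b-a)\phi(i)\le F(i+N)-F(i)$ over $i\in W$ only yields $(b-a)\sum_{i\in W}\phi(i)\le N\cdot(F(L+M+N)-F(L))$, which is useless since the factor $N$ destroys uniformity. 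What rescues the inequality is that the upcrossing based at $i$ with witness $(m_j,n_j)$ deposits excess mass at least $(b-a)n_j$ --- proportional to the distance from $i$ to the right endpoint of its interval, not merely a fixed quantum $b-a$ --- and converting this into the window bound requires an actual rising-sun/Vitali reorganization of the heavily overlapping intervals. That covering argument is precisely what the paper's own proof consists of, and it is the step you have named (``a Hopf- or rising-sun-type maximal lemma'') but not carried out. Until it is supplied, the proof is incomplete at its central point; everything else in your write-up is routine.
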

In fact this holds with an exponential decay rate \cite{I96}. We
do not need this stronger result, all we will use is that the rate
is universal (i.e., depends only on $a,b$), but the proof is easier;
we give a sketch below. Fix $y\in X$ and a large $L\gg N$, and consider
the set of times $I\subseteq\{1,\ldots,L\}$ such that $T^{i}y\in A$
for $i\in I$. Consider the function $f:I\rightarrow\{0,1\}$ such
that $f(i)=1_{A}(T^{i}y)$. For each such $i\in I$ there are times
$1\leq m_{1}(i)<n_{1}(i)<m_{2}(i)<n_{2}(i)<\ldots<m_{k}(i)<n_{k}(i)\leq N$,
witnessing the fact that $T^{i}y$ has $k$ upcrossings up to time
$N$. This means that on each of the intervals $A_{i,j}=[i,i+m_{j}(i))$
the average of $f$ is less than $a$, and on each of the intervals
$B_{i,j}=[i,i+n_{j}(u))$, the average of $f$ is greater than $b$;
and $A_{i,1}\subseteq B_{i,1}\subseteq A_{i,2}\subseteq\ldots\subseteq A_{i,k}\subseteq B_{i,k}$.
Given this combinatorial structure, one now shows that one can obtain
\emph{disjount} families of intervals $\mathcal{A}_{1},\mathcal{B}_{1},\ldots,\mathcal{A}_{k},\mathcal{B}_{k}$,
with the $\mathcal{A}_{\ell}$ family consisting of intervals of the
form $A_{i,j}$ and the $\mathcal{B}_{\ell}$ families consisting
of intervals $B_{i,j}$, such that 
\[
\cup\mathcal{A}_{1}\subseteq\cup\mathcal{B}_{1}\subseteq\cup\mathcal{A}_{2}\subseteq\ldots\subseteq\cup\mathcal{A}_{k}\subseteq\cup\mathcal{B}_{k}
\]
and such that $\cup\mathcal{A}_{1}$ is of size comparable to $I$.
This is a variation on the Vitali covering lemma (observe that for
each $1\leq j\leq k$, the original intervals $\{A_{i,j}\}_{i\in I}$
may overlap quite a lot). Finally, we observe that the average of
$f$ on each $\mathcal{A}_{j}$ is less than $a$, while the average
over $\mathcal{B}_{j-1}$ is greater than $b$. Since $f$ is bounded
between $0$ and $1$, this says that $|\cup\mathcal{A}_{j}|\geq\frac{b}{a}|\cup\mathcal{B}_{j-1}|$.
Thus $|\mathcal{A}_{k}|\geq(\frac{b}{a})^{k-1}|\mathcal{B}_{1}|\geq|I|$.
Finally $\mathcal{A}_{k}\subseteq[1,L+N]\subseteq[1,\frac{b}{a}L]$
(since $N\ll L$), and we conclude that $|I|\leq(\frac{b}{a})^{k-2}L$,
as desired.

Other versions can be found in \cite[Section 2]{CE97} and \cite[Section 2]{KW99},
where one can also read off versions of the statement above.

\subsection{\label{sub:Construction-of-the-for-div}Construction of the generator}
\begin{thm}
\label{thm:generator-for-div}Let $(X,\mathcal{B},T)$ be a Borel
system and $A\in\mathcal{B}$. Then $\divergent(A)$ has a 4-set generator.\end{thm}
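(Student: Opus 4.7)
The plan is to follow the strategy of Theorem \ref{thm:generator-for-nul}: apply Proposition \ref{prop:general-strategy}(b) to the restriction of $T$ to $\divergent(A)$, using an $\omega$-cover built from Bishop's upcrossing sets $U_{a,b,k,N}$ of Theorem \ref{thm:finiary-Bishop} (with $f = 1_A$), and then refine by $\gamma = \{A \cap \divergent(A),\,\divergent(A)\setminus A\}$. Since the proposition supplies a two-set partition $\beta$ with $\sigma_T(\beta) \lor \sigma_T(\gamma) = \mathcal{B}|_{\divergent(A)}$, the common refinement $\beta \lor \gamma$ has at most $2 \cdot 2 = 4$ atoms.

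To build the cover, enumerate the pairs of rationals $(a_j,b_j)_{j \ge 1}$ with $0 \le a_j < b_j \le 1$, and for each $(j,m) \in \mathbb{N}^2$ choose $k_{j,m}$ large enough that $2/(k_{j,m}(b_j - a_j)) < 2^{-(j+m+5)}$; then set $W_{j,m,N} = U_{a_j, b_j, k_{j,m}, N} \setminus U_{a_j, b_j, k_{j,m}, N-1}$, the set of $x$ whose $k_{j,m}$-th upcrossing of $(a_j,b_j)$ by the averages $S_n(\cdot, A)$ is first completed at time exactly $N$. I take $\alpha = \{W_{j,m,N}\}_{(j,m,N) \in \mathbb{N}^3}$. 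Any $x \in \divergent(A)$ satisfies $\underline{s}(x, A) < \overline{s}(x, A)$ and therefore admits some $j$ with $\underline{s}(x,A) < a_j < b_j < \overline{s}(x,A)$; for every $m$ there is then a unique $N_{j,m}(x) \ge 1$ with $x \in W_{j,m,N_{j,m}(x)}$, so $x$ lies in infinitely many members of $\alpha$.

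For condition (b) I would partition the index set by $(j,m)$, putting $I_{(j,m)} = \{(j,m,N) : N \ge 1\}$: the sets in each block are pairwise disjoint by construction. For any finite $J \subseteq \mathbb{N}^3$ and any $x$, the union $\bigcup_{N \,:\, (j,m,N) \in J} W_{j,m,N}$ is contained in $U_{a_j, b_j, k_{j,m}, N^*}$ for $N^* = \max\{N : (j,m,N) \in J\}$, so Theorem \ref{thm:finiary-Bishop} bounds its upper uniform density at $x$ by $2/(k_{j,m}(b_j - a_j)) < 2^{-(j+m+5)}$; summing over $(j,m)$ gives a total less than $\sum_{j,m \ge 1} 2^{-(j+m+5)} = 2^{-5}$, well below $1 - \sup_{j,m,N} \overline{s}^*(x, W_{j,m,N}) \ge 1 - 2^{-7}$. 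Since each $W_{j,m,N}$ depends only on $1_A(T^i x)$ for $0 \le i < N$, we have $\alpha \subseteq \sigma_T(\gamma)$, and Proposition \ref{prop:general-strategy} then furnishes the two-set partition $\beta$ completing the proof.

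The only real obstacle is securing the density estimate itself: the bound on $\overline{s}^*(x, U_{a,b,k,N})$ must hold at \emph{every} $x \in X$ and uniformly in $N$, since in the Borel setting one cannot afford to throw away even a single orbit. This is precisely what the finitary Bishop upcrossing inequality (Theorem \ref{thm:finiary-Bishop}) supplies. Once that tool is in hand, the choice of $k_{j,m}$ above is routine geometric-series bookkeeping, and the remainder of the argument parallels the proof of Theorem \ref{thm:generator-for-nul}.
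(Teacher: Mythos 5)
Your proof is correct and follows essentially the same route as the paper's: both build an $\omega$-cover from ``time of the $k$-th upcrossing'' sets, verify condition (b) of Proposition \ref{prop:general-strategy} via the finitary Bishop inequality (Theorem \ref{thm:finiary-Bishop}), and refine by $\{A,\,X\setminus A\}$ to obtain $2\cdot 2=4$ sets. The only cosmetic difference is that you enumerate all rational intervals $(a_j,b_j)$ with a double index, whereas the paper measurably selects a single invariant interval $(a(x),b(x))$ inside the oscillation gap of each point.
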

\begin{proof}
We can assume (by restriction if necessary) that $X=\divergent(A)$.
For $x\in X$ write $\delta=\delta(x)=\overline{s}(x,A)-\underline{s}(x,A)$
and let $a=a(x)=\underline{s}(x,A)+\delta/3$ and $b=b(x)=\overline{s}(x,A)-\delta/3$,
so $a(\cdot),b(\cdot)$ are measurable and shift invariant and $\underline{s}(x,A)<a<b<\overline{s}(x,A)$. 

Let $k_{p}=\left\lceil 2^{p+2}/(b-a)\right\rceil $. For $x\in X$
let $m_{i}(x),n_{i}(x)$ be the lexicographically least upcrossing
sequence of $x$ with respect to $(a,b)$, and let $A_{p,n}$ denote
the set of points whose $k_{p}$-th upcrossing occurs at time $n$,
i.e.
\[
A_{p,n}=\{x\in X\,:\,n_{k_{p}}(x)=n\}
\]
These sets are measurable, and we claim that $\alpha=\{A_{p,n}\}_{p,n\in\mathbb{N}}$
satisfies is an $\omega$-cover of $X$ satisfying the hypothesis
of Proposition \ref{prop:general-strategy}. 

Indeed, for each $p$ and $x\in X$ the $k_{p}$-th upcrossing of
$(a(x),b(x))$ occurs at some time or other, i.e. $X=\bigcup_{n=1}^{\infty}A_{p,n}$
for all $p$, which shows that $\alpha$ is an $\omega$-cover of
$X$. 

We now verify the hypothesis of Proposition \ref{prop:general-strategy}
(b). For the index set $\mathbb{N}\times\mathbb{N}$ of $\{A_{p,n}\}$
we choose the partition $I_{p}=\{p\}\times\mathbb{N}$, $p\in\mathbb{N}$.
We first claim that for every $N$, 
\begin{equation}
\overline{s}^{*}(x,\bigcup_{n=1}^{N}A_{p,n})<\frac{1}{2^{p+1}}\label{eq:oscilation-implies-low-density}
\end{equation}
This is enough because for any finite $J\in\mathbb{N}\times\mathbb{N}$
there is an $N$ such that $n\leq N$ for every $(p,n)\in J$, and
therefore for every $(q,m)\in\mathbb{N}\times\mathbb{N}$,
\begin{eqnarray*}
\sum_{p=1}^{\infty}\overline{s}^{*}(x,\bigcup_{(p,n)\in J\cap I_{p}}A_{p,n}) & \leq & \sum_{p=1}^{\infty}\overline{s}^{*}(x,\bigcup_{n=1}^{N}A_{p,n})\\
 & < & \sum_{p=1}^{\infty}\frac{1}{2^{p+1}}\\
 & = & \frac{1}{2}\\
 & < & 1-\overline{s}^{*}(x,A_{q,m})
\end{eqnarray*}
where the last inequality is because, by (\ref{eq:oscilation-implies-low-density})
again, $\overline{s}^{*}(x,A_{q,m})<1/2$.

It remains to prove (\ref{eq:oscilation-implies-low-density}). Fix
$\alpha<\beta$ and consider the $T$-invariant set 
\[
X_{\alpha,\beta}=\{x\in X\,:\,a(x)=\alpha\,,\,b(x)=\beta\}
\]
Fix $p,N$ and set $k'_{p}=2^{p+2}/(\beta-\alpha)$. Using the notation
of Theorem \ref{thm:finiary-Bishop}, we have 
\begin{equation}
X_{\alpha,\beta}\cap\bigcup_{n=1}^{N}A_{p,n}\subseteq U_{\alpha,\beta,k'_{p},N}\label{eq:divergent-proof-union}
\end{equation}
Therefore for $x\in X_{\alpha,\beta}$, by Theorem \ref{thm:finiary-Bishop}
and monotonicity of $\overline{s}^{*}(x,\cdot)$ we have 
\[
\overline{s}^{*}(x,\bigcup_{n=1}^{N}A_{p,n})\leq\overline{s}^{*}(x,U_{\alpha,\beta,k'_{p},N})<\frac{2}{k'_{p}(\beta-\alpha)}\leq\frac{1}{2^{p+1}}
\]
This holds for $x\in X_{\alpha,\beta}$, for all $\alpha<\beta$.
But every $x\in X$ belongs to some $X_{\alpha,\beta}$ for some $\alpha<\beta$,
and the last inequality gives (\ref{eq:oscilation-implies-low-density}).

To conclude the proof, ~apply Proposition \ref{prop:general-strategy},
which gives a two-set partition $\beta$ of $X$ such that $\sigma_{T}(\beta)\lor\sigma_{T}(\alpha)=\mathcal{B}$.
Taking $\gamma=\{A,X\setminus A\}$, we note that $\alpha$ is $\sigma_{T}(\gamma)$-measurable,
so by the same proposition $\beta\lor\gamma$ is a $4$-set generator
for $(X,\mathcal{B},T)$.
\end{proof}

\section{\label{sec:A-generator-theorem}A generator theorem for deficient
points, and putting it all together}

We say that $x\in\Sigma^{\mathbb{Z}}$ is regular if the frequency
$s(x,a)$ exists for every $a\in\Sigma^{*}$, and otherwise call it
divergent. Let $\Regular(\Sigma^{\mathbb{Z}})$ and $\Divergent(\Sigma^{\mathbb{Z}})$
denote the sets of regular and divergent points. If $x\in\Sigma^{\mathbb{Z}}$
is such that $s(x,a)$ exists and is positive for all $a\in\Sigma$,
write
\[
\rho(x)=\sum_{a\in\Sigma}s(x,a)
\]
and in this case say that $x$ is deficient if $\rho(x)<1$. We then
define the defect to be $1-\rho(x)$. Denote for the set of deficient
points by $\Deficient(\Sigma^{\mathbb{Z}})$. Finally, say that $x\in\Sigma^{\mathbb{Z}}$
is null if $s(x,a)=0$ for some $a\in\Sigma^{*}$ and write $\Nul(\Sigma^{\mathbb{Z}})$
for the set of null points. 

Given a Borel system $(X,\mathcal{B},T)$ and a partition $\alpha=\{A_{i}\}_{i\in\Sigma}$,
associate to every $x\in X$ its $\alpha$-itinerary, $\alpha_{*}(x)=(\alpha(T^{n}x))_{n\in\mathbb{Z}}\in\Sigma^{\mathbb{Z}}$.
We say that $x\in X$ is $\alpha$-regular, $\alpha$-divergent, $\alpha$-deficient
or $\alpha$-null if $\alpha_{*}(x)$ is regular, divergent, deficient
or null, respectively (this characterization of $\alpha$-deficient
points is consistent with the one in the introduction).

\subsection{\label{sub:Inducing}Increasing the defect}

The goal of this section is to show that, given a defective partition
(relative to some point), we can measurably produce another partition
which, relative to the point, either has defect arbitrarily close
to one, or is divergent. We formulate this in symbolic language. 
\begin{prop}
\label{prop:increasing the defect}For every $\delta>0$ there factor
map, 
\[
\pi_{\delta}:\Deficient(\Sigma^{\mathbb{Z}})\rightarrow\Deficient(\Sigma^{\mathbb{Z}})\cup\Divergent(\Sigma^{\mathbb{Z}}),
\]
such that if $x\in\Deficient(\Sigma^{\mathbb{Z}})$ and $\pi_{\delta}(x)$
is regular, then $\rho(\pi_{\delta}(x))<\delta$.\end{prop}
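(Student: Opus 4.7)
My plan is to construct $\pi_\delta$ by a measurable ``skipping'' procedure that relocates the output at each position to the next rare-symbol occurrence of $x$. Given $x \in \Deficient(\Sigma^{\mathbb{Z}})$, I would measurably and shift-invariantly choose a finite set $F = F(x) \subseteq \Sigma$ whose $x$-frequency satisfies $\sum_{a \in F(x)} s(x,a) > \rho(x) - \epsilon(x)$, for $\epsilon(x) := \delta(1-\rho(x))/2$ (selecting, say, the lex-least initial segment that works). The factor map is then
\[
\pi_\delta(x)_n = \phi(S^n x), \qquad \phi(y) := y_{k(y)}, \qquad k(y) := \min\{k \geq 0 : y_k \notin F(y)\}.
\]
Since $F$ is a shift-invariant measurable function and the set $I_y := \{k : y_k \notin F(y)\}$ has density at least $1 - \rho(y) > 0$ for every deficient $y$, this is well-defined, measurable, and shift-equivariant, with image taking values in $\Sigma \setminus F(x)$.

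The anticipated bound is that if $\pi_\delta(x)$ is regular, then each $a \notin F$ contributes $s(\pi_\delta(x), a) = \lim_N \tfrac{1}{N}\sum_{i \in I_x \cap [0,N),\, x_i = a} g_i$, where $g_i$ is the $I_x$-gap preceding $i$ (because positions $n$ with $\pi_\delta(x)_n = a$ are exactly those in the interval $(i_{\mathrm{prev}}, i]$ for $i \in I_x$ with $x_i = a$). Heuristically, since the total $\sum g_i$ on $[0,N)$ is $\approx N$, one expects
\[
\rho(\pi_\delta(x)) \;\lesssim\; \frac{\sum_{a \notin F} s(x,a)}{s(I_x)} \;\leq\; \frac{\epsilon(x)}{1-\rho(x)} \;<\; \delta,
\]
placing $\pi_\delta(x)$ in $\Deficient$; symbols in $F$ do not appear in the output and contribute zero.

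The main obstacle is that this heuristic breaks down when $x$ has rare symbols concentrated at the ends of long $I_x$-gaps: such a symbol $a$ with $s(x,a) = 0$ can acquire positive frequency $s(\pi_\delta(x), a)$ because each of its occurrences is preceded by a long block. To force the bound rigorously, I would overlay a $G(x)$-marker structure on $\mathbb{Z}$ via Proposition~\ref{prop:two-symbol-AP-factor} and work with the enlarged skip set $I_x \cup M(x)$, whose gaps are uniformly bounded by $G(x)+1$. The cost is that marker positions $m$ with $x_m \in F$ contribute $F$-valued blocks back into the output, but the total density of these ``bad'' blocks is controlled by the marker density $\sim 1/G(x)$ combined with the upper density of positions lying in long $I_x$-gaps, and both can be forced small by jointly calibrating $G(x)$ and $\epsilon(x)$ against $\delta$. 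The delicate technical step is that the upper density of long-gap positions need not decay as $L \to \infty$ without extra care, so $F$ may have to be enlarged to absorb any symbol whose ``gap-weighted'' frequency is non-negligible, via a finite iteration (each round adds symbols carrying at least a fixed fraction of mass, and total mass is bounded by~$1$). Any pathological $x$ not tamed by this scheme forces $\pi_\delta(x)$ into divergence, which is permitted by the codomain.
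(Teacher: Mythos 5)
There is a genuine gap, and it sits exactly where you locate it. In your construction the frequency of a symbol $a\notin F$ in $\pi_\delta(x)$ is the density of $\bigcup_{i\in I_x,\,x_i=a}(i_{\mathrm{prev}},i]$, i.e.\ the gap-weighted frequency, and this has no a priori relation to $s(x,a)$: a symbol of arbitrarily small positive frequency whose occurrences sit at the right ends of long $I_x$-gaps acquires output frequency bounded away from $0$ (indeed close to $1$). For instance, alternate blocks of lengths $L_k\to\infty$, writing a fixed symbol $b\notin F$ on even blocks and $F$-symbols on odd blocks except for a single terminal occurrence of some $a\notin F$ (with the remaining symbols of $\Sigma$ sprinkled in at geometrically small frequencies so that $x$ is genuinely deficient). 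Then all relevant limits exist, $\pi_\delta(x)$ is regular, and $s(\pi_\delta(x),a)\approx 1/2$, so the conclusion fails. The proposed repairs do not close this hole. The marker overlay merely converts the inflated frequency of $a$ into an inflated frequency of $F$-symbols (the markers interior to a long $F$-run carry $F$-symbols back into the output), and, as you yourself observe, $\underline{s}(I_x)>0$ does not force the upper density of long-gap positions to be small. The iterated enlargement of $F$ goes in the wrong direction: adding symbols to $F$ shrinks $I_x$ and lengthens its gaps, so the gap-weighted frequencies of the surviving symbols are recomputed and can grow; your termination argument (``total mass bounded by $1$'') applies to $\sum_a s(x,a)$, not to the gap-weighted quantities, which are not monotone under the iteration. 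Nor can the bad cases be shunted to $\Divergent(\Sigma^{\mathbb{Z}})$: in the example above nothing diverges.

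What is missing is a mechanism that moves the \emph{labels} of the rare positions into the gaps while preserving densities, rather than letting each rare position claim its entire preceding gap. This is how the paper proceeds: it fixes the rare set $J^{(0)}=\{i:x_i>n_0\}$, which has density at least $1-\rho(x)>0$, together with its partition by symbol (of total frequency $<\delta^4 s(J^{(0)})$), and transplants copies of this partitioned set into $\mathbb{Z}\setminus J^{(0)}$ using the equivariant injections of Lemma \ref{lem:equivarian-injections}, restricted to the part of the domain displaced by at most some finite $M$ so that densities are preserved exactly. Each transplant either covers a definite additional fraction $\geq\delta^{2}s(J^{(0)})/4$ of the remainder while keeping the internal frequency sum below $\frac{\delta}{2}$ of the piece it covers, or else exhibits a set whose density does not exist, in which case the output is declared to be that divergent indicator sequence. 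Finitely many rounds then cover all but density $\delta/2$ of $\mathbb{Z}$, and the resulting relabelled sequence has defect at least $1-\delta$. Your skip-forward map cannot be patched into this without essentially rebuilding that density-preserving transport step.
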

\begin{proof}
The scheme of the proof is as follows. We describe a (measurable)
construction which either produces a divergent point $y\in\Sigma^{\mathbb{Z}}$,
in which case we can set $\pi_{\delta}(x)=y$, or else produces an
integer $p$ and disjoint subsets $J^{(0)},\ldots,J^{(p)}\subseteq\mathbb{Z}$
and partitions $\{J_{j}^{(k)}\}$ of $J^{(k)}$, such that
\begin{enumerate}
\item [(i)]$s(\mathbb{Z}\setminus\bigcup_{k=0}^{p}J^{(k)})<\delta/2$, 
\item [(ii)]$\sum s(J_{j}^{(k)})<\frac{\delta}{2}s(J^{(k)})$ for each
$k=0,\ldots,p$.
\end{enumerate}
Then, identifying $\Sigma$ with $\mathbb{N}\times\mathbb{N}$, we
can define 
\[
\pi_{\delta}(x)_{i}=\left\{ \begin{array}{cc}
(k,j) & i\in J_{j}^{(k)}\\
(p+1,0) & i\in\mathbb{Z}\setminus\bigcup_{k=0}^{p}J^{(k)}
\end{array}\right.
\]
and we have 
\begin{eqnarray*}
\rho(\pi_{\delta}(x)) & = & \sum_{k=0}^{p}\sum_{j}s(J_{j}^{(k)})+s(\mathbb{Z}\setminus\bigcup_{k=0}^{p}J^{(k)})\\
 & < & \sum_{k=0}^{p}\frac{\delta}{2}s(J^{(k)})+\frac{\delta}{2}\\
 & < & \delta
\end{eqnarray*}
so $\pi_{\delta}(x)$ has defect at least $1-\delta$. 

We turn to the construction. Without loss of generality we assume
that $\delta<1/8$ and $\Sigma=\mathbb{N}$. Let $x\in\mathbb{N}^{\mathbb{Z}}$
be regular and deficient. Note that deficiency implies that $x$ is
aperiodic.

\textbf{Constructing $J^{(0)}$ and $\{J_{j}^{(0)}\}$:} Let $n_{0}=n_{0}(x)$
denote the least integer such that 
\[
\sum_{j>n_{0}}s(x,j)<\delta^{4}(1-\rho(x))
\]
(there exists such $n_{0}$ since $\sum s(x,j)<\infty$ and $1-\rho(x)>0$
by assumption), and and let
\[
J^{(0)}=\{i\in\mathbb{Z}\,:\,x_{i}>n_{0}\}
\]
and
\[
J_{j}^{(0)}=\{i\in\mathbb{Z}\,:\,x_{i}=j\}
\]
so that $\{J_{j}^{(0)}\}_{j>n_{0}}$ partitions $J^{(0)}$. Note that
\begin{eqnarray*}
s(J^{(0)}) & = & 1-\sum_{j=0}^{n_{0}}s(J_{j})\\
 & \geq & 1-\rho(x)\\
 & > & 0
\end{eqnarray*}
Thus, by choice of $n_{0}$, 
\begin{equation}
\sum_{j>n_{0}}s(J_{j}^{(0)})<\delta^{4}(1-\rho(x))<\delta^{4}s(J^{(0)})\label{eq:c}
\end{equation}
so (ii) is satisfied. 

\textbf{Constructing $J^{(k)},\{J_{j}^{(k)}\}$ for $k=1,\ldots,p$:
}Our strategy is now to copy a substantial subset of $J^{(0)}$, and
the partition induced on it from $\{J_{j}^{(0)}\}$, into the complement
of $J^{(0)}$, and repeat this until most of the complement is exhausted.
We would like to do this by mapping $J^{(0)}$ to $\mathbb{Z}\setminus J^{(0)}$
using Lemma \ref{lem:equivarian-injections}, but in the process one
loses control of the densities of the images of $J_{j}^{(0)}$. But
one can control the frequencies if one works with points in $J^{(0)}$
that are moved by at most some large $M$. The details are worked
out in the following lemma, which provides the basic step of the strategy: 
\begin{lem}
Let $J\subseteq\mathbb{Z}$ and suppose that $s(J)$ exists and satisfies
\[
\frac{1}{2}\delta s(J^{(0)})<s(J)<\delta s(J^{(0)})
\]
Then there exists a set $J'\subseteq J$ and a partition $\{J_{j}\}$
of $J'$, all determined measurably by $x$ and $J$, such that one
of the following holds:
\begin{enumerate}
\item [(a)]$s(J')$ does not exist, 
\item [(b)]$s(J')>\frac{\delta^{2}}{4}s(J^{(0)})$ and $\sum s(x,J_{j})<\delta s(J')$.
\end{enumerate}
\end{lem}
\begin{proof}
By assumption $s(J)<\delta s(J^{(0)})<s(J^{(0)})$, so we can apply
Lemma 3.8 to $x,J,J^{(0)}$, and obtain an injection $f:J\rightarrow J^{(0)}$,
determined measurably by $x,J^{(0)},J$, and hence by $x,J$ (since
$J^{(0)}$ is itself determined measurably by $x$). For $m=0,1,2,\ldots$
set
\[
U_{m}=\{n\in J\,:\,|f(n)-n|=m\}
\]
If one of the densities $s(U_{m})$ doesn't exist we define $J'=U_{m}$
and we are in case (a). 

Thus assume these densities exist. If $\sum_{m}s(U_{m})<\delta s(J)$,
we define $J'=J$ and $J_{j}=U_{j}$ , so $\{J_{j}\}$ partitions
$J'$, and we are in case (b), 

Thus, assume that $\sum_{m}s(U_{m})\geq\delta s(J)$. Choose $M\in\mathbb{N}$
such that 
\begin{equation}
\sum_{m=0}^{M}s(U_{m})>\frac{\delta}{2}s(J)\label{eq:a}
\end{equation}
Set 
\[
J'=\bigcup_{m\leq M}U_{m}
\]
Note that by the hypothesis $s(J)\geq\frac{1}{2}\delta s(J^{(0)})$
we have 
\begin{equation}
s(J')=\sum_{m\leq M}s(U_{m})>\frac{\delta}{2}s(J)\geq\frac{\delta^{2}}{4}s(J^{(0)})\label{eq:b}
\end{equation}
Next, for $j>n_{0}$ define 
\[
J_{j}=J'\cap(f^{-1}(J_{j}^{(0)}))
\]
(we leave it undefined for $j\leq n_{0}$). Clearly $\{J_{j}\}$ is
a partition of $J'$, and by (\ref{eq:b}) we have the first part
of (b). Furthermore, the map $f|_{J'}$, and hence also $(f|_{J'})^{-1}=f^{-1}|_{f(J')}$,
displaces points by at most $M$, so these maps preserves densities,
and we have 
\[
s(J_{j})=s(J_{j}^{(0)}\cap f(J'))\leq s(J_{j}^{(0)})
\]
Therefore, using (\ref{eq:c}) and (\ref{eq:b}) and the standing
assumption $\delta<1/8$, 
\[
\sum_{j>n_{0}}s(J_{j})\leq\sum_{j>n_{0}}s(J_{j}^{(0)})<\delta^{4}s(J^{(0)})\leq4\delta^{2}s(J')<\frac{\delta}{2}s(J')
\]
which is the second part of (b).
\end{proof}
Returning to the proof of the proposition, suppose that $s(\mathbb{Z}\setminus J^{(0)})>\frac{\delta}{2}\geq\frac{\delta}{2}s(J^{(0)})$
(as explained earlier, if not, we are done). Applying Lemma \ref{lem:selecting-a-subset}
to $x$ and $I=\mathbb{Z}\setminus J^{(0)}$ to obtain a set $J\subseteq I$
with $\frac{1}{2}\delta s(J^{(0)})<s(I)<\delta s(J^{(0)})$. To this
we apply the previous lemma, either obtaining the set $E$ from (a)
in the lemma, in which case we define $\pi_{\delta}(x)=1_{E}\in\Divergent(\mathbb{N}^{\mathbb{Z}})$,
or else obtaining $J^{(1)}\subseteq J\subseteq\mathbb{Z}\setminus J^{(0)}$
and $\{J_{j}^{(1)}\}$ satisfying (b) of the lemma, which gives property
(ii) above, and furthermore, $s(J^{(1)})>\frac{\delta^{2}}{4}s(J^{(0)})$,
which is a definite increment. We can repeat this inductively: assuming
that we have defined $J^{(\ell)}$ and $\{J_{j}^{(\ell)}\}$ for $\ell<k$
and $s(\bigcup_{\ell<k}J^{(\ell)})\geq\frac{\delta}{2}$ we either
define $\pi_{\delta}(x)\in\Divergent(\mathbb{N}^{\mathbb{Z}})$ or
obtain $J^{(k)}$ and $\{J_{j}^{(k)}\}$ as required by (ii). At each
step the total mass of the $J^{(k)}$s increases by $\delta^{2}s(J^{(0)})/4$,
so unless the process terminates early with $\pi_{\delta}(x)\in\Divergent(\mathbb{N}^{\mathbb{Z}})$,
after a finite number $p$ if steps we cover a set of density $1-\delta/2$,
and are done.
\end{proof}
We re-formulate the proposition in the language of partitions.
\begin{cor}
\label{cor:iterated-reduction-of-defect-DS-version}Let $\alpha$
be a countable partition of a Borel system $(X,\mathcal{B},T)$. Then
for every $\delta>0$ there is a partition $\alpha'$ of $X$ such
that every $x\in\Deficient(\alpha)$ is either $\alpha'$-divergent
or else $\sum_{A\in\alpha'}s(x,A)<\delta$.\end{cor}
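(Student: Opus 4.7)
The plan is to obtain this corollary as a direct ``depackaging'' of Proposition \ref{prop:increasing the defect} using the itinerary map, and indeed this appears to be exactly why the proposition was phrased symbolically. Let $\alpha = \{A_i\}_{i \in \Sigma}$ with $\Sigma$ a (at most countable) index set, so the itinerary map $\alpha_* : X \to \Sigma^{\mathbb{Z}}$ defined by $\alpha_*(x)_n = i$ iff $T^n x \in A_i$ is a measurable, equivariant factor map. By the very definition of $\Deficient(\alpha)$ we have $\alpha_*(\Deficient(\alpha)) \subseteq \Deficient(\Sigma^{\mathbb{Z}})$, and the frequency of any symbol $i$ in $\alpha_*(x)$ equals $s(x,A_i)$.

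Next, compose with the map $\pi_\delta$ from Proposition \ref{prop:increasing the defect} to obtain a measurable equivariant map
\[
\beta := \pi_\delta \circ \alpha_* : \Deficient(\alpha) \longrightarrow \Sigma^{\mathbb{Z}},
\]
with image in $\Deficient(\Sigma^{\mathbb{Z}}) \cup \Divergent(\Sigma^{\mathbb{Z}})$ and with $\rho(\beta(x)) < \delta$ whenever $\beta(x)$ is regular. Extend $\beta$ to all of $X$ in any measurable equivariant way — for instance, by adjoining one new symbol $\star$ to $\Sigma$ and setting $\beta(x) \equiv \star$ on $X \setminus \Deficient(\alpha)$ (this set is $T$-invariant since $\Deficient(\alpha)$ is).

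Now let $\alpha'$ be the pull-back through $\beta$ of the time-zero partition of $(\Sigma \cup \{\star\})^{\mathbb{Z}}$, i.e.\ $\alpha' = \{\beta^{-1}([a])\}_{a \in \Sigma \cup \{\star\}}$ where $[a] = \{y : y_0 = a\}$. This is a countable measurable partition of $X$, and equivariance of $\beta$ gives $\alpha'_*(x) = \beta(x)$ up to relabeling. Consequently, for any $x \in \Deficient(\alpha)$: if $\beta(x) \in \Divergent(\Sigma^{\mathbb{Z}})$ then some $s(x,A)$ fails to exist, so $x$ is $\alpha'$-divergent; otherwise $\beta(x)$ is regular with $\rho(\beta(x)) < \delta$, so
\[
\sum_{A \in \alpha'} s(x,A) = \rho(\beta(x)) < \delta.
\]

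The real work is contained in Proposition \ref{prop:increasing the defect}; here there is no serious obstacle, only the bookkeeping of turning a symbolic factor map back into a partition of the original space and the mild point of extending $\beta$ measurably off $\Deficient(\alpha)$, which is handled trivially by sending the complement to a single constant sequence.
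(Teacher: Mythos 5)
Your proof is correct and matches the paper's argument exactly: the paper also obtains $\alpha'$ by composing the itinerary map $\alpha_*$ with $\pi_\delta$ and pulling back the standard time-zero partition of $\Sigma^{\mathbb{Z}}$. The only addition in your write-up is the explicit (and harmless) extension of the partition off $\Deficient(\alpha)$, which the paper leaves implicit.
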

\begin{proof}
Compose the itinerary map $\alpha_{*}$ with the factor map $\pi_{\delta}$
from the previous proposition, and pull back the standard generating
partition of $\Sigma^{\mathbb{Z}}$ (consisting of length-$1$ cylinders).
This is $\alpha'$.
\end{proof}

\subsection{\label{sub:Deficient-partitions-of-finite-entropy}Deficient partitions
of finite empirical entropy}

For $x\in\Regular(\Sigma^{\mathbb{Z}})$ set
\[
\widetilde{H}(x)=-(1-\rho(x))\log(1-\rho(x))-\sum_{a\in\Sigma}s(x,a)\log s(x,a)
\]
with the usual convention that $0\log0=0$ and logarithms are in base
$2$. This is just the entropy of the infinite probability vector
whose coordinates are $s_{a}(x)$ and $1-\rho(x)$. This quantity
in general may be infinite, but by merging finite sets of atoms one
can always reduce the entropy as much as one wants. 
\begin{lem}
\label{lem:reducing-entropy}There exists a factor map $\pi:\Deficient(\Sigma^{\mathbb{Z}})\rightarrow\Deficient(\mathbb{N}^{\mathbb{Z}})$
such that $\widetilde{H}(\pi(x))<2$ for every $x\in\Deficient(\Sigma^{\mathbb{Z}})$.
Furthermore, $\pi$ maps regular points to regular points.
\end{lem}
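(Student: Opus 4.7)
My plan is to group the $\Sigma$-symbols of $x$ into finite blocks whose $\pi$-frequencies decay geometrically; the resulting output has entropy bounded by a universal constant strictly less than~$2$.

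After identifying $\Sigma$ with $\mathbb{N}$ via the shift-invariant reordering that makes the frequencies $p_k=s(x,a_k)$ non-increasing in $k$ (with ties broken lexicographically), I build blocks $B_1,B_2,\ldots$ greedily. Set $R_0=\rho$; once $B_1,\ldots,B_{m-1}$ have been defined with remaining mass $R_{m-1}=\sum_{k>\max B_{m-1}}p_k$, let $B_m$ be the shortest initial segment of the unused indices whose cumulative mass first reaches $\tfrac23 R_{m-1}$. Since every $p_k>0$ by deficiency and $\sum p_k<\infty$, each $B_m$ is a finite non-empty interval of $\mathbb{N}$ and the remaining mass shrinks as $R_m\le R_{m-1}/3$, so infinitely many blocks of positive mass are produced. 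Define $\pi(x)_i=m$ iff $x_i=a_k$ with $k\in B_m$. The assignment depends on $x$ only through the shift-invariant frequencies $(p_k)$, so $\pi$ is measurable and equivariant; since each $B_m$ is finite, $s(\pi(x),m)=q_m:=\sum_{k\in B_m}p_k$ exists, and $\sum_m q_m=\rho<1$, so $\pi(x)\in\Deficient(\mathbb{N}^{\mathbb{Z}})$.

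The entropy bound is the core of the argument. Writing $d_m=q_m/\rho$ and $h(t)=-t\log t-(1-t)\log(1-t)$, one decomposes
\[
\widetilde H(\pi(x))=h(\rho)+\rho H',\qquad H'=-\sum_m d_m\log d_m.
\]
The greedy constraints read $d_m\in[\tfrac23 r_{m-1},r_{m-1})$ with $r_0=1$ and $r_m=r_{m-1}-d_m$, and they are scale-invariant under restriction to the tail starting at block $2$. Splitting off the first block and rescaling what remains leads to the self-bounding inequality
\[
H^*\le\sup_{d_1\in[2/3,1)}\bigl[h(d_1)+(1-d_1)H^*\bigr],
\]
where $H^*$ is the supremum of $H'$ over all greedy sequences. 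Since $h$ and $(1-d_1)$ are both decreasing on $[2/3,1)$, the right-hand side is maximised at $d_1=2/3$, giving $H^*\le h(2/3)+H^*/3$, hence $H^*\le\tfrac32 h(2/3)=\tfrac32\log 3-1<1.378$. A one-variable calculus check then yields $h(\rho)+1.378\rho<1.85<2$ for every $\rho\in(0,1)$, delivering the required bound.

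The ``regular to regular'' clause is automatic from the construction: $\pi$ is a letter-to-letter recoding whose lookup table is determined by the shift-invariant frequency data of $x$, so the frequency of any word $b\in\mathbb{N}^*$ in $\pi(x)$ reduces to a finite sum of word-frequencies of $x$, all of which exist whenever $x$ is regular. The main (mild) technical obstacle is justifying the self-bounding entropy inequality $H^*\le 1.378$, in particular verifying that the tail of a greedy sequence is itself a greedy sequence (scale invariance) and that the supremum is genuinely attained at the boundary $d_1=2/3$; the rest of the argument is routine bookkeeping on frequencies and block sizes.
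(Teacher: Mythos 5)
Your proof is correct and follows essentially the same route as the paper: both greedily partition the alphabet into finite blocks whose total frequencies decay geometrically (the paper uses threshold $9/10$ applied to plain initial segments of a fixed ordering, rather than your $2/3$ with frequency-sorting) and then bound $\widetilde{H}$ of the resulting geometrically decaying vector, with regularity preserved because each output symbol pulls back to a finite set of input symbols. The only substantive difference is the final estimate, where the paper sums $-\sum_n q_n\log q_n$ directly from $q_n<\rho\cdot 10^{-(n-1)}$ while you use a self-similar fixed-point bound $H^*\le h(2/3)+\tfrac13 H^*$ (which does require the a priori finiteness of $H^*$, available from $d_m\le 3^{-(m-1)}$); both yield a constant strictly below $2$.
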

One could replace the upper bound $\widetilde{H}(\pi(x))<2$ by $1+\varepsilon$
for any $\varepsilon>0$, but one cannot ask for $H(\pi(x))\leq1$
because this is impossible in the case that $\rho(x)=1/2$. An alternative
approach would be to use Proposition \ref{prop:increasing the defect}
to decrease the defect, but possibly produce an irregular point.
\begin{proof}
Fix an ordering of $\Sigma$. Fix $x\in\Deficient(\Sigma^{\mathbb{Z}})$
and partition $\Sigma$ into finite sets $\Sigma_{1},\Sigma_{2},\ldots$inductively:
writing $s(x,\Sigma_{n})=\sum_{a\in\Sigma_{n}}s(x,a)$, we choose
$\Sigma_{1}$ to be the shortest initial segment such that $s(x,\Sigma_{1})>\frac{9}{10}\sum\sigma(x,a)$,
and assuming we have chosen $\Sigma_{1},\ldots,\Sigma_{n-1}$ choose
$\Sigma_{n}$ to be the largest initial segment of $\Sigma\setminus\bigcup_{i<n}\Sigma_{i}$
such that $s(x,\Sigma_{n})>\frac{9}{10}\sum_{a\in\Sigma\setminus(\Sigma_{1}\cup\ldots\cup\Sigma_{n-1})}s(x,a)$.
Since $\Sigma_{n},\Sigma_{n+1}\subseteq\Sigma\setminus(\Sigma_{1}\cup\ldots\cup\Sigma_{n-1})$
and $\Sigma_{n}$ takes up at least $9/10$ of the set on the right,
it is clear that
\[
\sigma(x,\Sigma_{n+1})<\frac{9}{10}s(x,\Sigma_{n})
\]
so
\[
\sigma(x,\Sigma_{n})<\frac{1}{10^{n-1}}\rho(x)
\]
Evidently the choice of the $\Sigma_{n}$ is measurable. Now define
$\pi(x)$ by 
\[
\pi(x)_{i}=n\qquad\mbox{if }x_{i}\in\Sigma_{n}
\]
Clearly (using finiteness of $\Sigma_{n}$), 
\[
s(\pi(x),n)=\sum_{a\in\Sigma_{n}}s(x,a)
\]
so $\sum_{n\in\mathbb{N}}s(\pi(x),n)=\sum_{a\in\Sigma}s(x,a)=\rho(x)<1$,
and $\pi(x)$ is deficient. Also, by the above $s(\pi(x),n)<\rho(x)/10^{n-1}$,
so, using $-t\log t\leq1/2$ for $t\in(0,1]$,
\begin{eqnarray*}
-\sum_{n\in\mathbb{N}}s(\pi(x),n)\log(s(\pi(x),n)) & < & -\sum_{n=1}^{\infty}\rho(x)10^{-n+1}\log\rho(x)10^{-n+1}\\
 & < & -\rho(x)\log\rho(x)\cdot\sum_{n=0}^{\infty}10^{-n}+\sum_{n=1}^{\infty}\frac{n\log_{2}10}{10^{n}}\\
 & < & \frac{1}{2}\cdot\frac{10}{9}+\frac{10}{81}\cdot\log_{2}10\\
 & < & 0.9656\ldots
\end{eqnarray*}
Since also $-(1-\rho(x))\log(1-\rho(x))\leq1/2$, we obtain $\widetilde{H}(\pi(s))<2$.

Finally, if $x$ is regular then so is $\pi(x)$, since each symbol
in $\pi(x)$ corresponds to the occurrences of a finite set of symbols
in $x$.
\end{proof}
The reason we are interested in partitions with finite empirical entropy
is the following:
\begin{thm}
\label{thm:Krieger-generator-theorem-baby-case}For every countable
alphabet $\Sigma$, the shift-invariant Borel set $\{x\in\Regular(\Sigma^{\mathbb{Z}})\,:\,\widetilde{H}(x)<2\}$
admits a $4$-set generator.
\end{thm}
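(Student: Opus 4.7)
The plan is to adapt Krieger's generator theorem to the Borel setting via the $\omega$-cover framework of Proposition~\ref{prop:general-strategy}. The hypothesis $\widetilde{H}(x) < 2 = \log 4$, read as the entropy of the distribution on $\Sigma \cup \{*\}$ with $*$ carrying the ``rest'' mass $1-\rho(x)$, gives a single-symbol empirical distribution of entropy strictly less than $\log 4$. Combined with subadditivity of joint entropy, the empirical $n$-block distribution has entropy at most $n\widetilde{H}(x) < 2n$, and a finitistic AEP-style estimate produces, for each $x$ and each sufficiently large $n$, a ``typical set'' $T_n(x) \subseteq \Sigma^n$ of size at most $4^{n(1-\eta(x))}$ for some $\eta(x) > 0$ and with total empirical frequency in $x$ arbitrarily close to $1$.

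To get uniform slack, I would partition $X$ into invariant Borel pieces $Y_k = \{x : \widetilde{H}(x) < 2 - 1/k\}$ and work on each separately. On $Y_k$, use the Alpern--Rohlin marker of Proposition~\ref{prop:two-symbol-AP-factor} to equivariantly split each orbit into blocks of length $n = n(k)$, and define a 2-set partition $\gamma$ of $Y_k$ so that $\sigma_{T}(\gamma)$ captures both the block boundaries and the Krieger-style binary encoding of each typical block's content (relative to a codebook $c_x$ chosen measurably in $x$). Because each length-$n$ block is encoded using at most $n(1-\eta_k)$ bits of $\gamma_*(x)$ per block (reserving $n\eta_k$ positions per block as ``free''), and because atypical blocks have negligible total frequency (handled via Lemma~\ref{lem:selecting-a-sequence-of-subsets}), one can construct an $\omega$-cover $\{A_i\} \subseteq \sigma_{T}(\gamma)$ whose elements record the ``data'' bits in $\gamma_*(x)$ and whose summed upper frequency is bounded by $1 - \eta_k/2$, satisfying hypothesis~(a) of Proposition~\ref{prop:general-strategy}. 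That proposition then produces a 2-set allocation partition $\beta$ with $\sigma_{T}(\beta) \vee \sigma_{T}(\gamma) = \mathcal{B}|_{Y_k}$; hence $\beta \vee \gamma$ is a 4-set generator of $Y_k$.

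To assemble a single 4-set generator on $X = \bigsqcup_k Y_k$, I would invoke the shift-of-finite-type strengthening noted after Theorem~\ref{thm:main}: arrange that the generator on each $Y_k$ maps into a distinct sub-SFT of $\{0,1,2,3\}^{\mathbb{Z}}$, with these sub-SFTs pairwise disjoint (distinguished by rare patterns), and merge using the disjointification techniques of Section~\ref{sub:Generators-for-unions-of-nul-and-div-sets}.

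The main obstacle is the finitistic empirical AEP and its implementation: selecting $T_n(x)$ and the codebook $c_x$ measurably and equivariantly in $x$, and quantitatively verifying uniform slack $\eta_k/2$ for the $\omega$-cover in the individual-sequence (rather than measure-theoretic) setting. A related difficulty is that the Krieger encoding must fit through a binary factor $\gamma_*$: since $\widetilde{H}(x)$ can approach $2$ bits per source symbol, the coding density approaches $1$ bit per source position even after optimization, leaving only the uniform slack $\eta_k$ for the allocation machinery. Coordinating block markers, codewords, and atypical-block handling through the $\omega$-cover framework forms the technical core of the argument, and will likely rely on the Borel-Krieger machinery of Section~\ref{sec:Krieger}.
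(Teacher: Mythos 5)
The central step of your plan --- producing an $\omega$-cover $\{A_i\}\subseteq\sigma_T(\gamma)$ of $Y_k$ with $\sum_i\overline{s}(x,A_i)\leq 1-\eta_k/2$ and feeding it into Proposition \ref{prop:general-strategy}(a) --- cannot work, because the set $\{x\in\Regular(\Sigma^{\mathbb{Z}})\,:\,\widetilde{H}(x)<2\}$ supports shift-invariant probability measures: for instance it contains every generic point of an ergodic measure $\mu$ concentrated on finitely many symbols whose one-block entropy is below $2$. If $\{A_i\}$ is an $\omega$-cover then $\sum_i 1_{A_i}\equiv\infty$, so $\sum_i\mu(A_i)=\int\sum_i 1_{A_i}\,d\mu=\infty$, and by the ergodic theorem $\sum_i\overline{s}(x,A_i)=\sum_i\mu(A_i)=\infty$ for $\mu$-a.e.\ $x$. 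Thus hypothesis (a) (and likewise (b)) of Proposition \ref{prop:general-strategy} fails identically on the generic points of any invariant measure; the allocation machinery only applies to the measure-free parts of a system, which is precisely why this theorem requires a separate argument. Conceptually, the allocation encodes the entire static name of $x$ into the ``spare density'' left over by the cover, and a positive-entropy invariant measure leaves no spare density.

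What the paper actually does is deduce the statement from Theorem \ref{thm:generator-for-low-entropy-sequences} with $Q=4$, using $h(x)\leq\widetilde{H}(x)<2=\log 4$. That theorem is proved by a direct, measure-free Krieger-type coding: a relative generator theorem (Theorem \ref{thm:relative-Krieger}), established via the theory of types and a multiscale hierarchy of marker intervals, is applied telescopically to the alphabet truncations $\pi_n(x)$, with the entropy increments $h_n(x)-h_{n-1}(x)$ summing to less than $\log Q$, and with the shift-invariant data $P_x^{*}$ written into a low-density set by Lemma \ref{lem:coding-shift-invariant-data}. Your block/typical-set picture is the right spirit for that coding (Proposition \ref{prop:relative-type-theorem-higher-block-version} is exactly a finitistic empirical AEP), but two further gaps remain even there: a single block length with one codebook cannot encode the infinitely many rare symbols of a countable alphabet at bounded output density (hence the telescoping over truncations), and you never explain how the decoder learns the codebook $c_x$. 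Finally, your closing appeal to ``the Borel-Krieger machinery of Section \ref{sec:Krieger}'' is circular, since the theorem in question is that machinery.
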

This is a consequence of the more general Krieger-type theorem that
we state and prove given in Section \ref{sec:Krieger}. 

We summarize the discussion above in the language of partitions. 
\begin{cor}
\label{cor:reducing-entropy-DS-version}Let $\alpha$ be a countable
partition of a Borel system $(X,\mathcal{B},T)$. Let $X'$ denote
the set of points that are $\alpha$-regular and $\alpha$-deficient.
Then there exist partitions $\alpha',\beta\in\sigma_{T}(\alpha)$
of $X'$ such that every $x\in X'$ is $\alpha'$-regular and $\alpha'$-deficient,
$\beta$ has only four sets, and $\alpha'\in\sigma_{T}(\beta)$.\end{cor}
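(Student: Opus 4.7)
The plan is to transfer the two symbolic results Lemma \ref{lem:reducing-entropy} and Theorem \ref{thm:Krieger-generator-theorem-baby-case} back to $X'$ via the itinerary map $\alpha_{*}:X\to\Sigma^{\mathbb{Z}}$ of $\alpha$. Note that by definition of $X'$, $\alpha_{*}|_{X'}$ takes values in $\Regular(\Sigma^{\mathbb{Z}})\cap\Deficient(\Sigma^{\mathbb{Z}})$.

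First I would compose $\alpha_{*}|_{X'}$ with the factor map $\pi:\Deficient(\Sigma^{\mathbb{Z}})\to\Deficient(\mathbb{N}^{\mathbb{Z}})$ from Lemma \ref{lem:reducing-entropy}. Since $\pi$ sends regular points to regular points and satisfies $\widetilde{H}(\pi(y))<2$, the composition $\pi\circ\alpha_{*}$ maps $X'$ into the shift-invariant set $Y=\{y\in\Regular(\mathbb{N}^{\mathbb{Z}})\cap\Deficient(\mathbb{N}^{\mathbb{Z}}):\widetilde{H}(y)<2\}$. Define $\alpha'$ as the pullback of the length-one cylinder partition of $\mathbb{N}^{\mathbb{Z}}$ through $\pi\circ\alpha_{*}$. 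Concretely, the $n$-th atom of $\alpha'$ is $\{x\in X':\alpha(x)\in\Sigma_{n}(x)\}$, where the finite sets $\Sigma_{n}(x)\subseteq\Sigma$ are constructed from the frequencies $s(x,A_{i})$ of the atoms $A_{i}\in\alpha$ as in the proof of Lemma \ref{lem:reducing-entropy}. Because these frequencies are $T$-invariant measurable functions determined by $\alpha_{*}(x)$, they belong to $\sigma_{T}(\alpha)$, hence $\alpha'\in\sigma_{T}(\alpha)$. By construction every $x\in X'$ is $\alpha'$-regular and $\alpha'$-deficient, and the itinerary $\alpha'_{*}(x)$ lies in $Y$.

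Next I would apply Theorem \ref{thm:Krieger-generator-theorem-baby-case} to $(Y,S)$ to obtain a four-set Borel generator $\gamma=\{C_{1},C_{2},C_{3},C_{4}\}$ on $Y$. Pulling $\gamma$ back through $\alpha'_{*}$, set $\beta=\{(\alpha'_{*})^{-1}C_{i}\}_{i=1}^{4}$, a four-set partition of $X'$. Since $\alpha'_{*}$ is measurable with respect to $\sigma_{T}(\alpha')\subseteq\sigma_{T}(\alpha)$ and equivariant, we have $\beta\in\sigma_{T}(\alpha)$. Because $\gamma$ generates the Borel $\sigma$-algebra of $Y$ under $S$, its iterates in particular generate the length-one cylinder partition of $\mathbb{N}^{\mathbb{Z}}$ restricted to $Y$, so pulling back gives $\alpha'\in\sigma_{T}(\beta)$.

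There is no genuine obstacle here: this corollary is a bookkeeping exercise that packages Lemma \ref{lem:reducing-entropy} and Theorem \ref{thm:Krieger-generator-theorem-baby-case} in the language of partitions. The only places requiring attention are the two measurability verifications, namely that the symbol-grouping used in Lemma \ref{lem:reducing-entropy} is $\sigma_{T}(\alpha)$-measurable (immediate, since it depends only on the $T$-invariant frequencies $s(\cdot,A_{i})$) and that pulling back a Borel generator through an equivariant Borel map produces a Borel generator on the preimage (immediate from the definition of $\sigma_{T}$ and equivariance).
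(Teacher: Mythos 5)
Your proposal is correct and follows essentially the same route as the paper's own proof: compose the itinerary map $\alpha_{*}$ with the factor map of Lemma \ref{lem:reducing-entropy} to land in the set $Y$ of regular deficient points with $\widetilde{H}<2$, pull back the length-one cylinder partition to obtain $\alpha'$, then apply Theorem \ref{thm:Krieger-generator-theorem-baby-case} to $Y$ and pull back the resulting four-set generator to obtain $\beta$. The extra measurability remarks you include are correct but are exactly the routine verifications the paper leaves implicit.
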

\begin{proof}
Compose the itinerary map $\alpha_{*}$ with the map from Lemma \ref{lem:reducing-entropy},
so that the image of $X'$ is contained in the set $Y\subseteq\mathbb{N}^{\mathbb{Z}}$
of deficient, regular points $y$ satisfying $\widetilde{H}(y)<2$.
Let $\alpha'$ be the pull-back to $X'$ of the standard generating
partition of $\mathbb{N}^{\mathbb{Z}}$. Now apply the last theorem
to find a four-set partition for $Y$, and let $\beta$ be its pull-back
to $X'$.
\end{proof}

\subsection{\label{sub:Constructing-the-generator-def-case}Constructing the
generator}
\begin{thm}
\label{thm:generator-for-def}Let $(X,\mathcal{B},T)$ be a Borel
system and $\alpha=\{A_{i}\}_{i=1}^{\infty}$ a partition. Then $\deficient(\alpha)$
admits a $16$-set generator. \end{thm}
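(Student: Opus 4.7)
I would prove this by partitioning $\deficient(\alpha)$ into three $T$-invariant Borel pieces and handling each with a small finite generator. First, apply Corollary \ref{cor:iterated-reduction-of-defect-DS-version} with $\delta=1/2$ to produce a countable partition $\alpha'\in\sigma_T(\alpha)$ such that every $x\in\deficient(\alpha)$ is either $\alpha'$-divergent, or $\alpha'$-regular with $\sum_{A\in\alpha'}s(x,A)<1/2$. This lets me split $\deficient(\alpha)$ into invariant Borel pieces $X_1$ (the $\alpha'$-divergent points), $X_2$ (the $\alpha'$-regular $\alpha'$-null points, where some atom frequency vanishes), and $X_3$ (the $\alpha'$-regular $\alpha'$-deficient points, with all frequencies positive and summing to less than $1/2$).

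For $X_1$ and $X_2$, I reduce the countable unions $\bigcup_{A\in\alpha'}\divergent(A)$ and $\bigcup_{A\in\alpha'}\nul(A)\cap X_2$ to single sets $\divergent(B)$ and $\nul(C)$ via Lemma \ref{lem:reducing-unions-of-nul-and-div-sets} (and its obvious analogue for null sets), and then apply Theorems \ref{thm:generator-for-div} and \ref{thm:generator-for-nul} to obtain a $4$-set generator for each of these two invariant pieces.

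For $X_3$, I first apply Corollary \ref{cor:reducing-entropy-DS-version} to the partition $\alpha'$ on $X_3$, producing an entropy-reduced partition $\alpha''$ with $\widetilde{H}(\alpha''_*(x))<2$ and a $4$-set partition $\beta_3\in\sigma_T(\alpha')$ satisfying $\alpha''\in\sigma_T(\beta_3)$. Via Theorem \ref{thm:Krieger-generator-theorem-baby-case} (invoked inside that corollary) we have $\sigma_T(\beta_3)=\sigma_T(\alpha'')$, which in general is strictly smaller than $\mathcal{B}|_{X_3}$. To extend $\beta_3$ to a generator for $\mathcal{B}|_{X_3}$, I would apply Proposition \ref{prop:general-strategy} with an $\omega$-cover of $X_3$ lying in $\sigma_T(\alpha'')$ and satisfying condition (a) or (b) there. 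The cover should exploit both the low empirical entropy $\widetilde{H}<2$ of $\alpha''$ (to control the combinatorial complexity of typical cylinders) and the free mass exceeding $1/2$ in the deficient partition (to absorb the allocation). This produces a $2$-set partition $\beta_4$ with $\sigma_T(\beta_3)\lor\sigma_T(\beta_4)=\mathcal{B}|_{X_3}$, so $\beta_3\lor\beta_4$ is an $8$-set generator for $X_3$.

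Assembling the generators on the disjoint invariant pieces $X_1,X_2,X_3$ yields $4+4+8=16$ atoms. The main obstacle is constructing the $\omega$-cover for $X_3$: it must lie in $\sigma_T(\alpha'')$ (so the combined count stays at $8$), while satisfying Proposition \ref{prop:general-strategy}'s density hypothesis. Naive choices such as shifts $T^kA$ of $\alpha'$-atoms or plain cylinders of $\alpha''$ fail condition (b) because their summed upper uniform densities over finite index subsets are not uniformly less than $1-\sup\overline{s}^*$; the delicate step is to sieve the cylinders using the empirical-entropy estimates to retain only ``thin'' ones and verify the required uniform density bound.
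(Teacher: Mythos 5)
You have the right global architecture (split off a divergent bucket and a null bucket, each handled by a $4$-set generator; reduce the remaining regular deficient part to an $8$-set generator via the entropy-reduction corollary plus Proposition \ref{prop:general-strategy}), and your atom count $4+4+8=16$ matches. But the step you flag as "the main obstacle" --- producing the $\omega$-cover for the regular deficient part --- is precisely the crux of the paper's argument, and the mechanism you propose for it (sieving cylinders of $\alpha''$ by empirical-entropy estimates to verify condition (b)) is not the right one and is left entirely unconstructed. Note first that a single partition can never be an $\omega$-cover, since each point lies in exactly one atom; so a single application of Corollary \ref{cor:iterated-reduction-of-defect-DS-version} with $\delta=1/2$ cannot suffice. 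The missing idea is to \emph{iterate} that corollary with a summable sequence, say $\delta_{k}=2^{-(k+1)}$, applied to the countable partition $\alpha'$ produced by Corollary \ref{cor:reducing-entropy-DS-version}. This yields partitions $\alpha'_{k}\subseteq\sigma_{T}(\alpha')\subseteq\sigma_{T}(\beta)$ such that each surviving point is $\alpha'_{k}$-regular with $\sum_{A\in\alpha'_{k}}s(x,A)<2^{-(k+1)}$ (the points that come out $\alpha'_{k}$-divergent at some stage are siphoned into the divergent bucket). The union $\bigcup_{k}\alpha'_{k}$ is then automatically an $\omega$-cover --- every point lies in one atom of each $\alpha'_{k}$, hence in infinitely many sets --- and
\[
\sum_{k=1}^{\infty}\sum_{A\in\alpha'_{k}}\overline{s}(x,A)<\sum_{k=1}^{\infty}2^{-(k+1)}=\tfrac{1}{2}<1,
\]
which is condition (a) of Proposition \ref{prop:general-strategy}, not condition (b). Deficiency, not entropy, is what powers the cover; the role of the empirical-entropy bound $\widetilde{H}<2$ and Theorem \ref{thm:Krieger-generator-theorem-baby-case} is solely to guarantee that $\alpha'$ (and hence the whole cover) is measurable with respect to a $4$-set partition $\beta$, so that joining $\beta$ with the $2$-set partition from the allocation keeps the count at $8$.

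Two smaller points. First, the order of operations matters: the entropy reduction must be applied \emph{before} the iterated defect amplification, so that the partitions $\alpha'_{k}$ land inside $\sigma_{T}(\beta)$ for a finite $\beta$; iterating on the original $\alpha$ would leave you with a cover measurable only in $\sigma_{T}(\alpha)$, which need not be finitely generated. Second, each application of Corollary \ref{cor:iterated-reduction-of-defect-DS-version} (and indeed of Corollary \ref{cor:reducing-entropy-DS-version}) implicitly introduces new sets whose statistics at a given point may fail to exist or may vanish; these points must be peeled off at every stage and accumulated, via Lemmas \ref{lem:reducing-unions-of-nul-and-div-sets} and \ref{lem:disjointifying-nul-and-div-sets}, into the single sets $\nul(A')$ and $\divergent(A'')$ that your $X_{1},X_{2}$ are meant to capture --- a single up-front split as in your step 1 does not account for the irregularities created later in the construction.
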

\begin{proof}
Fix the partition $\alpha$. In the course of the proof we encounter
various sets with respect to which the statistical properties of a
given point are, a-priori, not known. Every time we encounter such
a set $A$ we implicitly separate out the points that are null or
divergent for it, and continue to work in the complement of $\nul(A)\cup\divergent(A)$.
At the end we will be left with an invariant measurable set $Y\subseteq\deficient(\alpha)$
and a sequences of sets $A_{1},A_{2},\ldots\subseteq\deficient(\alpha)\setminus Y$
such that 
\[
\deficient(\alpha)=Y\cup\bigcup_{I=1}^{\infty}\left(\nul(A_{i})\cup\divergent(A_{i})\right)
\]
By Lemmas \ref{lem:reducing-unions-of-nul-and-div-sets} and \ref{lem:disjointifying-nul-and-div-sets}
we can find disjoint invariant measurable sets $A',A''$ such that
\[
\bigcup_{i=1}^{\infty}\left(\nul(A_{i})\cup\divergent(A_{i})\right)=\nul(A')\cup\divergent(A'')
\]
By Theorems \ref{thm:generator-for-nul} and \ref{thm:generator-for-div}
there are $4$-set generators $\gamma'$ and $\gamma''$ of $\nul(A')$
and $\divergent(A'')$, respectively. Below we shall construct an
$8$-set generator $\gamma$ for $Y$. It then will follow that $\gamma\cup\gamma'\cup\gamma''$
is a 12-set generator for $\deficient(\alpha)$, as desired. 

We turn to the construction. 

First, separate out the points that are not $\alpha$-regular, i.e.
points that are not regular for some set in the countable algebra
generated by the $T$-translates of $\alpha$ (note that $x\in\deficient(\alpha)$
only ensures that $x$ is regular for every atom of $\alpha$). Denote
the complement of these points by $X'$.

Using Corollary \ref{cor:reducing-entropy-DS-version}, we find a
$4$-set partition $\beta$ of $X'$ and a countable partition $\alpha'\subseteq\sigma_{T}(\beta)$
such that every $x\in X'$ is $\alpha'$-deficient.

Applying Corollary \ref{cor:iterated-reduction-of-defect-DS-version}
to $\alpha'$ with $\delta_{k}=2^{-(k+1)}$, we obtain partitions
$\alpha'_{k}\subseteq\sigma_{T}(\alpha')$ of such that every $x\in X'$
is $\alpha'_{k}$-divergent or else it is $\alpha'_{k}$-regular and
satisfies $\rho(\alpha'_{k})<2^{-(k+1)}$. We separate out the points
in the divergent case. Let $Y$ denote the $T$-invariant set that
is left after doing this for all $k$.

Now, $\bigcup_{k=1}^{\infty}\alpha'_{k}$ is an $\omega$-cover of
$Y$, satisfies the hypothesis of Proposition \ref{prop:general-strategy},
and is measurable with respect to $\sigma_{T}(\beta)$. Since $\beta$
has four sets, by Proposition \ref{prop:general-strategy} there exists
an $8$-set generator $\gamma$ for $T|_{Y}$, as claimed.
\end{proof}

\section{\label{sec:Krieger}A generator theorem for countable partitions
of finite empirical entropy}

In this section we present a version of the Krieger generator theorem
for sequences over a countable alphabet which, in a certain sense,
have finite entropy. The main novelty is that the statement is ``measureless'',
and uses empirical frequencies. For points that are generic for an
ergodic shift-invariant probability measure this is a slight improvement
over the usual Krieger generator theorem, since it gives some additional
control of the exceptional set. More significantly, it applies also
in other cases. One non-trivial case is when the point is generic
for a non-ergodic measure of finite entropy, but the entropy of the
ergodic components is unbounded. Another interesting case occurs when
a point has well-defined frequencies for all words but is not generic
for any measure, e.g. the empirical frequencies of symbols do not
sum to $1$. It is this last case that is relevant in the proof of
Theorem \ref{thm:main}.

The theorem below is stronger than necessary for the application to
Theorem \ref{thm:main}, since for that purpose it would have been
enough to find a finite generator for the set of sequences $x$ satisfying
$\widetilde{H}(\alpha_{*}(x))<2$. But we have not found a significantly
simpler argument for this case. It is worth noting that recently Seward
\cite{Seward2012} proved a theorem of this type for probability-preserving
actions (of arbitrary groups) using an elegant argument that bears
some similarities to ours in the way data is ``moved around an orbit''.
However, he uses $\sigma$-additivity of the measure in an apparently
crucial way to bound the probability of those symbols that require
more than $n$ bits to encode, and this fails in our setting, where
the (implicit) measures are only finitely additive. This appears to
prevent his argument from working in the Borel category.

\subsection{Coding shift-invariant data}

The following will be used to encode information about the orbit of
a point $x\in\Sigma^{\mathbb{Z}}$, i.e. information that is shift
invariant. For instance, in a measure-preserving system it could be
used to encode the ergodic component to which $x$ belongs, or, in
our setting, the empirical frequencies of words in $x$. A similar
coding result for shift-invariant functions was obtained in a more
general setting in \cite[Section 9]{Tserunyan2015}.

It is convenient to consider the space of partially defined infinite
sequences sequences over a finite alphabet $\Sigma$, that is, elements
of $\Sigma^{I}$ for $I\subseteq\mathbb{Z}$. Given $x\in\Sigma^{I}$
we define the shift on it by $Sx\in\Sigma^{SI}$, $Sx(i)=x(i+1)$.
The space of partially defined sequences carries the usual measurable
structure.
\begin{lem}
\label{lem:coding-shift-invariant-data}Let $\Sigma$ be a finite
alphabet, $f:\Sigma_{AP}^{\mathbb{Z}}\rightarrow\{0,1\}^{\mathbb{N}}$
a shift-invariant function. Then to each $x,y\in\Sigma_{AP}^{\mathbb{Z}}$
and $I\subseteq\mathbb{Z}$ with $\underline{s}(I)>0$ one can associate
$z=z(x,y,I)\in\{0,1\}^{I}$ measurably and equivariantly (i.e. $(Sx,Sy,SI)\mapsto Sz$),
and such that $(x,z)$ determines $f(y)$.\end{lem}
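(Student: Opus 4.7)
The idea is that since $f$ is shift-invariant, the binary sequence $f(y)$ is a single orbit invariant, and we only need to ``broadcast'' each of its countably many bits into $z$ in a way that the decoder, equipped with $x$ and $z$ (whose domain reveals $I$), can locate and read each bit. Concretely, I will partition $I$ equivariantly into infinitely many subsets $I=J_0\sqcup J_1\sqcup J_2\sqcup\ldots$ of positive lower density, depending measurably and equivariantly only on $(x,I)$, and then set $z(i)=f(y)_k$ for every $i\in J_k$. Shift-equivariance of the partition, together with the invariance $f(Sy)=f(y)$, will give $z(Sx,Sy,SI)=Sz$.

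\textbf{Carrying out the construction.} First, apply Lemma \ref{lem:selecting-a-subset} iteratively: using $x\in\Sigma_{AP}^{\mathbb{Z}}$ as the aperiodicity source and $I$ as the starting set, for each $k\geq 1$ choose parameters $t_k^-<t_k^+$ in $(0,1)$ so that the telescoping product $\prod_{j<k}(1-t_j^+)$ remains bounded away from $0$ while $t_k^-\prod_{j<k}(1-t_j^+)>0$; this is easily arranged, e.g. with $t_k^{\pm}\approx 2^{-(k+1)}$. At stage $k$ apply Lemma \ref{lem:selecting-a-subset} to $(x,I\setminus\bigcup_{j<k}J_j,t_k^-,t_k^+)$ to obtain $J_k\subseteq I\setminus\bigcup_{j<k}J_j$ satisfying $\underline{s}(J_k)>0$. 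This produces disjoint subsets $J_1,J_2,\ldots\subseteq I$, all determined measurably from $(x,I)$ and each of positive lower density; let $J_0=I\setminus\bigcup_{k\geq 1}J_k$. Now define
\[
z(i) \;=\; f(y)_k \quad\text{whenever } i\in J_k \text{ for some } k\geq 1,
\]
and $z(i)=0$ for $i\in J_0$.

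\textbf{Verification.} Measurability of $(x,y,I)\mapsto z$ is clear since the selection procedure and $f$ are measurable. For equivariance, Lemma \ref{lem:selecting-a-subset} guarantees $(Sx,SI)\mapsto(SJ_k)_{k\geq 1}$, and $f(Sy)=f(y)$ by shift-invariance of $f$, so $i\in SJ_k$ iff $i+1\in J_k$, and both encodings assign the value $f(y)_k$; hence $z(Sx,Sy,SI)(i)=f(y)_k=z(x,y,I)(i+1)=(Sz)(i)$. For recoverability: given $x$ and $z\in\{0,1\}^I$, the domain $I$ of $z$ is visible, and the decoder reconstructs the same partition $(J_k)$ from $(x,I)$; since $\underline{s}(J_k)>0$, each $J_k$ is nonempty, and the constant value of $z$ on $J_k$ reveals $f(y)_k$. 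Varying $k$ recovers $f(y)\in\{0,1\}^{\mathbb{N}}$ in full.

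\textbf{Main obstacle.} The only delicate point is ensuring the iterative application of Lemma \ref{lem:selecting-a-subset} does not exhaust $I$ and leaves each $J_k$ with strictly positive lower density; this is a routine bookkeeping exercise with geometrically decaying parameters (essentially the non-uniform analogue of the bookkeeping carried out in the proof of Lemma \ref{lem:selecting-a-sequence-of-subsets}), and no new idea is required.
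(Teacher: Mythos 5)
Your proof is correct and follows essentially the same route as the paper: select an equivariant sequence of disjoint subsets $J_k\subseteq I$ of positive lower density from $(x,I)$ (the paper invokes Lemma \ref{lem:selecting-a-sequence-of-subsets} directly, with $\varepsilon_n=3^{-n}$, rather than redoing the iteration of Lemma \ref{lem:selecting-a-subset} by hand), write $f(y)_k$ constantly on $J_k$, and decode by reconstructing the $J_k$ from $(x,I)$.
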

\begin{proof}
Fix $y\in\Sigma_{AP}^{\mathbb{Z}}$ and $I\in2^{\mathbb{Z}}$ with
$\underline{s}(I)>0$. Let $\varepsilon_{n}=3^{-n}$ so that $\sum_{n=1}^{\infty}\varepsilon_{n}<1$.
Apply Lemma \ref{lem:selecting-a-sequence-of-subsets} to $x$,$I$
and $(\varepsilon_{n})_{n=1}^{\infty}$. We obtain disjoint sets $J_{0},J_{1},J_{2},\ldots\subseteq I$
with $\underline{s}(J_{n})\geq\varepsilon_{n}\underline{s}(I)$ and
in particular $J_{n}\neq\emptyset$. Let $J=\bigcup_{n=1}^{\infty}J_{n}$
and define $z\in\{0,1\}^{I}$ by $z|_{J_{n}}\equiv f(y)_{n}$ and
$z|_{I\setminus J}\equiv0$. Since $z$ determines $I$, and $x$
and $I$ determine $J_{1},J_{2},\ldots$, and $z|_{J_{n}}$ determines
$f(y)_{n}$ for all $n$, we see that $(x,z)$ determines $f(y)$.
Measurability and equivariance are immediate.
\end{proof}

\subsection{\label{sub:A-finite-coding-lemma}A finite coding lemma}

We require some standard facts from the theory of types. Let $\Delta$
be a finite set and for $x\in\Delta^{n}$ let $P_{x}\in\mathcal{P}(\Delta)$
denote the empirical distribution of digits in $x$, i.e. 
\[
P_{x}(a)=\frac{1}{n}\#\{1\leq i\leq n\,:\,x_{i}=a\}
\]
This is sometimes called the type of $x$. The type class of $x$
is the set of all sequences with the same empirical distribution:
\[
\mathcal{T}_{x}^{n}=\mathcal{T}_{x}^{n}(\Delta)=\{y\in\Delta^{n}\,:\,P_{x}=P_{y}\}
\]
The set of type classes of sequences of length $n$ is
\[
\mathcal{P}_{n}=\mathcal{P}_{n}(\Delta)=\{P_{y}\,:\,y\in\Delta^{n}\}
\]
The following standard combinatorial facts can be found e.g. in \cite[Theorems 11.1.1 and 11.1.3]{CoverThomas06}:
\begin{prop}
\label{prop:type-theorem}For every finite set $\Delta$ and $n\in\mathbb{N}$,
\[
|\mathcal{P}_{n}|\leq(n+1)^{|\Delta|}
\]
For every $x\in\Delta^{n}$, 
\[
\frac{1}{(n+1)^{|\Delta|}}\cdot2^{nH(P_{x})}\leq|\mathcal{T}_{x}^{n}|\leq2^{nH(P_{x})}
\]

\end{prop}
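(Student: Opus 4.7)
The plan is to use the standard method of types, as in Cover and Thomas. For the first inequality, I note that a type $P \in \mathcal{P}_n(\Delta)$ is uniquely determined by the vector $(nP(a))_{a \in \Delta}$ of non-negative integers, each in $\{0,1,\ldots,n\}$, so trivially $|\mathcal{P}_n| \leq (n+1)^{|\Delta|}$.

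For the upper bound on $|\mathcal{T}_x^n|$, I would consider the product probability measure $P_x^n$ on $\Delta^n$. Every sequence $y \in \mathcal{T}_x^n$ has exactly $nP_x(a)$ occurrences of each symbol $a$, so $P_x^n(\{y\}) = \prod_{a \in \Delta} P_x(a)^{nP_x(a)} = 2^{-nH(P_x)}$. Since $P_x^n(\mathcal{T}_x^n) \leq 1$, summing this equality over $y \in \mathcal{T}_x^n$ immediately gives $|\mathcal{T}_x^n| \leq 2^{nH(P_x)}$.

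For the lower bound, I would prove the extremality claim that $\mathcal{T}_x^n$ has the largest $P_x^n$-probability among all type classes, i.e. $P_x^n(\mathcal{T}_Q^n) \leq P_x^n(\mathcal{T}_x^n)$ for every $Q \in \mathcal{P}_n$. Once this is in hand, partitioning $\Delta^n$ by type gives
\[
1 = \sum_{Q \in \mathcal{P}_n} P_x^n(\mathcal{T}_Q^n) \leq |\mathcal{P}_n| \cdot P_x^n(\mathcal{T}_x^n) \leq (n+1)^{|\Delta|} \cdot |\mathcal{T}_x^n| \cdot 2^{-nH(P_x)},
\]
which rearranges to the desired lower bound.

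The main step is the extremality claim, and that is the only real obstacle. I would argue it by direct computation: writing $P = P_x$ for brevity, $|\mathcal{T}_Q^n| = n!/\prod_a (nQ(a))!$, so
\[
\frac{P_x^n(\mathcal{T}_P^n)}{P_x^n(\mathcal{T}_Q^n)} = \frac{|\mathcal{T}_P^n|}{|\mathcal{T}_Q^n|} \prod_{a \in \Delta} P(a)^{n(P(a)-Q(a))} = \prod_{a \in \Delta} \frac{(nQ(a))!}{(nP(a))!} \, P(a)^{n(P(a)-Q(a))}.
\]
Using the elementary inequality $m!/k! \geq k^{m-k}$ valid for all non-negative integers $m,k$ (checked by separately considering $m \geq k$ and $m < k$), applied to each factor with $m = nQ(a)$ and $k = nP(a)$, gives $(nQ(a))!/(nP(a))! \geq (nP(a))^{n(Q(a)-P(a))}$. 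Substituting and combining, the whole product is bounded below by $\prod_a n^{n(Q(a)-P(a))} = n^{n \sum_a (Q(a)-P(a))} = n^0 = 1$, which proves extremality and completes the proof.
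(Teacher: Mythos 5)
Your proof is correct and is essentially the argument the paper relies on: the paper does not prove this proposition itself but cites Cover--Thomas (Theorems 11.1.1 and 11.1.3), and your counting of types, the exact-probability computation $P_x^n(\{y\})=2^{-nH(P_x)}$ for $y\in\mathcal{T}_x^n$, and the extremality of the own-type class via the inequality $m!/k!\geq k^{m-k}$ reproduce precisely that standard method-of-types proof. The only (trivial) caveat worth noting is the degenerate case where $P_x(a)=0$ but $Q(a)>0$, in which $P_x^n(\mathcal{T}_Q^n)=0$ and extremality holds vacuously.
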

It follows that
\begin{cor}
\label{cor:generalized-type-theorem}For every finite set $\Delta$,
$n\in\mathbb{N}$ and $h>0$,
\[
\#\{x\in\Delta^{n}\,:\,H(P_{x})<h\}\leq O(n^{|\Delta|})\cdot2^{nh}
\]

\end{cor}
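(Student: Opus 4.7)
The plan is to partition the set $\{x \in \Delta^n : H(P_x) < h\}$ into type classes and then apply Proposition \ref{prop:type-theorem} twice. Two sequences lie in the same class $\mathcal{T}_x^n$ precisely when their empirical distributions agree, so the set in question decomposes as a disjoint union of those type classes $\mathcal{T}_x^n$ for which $H(P_x) < h$, indexed by a subset of $\mathcal{P}_n$.

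First I would bound the number of participating classes: by the first part of Proposition \ref{prop:type-theorem}, there are at most $|\mathcal{P}_n| \leq (n+1)^{|\Delta|}$ types to begin with, so certainly at most that many can satisfy $H(P) < h$. Next, by the upper bound in the second part of Proposition \ref{prop:type-theorem}, any type class $\mathcal{T}_x^n$ with $H(P_x) < h$ satisfies $|\mathcal{T}_x^n| \leq 2^{nH(P_x)} < 2^{nh}$. Multiplying the two bounds and absorbing the polynomial $(n+1)^{|\Delta|}$ into the $O(n^{|\Delta|})$ notation (since $|\Delta|$ is fixed) yields the claimed estimate $O(n^{|\Delta|}) \cdot 2^{nh}$. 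There is no real obstacle here — the corollary is a pure bookkeeping consequence of the type theorem.
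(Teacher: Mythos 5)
Your argument is correct and is exactly the intended derivation: the paper presents the corollary as an immediate consequence of Proposition \ref{prop:type-theorem}, obtained by bounding the number of types by $(n+1)^{|\Delta|}$ and each relevant type class by $2^{nH(P_x)}<2^{nh}$. Nothing to add.
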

For $x\in\Delta^{n}$ it is convenient to introduce a $\Delta$-valued
random variable $\xi_{x}$ whose distribution is $P_{x}$, i.e.
\[
\mathbb{P}(\xi_{x}=a)=P_{x}(a)
\]
Now suppose that $\Delta=\Delta_{1}\times\Delta_{2}$. Write $\xi^{1},\xi^{2}$
for the coordinate projections. These become random variables once
a probability measure is given on $\Delta$. For $x\in\Delta^{n}$
we identify $x$ with the pair of sequences $(x^{1},x^{2})\in\Delta_{1}^{n}\times\Delta_{2}^{n}$
obtained from the first and second coordinates of each symbol, respectively.
Then $P_{x}\in\mathcal{P}(\Delta_{1}^{n}\times\Delta_{2}^{n})$ and
$\xi_{x}=(\xi_{x^{1}},\xi_{x^{2}})$ is a coupling of $\xi_{x^{1}},\xi_{x^{2}}$,
which we denote for ease of reading by $(\xi_{x}^{1},\xi_{x}^{2})$.

Given a pair of discrete random variables $X,Y$, we use the slightly
non-standard notation 
\[
H(X|Y=y)=-\sum_{x}\mathbb{P}(X=x|Y=y)\log\mathbb{P}(X=x|Y=y)
\]
so that $H(X|Y)=\sum_{y}\mathbb{P}(Y=y)H(X|Y=y)$. We also use subscripts
to indicate the probability distribution when necessary, as in $H_{P}(\xi^{1}|\xi^{2}=a)$.

Finally, we endow $\mathcal{P}(\Delta)$ with the $\ell^{1}$ metric:
for $P,Q\in\mathcal{P}(\Delta)$ let 
\[
\left\Vert P-Q\right\Vert =\sum_{a\in\Delta}|P(a)-Q(a)|
\]

\begin{prop}
\label{prop:relative-type-theorem}Let $\Delta=\Delta_{1}\times\Delta_{2}$.
For every $\varepsilon>0$ there exists a $\delta>0$ such that for
every $n$ the following holds. Let $P\in\mathcal{P}(\Delta)$ and
let $I_{1},\ldots,I_{m}\subseteq[1,n]$ be disjoint intervals such
that $J=[1,n]\setminus\bigcup I_{i}$ satisfies $|J|>\varepsilon n$.
Let $y\in\Delta_{1}^{n}$ be a fixed sequence, and let $\Lambda=\Lambda(y,I_{1},\ldots,I_{m})\subseteq\Delta^{n}$
denote the set of sequences $x=(y,z)\in\Delta^{n}$ whose first component
is the given sequence $y$, and such that $\left\Vert P_{x}-P\right\Vert <\delta$
and $\left\Vert P_{x|_{I_{i}}}-P\right\Vert <\delta$ for every $1\leq i\leq m$.
Then 
\[
|\{z\,:\,(y,z)\in\Lambda\}|<O(n^{|\Delta_{1}||\Delta_{2}|})2^{|J|\cdot(H_{P}(\xi^{2}|\xi^{1})+\varepsilon)}
\]
In particular if $n$ is large enough relative to $\varepsilon$,
then we can ensure
\[
|\{z\,:\,(y,z)\in\Lambda\}|<2^{|J|\cdot(H_{P}(\xi^{2}|\xi^{1})+\varepsilon)}
\]
\end{prop}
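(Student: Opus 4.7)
The plan is to run a relative (conditional) type-counting argument, using Proposition \ref{prop:type-theorem} for the basic exponential bound on type-class sizes and the continuity of conditional entropy to introduce the $\varepsilon$-slack. Nothing deep enters; the role of the hypothesis $|J|>\varepsilon n$ is to ensure that the closeness conditions on $[1,n]$ and on each $I_i$ transfer to a closeness condition on $J$, which is where the counting really happens.

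First I would transfer the hypotheses to $J$. Writing $I=\bigcup_i I_i$ and using the convex combination
\[
P_x \;=\; \tfrac{|J|}{n}\,P_{x|_J} + \sum_{i=1}^m \tfrac{|I_i|}{n}\,P_{x|_{I_i}},
\]
together with $\|P_x-P\|<\delta$, $\|P_{x|_{I_i}}-P\|<\delta$ for each $i$, and $|J|>\varepsilon n$, the triangle inequality gives $\|P_{x|_J}-P\|\le 2\delta/\varepsilon$. So the joint empirical type $Q$ of the length-$|J|$ pair $(y|_J,z|_J)\in\Delta^{|J|}$ lies in an $\ell^1$-ball of radius $2\delta/\varepsilon$ around $P$.

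Next, for each admissible joint type $Q$ I would count the number of $z|_J\in\Delta_2^{|J|}$ with $(y|_J,z|_J)$ of type $Q$. Grouping the positions of $y|_J$ according to the symbol $a\in\Delta_1$, each admissible $z|_J$ restricts on each group to a sequence of prescribed empirical distribution $Q(\cdot\mid\xi^1{=}a)$; applying the upper bound of Proposition \ref{prop:type-theorem} blockwise and using $\sum_a Q(a)H_Q(\xi^2\mid\xi^1{=}a)=H_Q(\xi^2\mid\xi^1)$ produces at most $2^{|J|\,H_Q(\xi^2\mid\xi^1)}$ such $z|_J$. By uniform continuity of $Q\mapsto H_Q(\xi^2\mid\xi^1)$ on $\mathcal{P}(\Delta)$, I can pick $\delta=\delta(\varepsilon)$ so that $\|Q-P\|\le 2\delta/\varepsilon$ implies $H_Q(\xi^2\mid\xi^1)<H_P(\xi^2\mid\xi^1)+\varepsilon$. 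Summing over the at most $(|J|+1)^{|\Delta_1||\Delta_2|}=O(n^{|\Delta_1||\Delta_2|})$ possible joint types yields the stated bound. The ``in particular'' clause then follows: since $|J|>\varepsilon n$, for $n$ large (depending only on $\varepsilon,|\Delta_1|,|\Delta_2|$) one has $|\Delta_1||\Delta_2|\log_2 n = o(|J|\varepsilon)$, so the polynomial prefactor is absorbed by enlarging $\varepsilon$ slightly.

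The main point to be careful about is the interpretation of the count when $z$ is regarded as a full element of $\Delta_2^n$: naively the freedom on each $I_i$ would add $|I_i|(H_P(\xi^2\mid\xi^1)+\varepsilon)$ to the exponent and produce $n$ rather than $|J|$ there. This is reconciled by reading the bound as a count of the restrictions $z|_J$ (with the values on $I$ absorbed into the auxiliary data used elsewhere in the Krieger-style coding), so that only $J$ contributes to the exponential. The continuity-of-conditional-entropy step is routine, and the weighted-average identity above is the one technical step carrying the hypothesis $|J|>\varepsilon n$.
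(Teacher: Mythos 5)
Your proposal is correct and follows essentially the same route as the paper's proof: transfer the type-closeness to $J$ via the convex-combination identity (using $|J|>\varepsilon n$), do a conditional type count of $z|_J$ grouped by the symbols of $y|_J$, control the conditional entropy by continuity, and absorb the polynomial number of types. Your reading of the count as a count of restrictions $z|_J$ is also the intended one (as the higher-block version of the proposition makes explicit), so the caveat in your last paragraph is resolved exactly as the paper does.
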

\begin{proof}
Using the continuity of the entropy function and the marginal probability
function on the simplex of measures on $\Delta$, we can choose $\delta_{0}>0$
so that if $Q\in\mathcal{P}(\Delta)$ and $|Q-P|<\delta_{0}$, then
$Q(\xi^{1}=a)\neq0$ if and only if $P(\xi^{1}=a)\neq0$, and $|H_{P}(\xi^{2}|\xi^{1}=a)-H_{Q}(\xi^{2}|\xi^{1}=a)|<\varepsilon/2$
for these $a$. We also assume that $\delta_{0}\log|\Delta_{2}|\leq\varepsilon/2$.

Set $\delta=\varepsilon\delta_{0}/3$.

Fix $y$ and consider $x=(y,z)$ as in the statement. Consider $u=x|_{J}$
and $v=x|_{[0,n]\setminus J}$ as new sequences. Note that $P_{v}=\sum\alpha_{i}\cdot P_{x|_{I_{i}}}$
where $\alpha_{i}=|I_{i}|/\sum|I_{i}|$, so 
\[
\left\Vert P_{v}-P\right\Vert \leq\sum\alpha_{i}\cdot\left\Vert P_{x|_{I_{I}}}-P\right\Vert <\sum\alpha_{i}\delta=\delta
\]
Therefore 
\[
\left\Vert P_{x}-P_{v}\right\Vert \leq\left\Vert P_{x}-P\right\Vert +\left\Vert P-P_{v}\right\Vert <2\delta
\]
Similarly $P_{x}=\frac{|J|}{n}P_{u}+(1-\frac{|J|}{n})P_{v}$, so 
\begin{eqnarray*}
P_{u} & = & \frac{n}{|J|}(P_{x}-(1-\frac{|J|}{n})P_{v})\\
 & = & \frac{n}{|J|}(P_{x}-(1-\frac{|J|}{n})P_{x}+(1-\frac{|J|}{n})(P_{x}-P_{v}))\\
 & = & P_{x}+(\frac{n}{|J|}-1)(P_{x}-P_{v})\\
 & = & P+(P_{x}-P)+(\frac{n}{|J|}-1)(P_{x}-P_{v})
\end{eqnarray*}
Since $|J|>\varepsilon n$,
\[
\left\Vert P_{u}-P\right\Vert <\delta+(\frac{1}{\varepsilon}-1)2\delta<\frac{3\delta}{\varepsilon}<\delta_{0}
\]

Now for $a\in\Delta$ let $J_{a}=\{j\in J\,:\,z=a\}$. By choice of
$\delta_{0}$ and the fact that $\left\Vert P-P_{u}\right\Vert <\delta_{0}$
we have $J_{a}\neq\emptyset$ if and only if $P(\xi^{1}=a)\neq0$,
and for such $a$,
\[
|H(\xi_{u}^{2}|\xi_{u}^{1}=a)-H_{P}(\xi^{2}|\xi^{1}=a)|<\frac{\varepsilon}{2}
\]
Writing $u=(u^{1},u^{2})\in\Delta_{1}^{J}\times\Sigma_{2}^{J}$, this
means that 
\[
u^{2}|_{J_{a}}\in\{w\in\Delta_{2}^{J_{a}}\;:\;H(P_{w})<H_{P}(\xi^{2}|\xi^{1}=a)+\frac{\varepsilon}{2}\}
\]
so by Corollary \ref{cor:generalized-type-theorem} the number of
choices for $u^{2}|_{J_{a}}$ is $O(|J_{a}|^{|\Delta_{2}|})2^{|J_{a}|\cdot(H_{P}(\xi^{2}|\xi^{1}=a)+\varepsilon/2)}$.
Multiplying over all $a$ such that $J_{a}\neq\emptyset$, the number
of possible values for $u^{2}$ is
\begin{eqnarray*}
\prod_{a}\{z|_{J_{a}}\,:\,(x,z)\in\Lambda\} & = & \prod_{a}O(|J_{a}|^{|\Delta_{2}|})2^{|J_{a}|\cdot(H_{P}(X_{1}|X_{2}=a)+\varepsilon/2)}\\
 & = & O(n^{|\Delta_{1}||\Delta_{2}|})2^{\sum_{a}|J_{a}|\cdot(H_{P}(\xi^{2}|\xi^{1}=a)+\varepsilon/2)}\\
 & = & O(n^{|\Delta_{1}||\Delta_{2}|})2^{|J|(\sum_{a}P(\xi_{u}^{1}=a)\cdot H_{P}(\xi^{2}|\xi^{1}=a)+\varepsilon/2)}\\
 & = & O(n^{|\Delta_{1}||\Delta_{2}|})2^{|J|(\sum_{a}P(X_{1}=a)\cdot H_{P}(\xi^{2}|\xi^{1}=a)+\delta_{0}\log|\Delta_{2}|+\varepsilon/2)}
\end{eqnarray*}
where in second line we used the identity $P(\xi_{u}^{1}=a)=|J_{a}|/|J|$,
and in the last line we used the fact that $\left\Vert P_{u}-P\right\Vert <\delta_{0}$
implies that $|P(\xi_{u}^{1}=a)-P(X_{1}=a)|<\delta_{0}$ and $H_{P}(\xi^{2}|\xi^{1}=a)\leq\log|\Delta_{2}|$.
Since we chose $\delta_{0}$ to satisfy $\delta_{0}\log|\Delta_{2}|<\varepsilon/2$
the proof of the first statement is complete. The second statement
follows, since by the assumption $|J|>\varepsilon n$ we have $O(n^{|\Delta_{1}|\cdot|\Delta_{2}|})=2^{O(\log n)}=2^{o(|J|)}$.
\end{proof}
We shall require a slightly stronger version of the proposition above
that works with the empirical frequencies of $k$-tuples, rather than
of individual symbols. For $x=x_{1}\ldots x_{n}$ and $k\leq n$ define
the $k$-th higher block code of $x$ to be the sequence $x^{(k)}=x_{1}^{(k)}\ldots x_{n-k}^{(k)}$
where
\[
x_{i}^{(k)}=x_{i}x_{i+1}\ldots x_{i+k-1}
\]

\begin{prop}
\label{prop:relative-type-theorem-higher-block-version}Let $\Delta=\Delta_{1}\times\Delta_{2}$.
For every $\varepsilon>0$ and $k$ there exists a $\delta>0$ such
that for every $n$ the following holds. Let $P\in\mathcal{P}(\Delta^{k})$
and let $I_{1},\ldots,I_{m}\subseteq[1,n-k+1]$ be disjoint intervals
of length at least $\ell$ such that $J=[1,n-k+1]\setminus\bigcup I_{i}$
satisfies $|J|>\varepsilon n$. Let $y\in\Delta_{1}^{n}$ be a fixed
sequence, and let $\Lambda=\Lambda(y,I_{1},\ldots,I_{m})\subseteq\Delta^{n}$
denote the set of sequences $x=(y,z)\in\Delta^{n}$ whose first component
is the given sequence $y$, and such that $\left\Vert P_{x^{(k)}}-P\right\Vert <\delta$
and $\left\Vert P_{(x|_{I_{i}})^{(k)}}-P\right\Vert <\delta$ for
every $1\leq i\leq m$. Then 
\[
|\{z|_{J}\,:\,(y,z)\in\Lambda\}|<O(n^{|\Delta_{1}||\Delta_{2}|})\cdot2^{|J|\cdot(\frac{1}{k}H_{P}(\xi^{2}|\xi^{1})+\varepsilon)}
\]
In particular if $n$ is large enough relative to $\varepsilon$,
then we can ensure
\[
|\{z|\,:\,(y,z)\in\Lambda\}|<2^{|J|\cdot(\frac{1}{k}H_{P}(\xi^{2}|\xi^{1})+\varepsilon)}
\]
\end{prop}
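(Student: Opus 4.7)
The plan is to reduce Proposition 8.3 to Proposition 8.2 by passing to the higher-block code. View $x^{(k)} = (y^{(k)}, z^{(k)})$ as a sequence of length $n-k+1$ over the alphabet $\Delta' := \Delta^k = \Delta_1^k \times \Delta_2^k$, with fixed first coordinate $y^{(k)}$. The type constraints on $x$ translate to type constraints on $x^{(k)}$: $\|P_{x^{(k)}} - P\| < \delta$ is already of the right form, while $\|P_{(x|_{I_i})^{(k)}} - P\| < \delta$ agrees with $\|P_{(x^{(k)})|_{I_i}} - P\|$ up to a boundary error of order $k/|I_i|$ which, for intervals of length at least $\ell \gg k/\delta$, can be absorbed into the threshold.

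A direct application of Proposition 8.2 to $x^{(k)}$ over the enlarged alphabet yields only a bound with exponent $|J|(H_P(\xi^2|\xi^1)+\varepsilon)$, too weak by a factor of $k$. The $\frac{1}{k}$ factor in the stated bound reflects the substantial redundancy in the higher-block code: neighbouring blocks $x^{(k)}_i$ and $x^{(k)}_{i+1}$ share $k-1$ entries of $x$. To exploit this I partition $[1,n-k+1]$ into the $k$ arithmetic progressions $K_r = (k\mathbb{Z}+r)\cap[1,n-k+1]$; along each $K_r$ the blocks $\{x^{(k)}_i : i\in K_r\}$ are pairwise disjoint subwords of $x$. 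A pigeonhole on the (approximate) identity $P_{x^{(k)}} = \frac{1}{k}\sum_r P_{x^{(k)}|_{K_r}}$ produces a residue $r^*$ for which the empirical distributions on $K_{r^*}$ and on each $I_i\cap K_{r^*}$ remain within $O(k\delta)$ of $P$.

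Invoking Proposition 8.2 on the subsequence $x^{(k)}|_{K_{r^*}}$ with alphabet $\Delta'$, sub-intervals $I_i\cap K_{r^*}$ and complement $J_{r^*} := J\cap K_{r^*}$, gives $|\{z^{(k)}|_{J_{r^*}}\}| \leq 2^{|J_{r^*}|(H_P(\xi^2|\xi^1)+\varepsilon')}$. Because $J_{r^*}$ is $k$-separated, the restriction $z^{(k)}|_{J_{r^*}}$ determines $z$ on the disjoint union $J_{r^*}^+ := \bigsqcup_{i\in J_{r^*}}[i,i+k-1]$, of size $k|J_{r^*}|$; rewriting the exponent using $|J_{r^*}^+|=k|J_{r^*}|$ yields $|\{z|_{J_{r^*}^+}\}| \leq 2^{|J_{r^*}^+|\cdot\frac{1}{k}(H_P(\xi^2|\xi^1)+\varepsilon')}$, so the desired $\frac{1}{k}$ factor has appeared.

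The main obstacle will be assembling this per-residue estimate into the stated bound on $|\{z : (y,z)\in\Lambda\}|$ without losing the $\frac{1}{k}$ gain. A naive product of the per-residue bounds over $r=0,\ldots,k-1$ reintroduces a factor of $k$ in the exponent and cancels the improvement, so I would avoid this and work with a single well-chosen $r^*$: the positions of $J$ not covered by $J_{r^*}^+$ form a residual set whose contribution is controlled by the type-class estimates applied to the intervals $I_i$ (since Proposition 8.2 for the enlarged alphabet already bounds $z$ on $\bigcup_i I_i$), and whose size-$k$ boundary effects are absorbed into the polynomial prefactor $O(n^{|\Delta_1||\Delta_2|})$. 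The precise bookkeeping of this combinatorial assembly, and the verification that the pigeonhole really does produce a single $r^*$ meeting all the needed sub-interval type conditions simultaneously, is where the bulk of the technical work lies.
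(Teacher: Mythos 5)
Your overall strategy is the same as the paper's: split into residue classes modulo $k$, use the disjointness of the $k$-blocks along a single residue class to convert a count of $z^{(k)}$-patterns into a count of $z$-patterns on $k$ times as many positions (this is where the $\frac{1}{k}$ comes from), select one good residue by pigeonhole, and dispose of the small leftover set. However, there is a genuine gap at the step you yourself flag as "the bulk of the technical work": the pigeonhole does \emph{not} produce a single $r^{*}$ for which the empirical distributions on $K_{r^{*}}$ and on every $I_{i}\cap K_{r^{*}}$ are simultaneously close to $P$. The identity $P_{x^{(k)}}=\frac{1}{k}\sum_{r}P_{x^{(k)}|_{K_{r}}}$ (and its analogue for each $I_{i}$) only controls averages over $r$; the triangle inequality runs the wrong way, and $x$ restricted to a given $I_{i}$ can have wildly different statistics on different residue classes while the average is fine. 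With $m$ intervals each potentially having its own bad residues, no single $r^{*}$ need satisfy all $m+1$ conditions. This is not a bookkeeping issue but a false claim, so the plan of invoking Proposition \ref{prop:relative-type-theorem} as a black box on $x^{(k)}|_{K_{r^{*}}}$, with sub-intervals $I_{i}\cap K_{r^{*}}$, does not go through.

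The repair, which is what the paper does, is to observe that the sub-interval conditions along the chosen residue class are never needed. The hypotheses on the $I_{i}$ are used only once, via convexity, to conclude $\left\Vert P_{x^{(k)}|_{J}}-P\right\Vert <\delta_{0}$; after that one pigeonholes only on $P_{x^{(k)}|_{J}}=\sum_{r}\frac{|J_{r}|}{|J|}P_{x^{(k)}|_{J_{r}}}$ to get a single $J_{r^{*}}=J\cap(k\mathbb{Z}+r^{*})$ with $\left\Vert P_{x^{(k)}|_{J_{r^{*}}}}-P\right\Vert <\delta_{0}$, and then reruns the type-counting core of the previous proof (partition $J_{r^{*}}$ by the value of $y^{(k)}$ and apply Corollary \ref{cor:generalized-type-theorem}) directly, rather than citing Proposition \ref{prop:relative-type-theorem} with its interval hypotheses. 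A second, smaller inaccuracy: the residual set $J\setminus(J_{r^{*}}+[0,k-1])$ is contained in $m$ intervals of length at most $k$ abutting the $I_{i}$, so it has size up to $mk\leq nk/\ell$ -- linear in $n$, not polylogarithmic -- and therefore cannot be absorbed into the prefactor $O(n^{|\Delta_{1}||\Delta_{2}|})$; it must be killed by taking $\ell$ large so that the trivial bound $|\Delta_{2}|^{mk}$ fits inside the $\varepsilon$ slack in the exponent.
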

\begin{proof}
The idea of the proof is very similar to the previous one, we mention
only the new ingredients. 

As before, using uniform continuity of the functions involved on the
simplex of measures on $\Delta$, choose $\delta_{0}>0$ so that if
$Q\in\mathcal{P}(\Delta^{k})$ and $|Q-P|<\delta_{0}$ then $Q(\xi^{1}=a)\neq0$
if and only if $P(\xi^{1}=a)\neq0$, and $|H_{P}(\xi^{2}|\xi^{1}=a)-H_{Q}(\xi^{2}|\xi^{1}=a)|<\varepsilon/4$
for these $a$. Assume further that $k\delta_{0}\log|\Delta_{2}|\leq\varepsilon/2$.

Choose $\delta$ small enough that the hypothesis implies $\left\Vert P_{x^{(k)}|_{J}}-P\right\Vert <\delta_{0}$.
This argument is identical to the one in the previous proof and is
based on writing $P_{x^{(k)}}$ as a convex combination of $P_{x^{(k)}|_{J}}$
and the $P_{x^{(k)}|_{I_{i}}}$.

Split $J$ into congruence classes modulo $k$: For each $0\leq r<k$,
let $J_{r}=J\cap(k\mathbb{Z}+r)$. Observe that $x^{(k)}|_{J_{j}}$
determines $x|_{J_{j}+[0,k-1]}$, which does not yet determine $x|_{J}$,
but almost: one easily checks that $J\setminus(J_{j}+[0,k-1])$ is
contained in $m$ intervals of length $k$ that share an endpoint
with one of the intervals $I_{i}$, and these have total length at
most $mk$. Therefore the symbols in $x$ that are not determined
by $x^{(k)}|_{J_{j}}$ constitute at most a $mk/n$-fraction of the
symbols in $[1,n]$. Since the intervals $I_{1},\ldots,I_{m}$ each
have length at least $\ell$ and are contained in $[1,n]$, we have
$m\leq n/\ell$, so $mk/n$ can be made arbitrarily small by making
$\ell$ large. Thus we can assume that for each $j$ the number of
possibilities for $x|_{J\bigtriangleup(J_{j}+[0,k-1])}$ is at most
$2^{\varepsilon^{2}n/2}$. 

Using the relation $P_{x^{(k)}|_{J}}=\sum_{i=0}^{k-1}\frac{|J_{i}|}{|J|}P_{x^{(k)}|_{J_{i}}}$
and $\left\Vert P_{x^{(k)}|_{J}}-P\right\Vert <\delta_{0}$, it follows
that there is some $i$ with $\left\Vert P_{x^{(k)}|_{J_{i}}}-P\right\Vert <\delta_{0}$.
Arguing exactly as in the previous proof, it follows that for this
$i$,  
\begin{eqnarray*}
\mbox{\# possibilities for }z|_{J_{i}+[0,k-1]} & = & \mbox{\# possibilities for }z^{(k)}|_{J_{i}}\\
 & = & O(n^{|\Delta_{1}||\Delta_{2}|})2^{|J_{i}|\cdot(H_{P}(\xi^{2}|\xi^{1})+\varepsilon/4)}\\
 & = & O(n^{|\Delta_{1}||\Delta_{2}|})2^{|J_{i}+[0,k-1]|\cdot(\frac{1}{k}H_{P}(\xi^{2}|\xi^{1})+\varepsilon/4)}\\
 & = & O(n^{|\Delta_{1}||\Delta_{2}|})2^{|J|\cdot(\frac{1}{k}H_{P}(\xi^{2}|\xi^{1})+\varepsilon/2)}
\end{eqnarray*}
where in the last line we used the fact that $k|J_{i}|\leq|J|+O(mk/n)$
and, as explained earlier, by making $\ell$ we can ensure $mk/n<\varepsilon/4$
. Putting it all together, recalling that the number of possibilities
for $x|_{J\setminus(J_{i}+[0,k-1])}$ is $2^{\varepsilon^{2}n/2}<2^{\varepsilon|J|/2}$,
we have the desired bound.y
\end{proof}

\subsection{A relative generator theorem}

In this section we consider regular points $x\in(\Sigma_{1}\times\Sigma_{2})^{\mathbb{Z}}$
with $\Sigma_{1},\Sigma_{2}$ finite, writing them as $x=(y,z)\in\Sigma_{1}^{\mathbb{Z}}\times\Sigma_{2}^{\mathbb{Z}}$.
Regularity means in particular that $x$ determines a distribution
on $\Sigma_{1}\times\Sigma_{2}$ by $P_{x}(a,b)=s(x,(a,b))$, and
also a function $P_{x}^{*}:(\Sigma_{1}\times\Sigma_{2})^{*}\rightarrow[0,1]$
given by $a\mapsto s(x,a)$ which extends to a shift-invariant $\sigma$-finite
probability measure $\mu_{x}$ on $(\Sigma_{1}\times\Sigma_{2})^{\mathbb{Z}}$
(this extensibility relies crucially on the fact that the alphabet
is finite). As in the last section, we write $\xi_{x}=(\xi_{x}^{1},\xi_{x}^{2})$
for the random variable with distribution $P_{x}$. Regularity of
$x=(y,z)$ implies regularity of $y$ and $z$, so we have $\xi_{x}^{1}=\xi_{y}$
and $\xi_{x}^{2}=\xi_{z}$. Write
\[
H(x)=H(\xi_{x})
\]
Extending the notation of the previous section we denote by $x^{(k)}$
the (infinite) sequence whose $i$-th symbol is $x_{i}^{(k)}=x_{i}x_{i+1}\ldots x_{i+k-1}$.
Regularity of $x$ implies that also $x^{(k)}$ is regular for all
$k$, hence $H(\xi_{x^{(k)}})$ is defined. Since $\xi_{x^{(k+m)}}$
is a coupling of $\xi_{x^{(k)}}$ and $\xi_{x^{(m)}}$ we have $H(\xi_{x^{(k+m)}})\leq H(\xi_{x^{(k)}})+H(\xi_{x^{(m)}})$
and so the limit
\[
h(x)=\lim_{k\rightarrow\infty}\frac{1}{k}H(\xi_{x^{(k)}})
\]
exists by sub-additivity. Of course, this is just the Kolmogorov-Sinai
entropy of $\mu_{x}$. In the same manner we define $h(z)$, and set
\begin{eqnarray*}
h(x|z) & = & h(x)-h(z)\\
 & = & \lim_{k\rightarrow\infty}\left(\frac{1}{k}H(\xi_{x^{(k)}})-\frac{1}{k}H(\xi_{z^{(k)}})\right)\\
 & = & \lim_{k\rightarrow\infty}\frac{1}{k}\left(H(\xi_{x^{(k)}}|\xi_{z^{(k)}})\right)
\end{eqnarray*}
which is, again, the entropy of $\mu_{x}$ relative to the factor
determined by the second coordinate.
\begin{thm}
\label{thm:relative-Krieger}Let $2\leq Q\in\mathbb{N}$ and $\Sigma_{1},\Sigma_{2}$
finite alphabets. To every $x=(y,z)\in(\Sigma_{1}\times\Sigma_{2})_{AP}^{\mathbb{Z}}$
such that $z$ is aperiodic and $h(x|z)<\log_{2}Q$, and to every
$I\subseteq\mathbb{Z}$ such that $\underline{s}^{*}(I)>\frac{1}{\log Q}h(x|z)$,
one can associate $w\in\{1,\ldots,Q\}^{I}$ such that the map $(x,I)\mapsto w$
is measurable and equivariant, and $(P_{x}^{*},z,w)$ determines $x$
(equivalently, $y$). 
\end{thm}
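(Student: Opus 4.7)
The plan is to emulate the proof of Krieger's finite generator theorem, using the empirical relative-type bound of Proposition \ref{prop:relative-type-theorem-higher-block-version} in place of Shannon--McMillan--Breiman. Fix $\varepsilon > 0$ so small that $h(x|z) + 3\varepsilon < \log Q$ while $\underline{s}^*(I) > (h(x|z) + 3\varepsilon)/\log Q$; choose $k$ with $\tfrac{1}{k} H(\xi_{x^{(k)}}|\xi_{z^{(k)}}) < h(x|z) + \varepsilon$; let $\delta = \delta(\varepsilon,k)$ be the constant supplied by Proposition \ref{prop:relative-type-theorem-higher-block-version}; and fix $n \gg k$. Applying Proposition \ref{prop:two-symbol-AP-factor} to the aperiodic sequence $z$ produces a shift-equivariant partition of $\mathbb{Z}$ into consecutive blocks $\{B_j\}$ of length $n$ or $n+1$, measurable in $z$.

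Call a block $B$ \emph{good} if $\|P^{(k)}_{x|_B} - P^{(k)}_x\| < \delta$, where $P^{(k)}_x$ is read off from $P^*_x$, and \emph{bad} otherwise. Regularity of $x$ implies $\tfrac{1}{M}\sum_{j=1}^M P^{(k)}_{x|_{B_j}} \to P^{(k)}_x$, and a standard averaging argument shows that, for $n$ large, the density of bad blocks is below any preassigned $\varepsilon'$. On each good block $B$, Proposition \ref{prop:relative-type-theorem-higher-block-version}, applied with the roles of the coordinates reversed (so that $z|_B$ is the known factor and $y|_B$ the unknown), bounds the number of candidate $y' \in \Sigma_1^B$ for which $(y',z|_B)$ has $k$-block type within $\delta$ of $P^{(k)}_x$ by
\[
2^{|B|\left(\tfrac{1}{k} H(\xi_{x^{(k)}}|\xi_{z^{(k)}}) + \varepsilon\right)} \;\leq\; Q^{|B|(h(x|z)+2\varepsilon)/\log Q}.
\]
Enumerate these candidates in a canonical order (say lexicographic), giving a bijection between the list and an initial segment of $Q$-ary strings of length $\lceil |B|(h(x|z)+2\varepsilon)/\log Q \rceil$.

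Using Lemma \ref{lem:selecting-a-sequence-of-subsets} (the uniform-density version), split $I$ equivariantly into disjoint $I_0, I_1$ with $\underline{s}^*(I_0) \geq \varepsilon$ and $\underline{s}^*(I_1) \geq (h(x|z)+2\varepsilon)/\log Q$. Define $w \in \{1,\ldots,Q\}^I$ as follows: on each good block $B$, write the $Q$-ary index of the true $y|_B$ in the canonical enumeration onto the initial positions of $I_1 \cap B$; on each bad block, flag the block by placing a distinguished symbol at one position of $I_0 \cap B$ and write $y|_B$ verbatim on $I_1 \cap B$ using $\lceil \log_Q |\Sigma_1|\rceil$ symbols per entry; set any remaining positions to $1$. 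Since bad blocks contribute at most $O(\varepsilon' \log_Q |\Sigma_1|)$ to the density of positions used, choosing $\varepsilon'$ small enough makes every assignment fit. The decoder, given $(P^*_x, z, w)$, reconstructs the block partition from $z$ via the marker construction, reads the flags on $I_0$ to distinguish good from bad blocks, regenerates the canonical enumeration on each good block using $z|_B$ and $P^{(k)}_x$ alone, and thereby recovers $y|_B$ on every block.

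The main obstacle is the joint calibration of the parameters $\varepsilon, k, \delta, n, \varepsilon'$ and the decomposition $I = I_0 \sqcup I_1$ so that all density budgets close simultaneously: the slack between $\underline{s}^*(I)$ and $(h(x|z)+2\varepsilon)/\log Q$ must absorb both the $\varepsilon$-error in the type-counting bound and the verbatim-overhead from bad blocks; the density of bad blocks must be small enough for that verbatim encoding to fit; and the uniformity conditions of Proposition \ref{prop:relative-type-theorem-higher-block-version} must be respected. A secondary technical point is checking that every ingredient --- markers, the good/bad classification, the canonical enumeration of compatible $y'$s, and the selection of $I_0, I_1$ --- is measurable and equivariant in $(x, I)$, which follows from the corresponding properties of Proposition \ref{prop:two-symbol-AP-factor} and Lemma \ref{lem:selecting-a-sequence-of-subsets}.
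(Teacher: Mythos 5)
Your outline breaks at the step you label a ``standard averaging argument'': the claim that, for $n$ large, the density of bad length-$n$ blocks is below any preassigned $\varepsilon'$. Regularity of $x$ does give $\tfrac{1}{M}\sum_{j=1}^{M}P^{(k)}_{x|_{B_j}}\to P^{(k)}_{x}$, i.e.\ convergence of the \emph{average} of the block types, but this does not imply that most individual block types are close to $P^{(k)}_{x}$, and in general they are not: the obstruction is non-ergodicity of the empirical measure $\mu_x$. Consider $x=\ldots0^{1}1^{1}0^{2}1^{2}0^{3}1^{3}\ldots$ (alternating runs of slowly growing length). Every word frequency exists, with $s(x,0^{k})=s(x,1^{k})=\tfrac12$ and all other words of frequency $0$, so $x$ is regular with $\mu_x=\tfrac12(\delta_{0^{\infty}}+\delta_{1^{\infty}})$; yet for every fixed $n$ the set of $i$ for which $x|_{[i,i+n)}$ is constant has density $1$, so the density of blocks whose type is within $\delta$ of $P_x$ is $0$ for every $n$ and every $\delta<1$. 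Your scheme then declares essentially every block bad, the verbatim fallback consumes density $\approx\log_{Q}|\Sigma_1|$ rather than $O(\varepsilon')$, and the budget cannot close. This is not a case the theorem can exclude: as the opening of Section~\ref{sec:Krieger} emphasizes, the point of this relative Krieger theorem is precisely to handle regular points generic for non-ergodic measures, and the countable-alphabet corollary needs them.

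This is exactly why the paper's proof is multi-scale rather than single-scale. For each position $i$ there is a finite $\ell_0(i)$ beyond which windows starting at $i$ have good $k$-block statistics, but $\ell_0$ is unbounded and, for any fixed threshold $n$, can exceed $n$ on a set of positions of density arbitrarily close to $1$. The paper therefore introduces rapidly increasing scales $L_r$, at stage $r$ performs a Vitali-type selection (seeded by markers extracted from the aperiodic $z$) of intervals $J_r(i)$ around not-yet-covered positions with $\ell_0(i)\le L_r$, encodes $y$ on each such interval \emph{relative to} the already-encoded smaller-scale components (using bracket symbols to make the nested structure self-delimiting), and finally shows the uncovered residual set has density at most $3\varepsilon$, so it can be encoded verbatim. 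Any repair of your argument must replace the fixed block length $n$ by such position-dependent, unboundedly growing scales, at which point you are reconstructing the paper's construction. The remaining ingredients of your outline --- applying Proposition \ref{prop:relative-type-theorem-higher-block-version} with the coordinates swapped, splitting $I$ into a flag/overhead part and a payload part, and the measurability and equivariance bookkeeping via Proposition \ref{prop:two-symbol-AP-factor} and Lemma \ref{lem:selecting-a-sequence-of-subsets} --- are consistent with what the paper does.
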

The statement probably remains true if we replace the uniform density
$\underline{s}^{*}(I)$ with $\underline{s}(I)$, but the uniform
assumption allows for a simpler proof that is good enough for our
application.

Theorem \ref{thm:relative-Krieger} falls short of being a true relative
generator theorem, since in order to recover $x$ from $z,w$ we must
also know $P_{x}^{*}$. In the probability-preserving category, knowing
$P_{x}^{*}$ is analogous to knowing the ergodic component of $x$,
and the corresponding theorem would be one that gives a partition
that generates for every ergodic component of the measure without
guaranteeing that different ergodic components have distinct images
under the itinerary map. This shortcoming can be overcome by encoding
$P_{x}^{*}$ in $w$ (the information carried by $P_{x}^{*}$ is invariant
under the shift, so it can be coded efficiently using Lemma \ref{lem:coding-shift-invariant-data}).
But this would lengthen an already long proof, and we prefer to postpone
this step to the more general theorem for countable partitions.

For simplicity we show how to prove the theorem using a larger output
alphabet: we introduce two additional symbols, $[$ and $]$, and
produce $w\in\{1,\ldots,Q,[,]\}^{I}$ with the desired properties.
We comment at the end how to make do without the extra symbols. 

In the proof we will build up $w$ gradually, starting with all symbols
``blank''. Formally one could introduce a new symbol with this name
and set $w_{i}=blank$ for $i\in I$. As the construction progresses
we will re-define more and more occurrences of the ``blank'' symbols
to have values from $\{1,\ldots,Q,[,]\}$.

We omit the routine verification that the constructions are equivariant
and measurable.

\subsubsection*{Choosing parameters $\varepsilon,\delta,k$ }

By hypothesis $\underline{s}^{*}(I)>h(x|z)$, so setting
\[
\varepsilon=\frac{1}{10\log_{Q}|\Sigma_{1}|}\left(\underline{s}^{*}(I)-\frac{1}{\log Q}h(x|z)\right)
\]
we have $\varepsilon>0$. 

Choose $\delta$ associated to $\varepsilon$ as in Proposition \ref{prop:relative-type-theorem-higher-block-version}.
We can assume that $\delta<\varepsilon$.

Since
\[
h(x|z)=\lim_{k\rightarrow\infty}\frac{1}{k}\left(H(\xi_{x^{(k)}}|\xi_{z^{(k)}})\right)
\]
we can choose $k$ such that 
\[
\frac{1}{k}H(\xi_{x^{(k)}}|\xi_{z^{(k)}})<h(x|z)+\varepsilon\log Q
\]

\subsubsection*{Choosing $I',I''$}

Relying on the definition of \emph{$\varepsilon$ }and choosing a
suitable small $0<\eta_{1}<\eta_{2}<1$, apply Lemma \ref{lem:selecting-uniform-subset}
to $z,I,\eta_{1},\eta_{2}$. We obtain disjoint subsets $I',I''\subseteq I$
satisfying 
\begin{eqnarray}
\underline{s}^{*}(I') & > & \frac{1}{\log Q}h(x|z)+7\varepsilon\left\lceil \log_{Q}|\Sigma_{1}|\right\rceil \label{eq:I-prime-density}\\
\underline{s}^{*}(I'') & > & 3\varepsilon\left\lceil \log_{Q}|\Sigma_{1}|\right\rceil \label{eq:I-double-prime-density}
\end{eqnarray}
We will use each of these sets to encode a different portion of the
word $y$. The first, $I'$, will be used to encode ``most'' (a
$(1-3\varepsilon)$-fraction) of the symbols of $y$, namely, those
that we succeed in covering by intervals with good empirical statistics
in a sense to be defined below. The second set, $I''$, will encode
the remaining (at most $3\varepsilon$-fraction) symbols of $y$.

\subsubsection*{Intervals with good empirical statistics}

Observe that 
\[
P_{x^{(k)}}=\lim_{\ell\rightarrow\infty}P_{x^{(k)}|_{[1,\ell]}}
\]
in the pointwise sense (as functions on $\Sigma_{1}^{k}\times\Sigma_{2}^{k}$).
Since empirical frequencies are invariant under the shift, for every
$i\in\mathbb{Z}$ the same limit holds with $S^{i}x$ in place of
$x$. It follows that for every $i$ there exists an $\ell_{0}(i)\in\mathbb{N}$
such that 
\[
\left\Vert P_{x^{(k)}|_{[i,i+\ell]}}-P_{x^{(k)}}\right\Vert <\frac{1}{2}\delta\qquad\mbox{for all }\ell\geq\ell_{0}(i)
\]

\subsubsection*{The good scales $L_{n}$, intervals $J_{r}(i)$, and sets of candidate
points $U_{r}$ }

Choose $L_{0}\geq2$ large enough that every interval $J$ of length
at least $L_{0}$ satisfies
\begin{equation}
\frac{1}{|J|}|J\cap I'|>\frac{1}{\log Q}h(x|z)+6\varepsilon\left\lceil \log_{Q}|\Sigma_{1}|\right\rceil \label{eq:L-zero-frequency-condition}
\end{equation}
as can be done by (\ref{eq:I-prime-density}). Define $L_{1},L_{2},\ldots\in\mathbb{N}$
by the recursion
\begin{equation}
L_{r+1}=\left\lceil 4L_{r}^{2}/\varepsilon^{4}\right\rceil \label{eq:growth-of-L}
\end{equation}
These will serve as the lengths of the intervals we deal with from
now on. We abbreviate
\[
J_{r}(i)=[i,i+L_{r}-1]
\]

For a given length $L_{r}$ we are only interested in points $i\in\mathbb{Z}$
for which this length is long enough to ensure good empirical statistics:
set 
\[
U_{r}=\{i\in\mathbb{Z}\,:\,L_{r}\geq\ell_{0}(i)\}
\]
Thus, for $i\in U_{r}$ we have $\left\Vert P_{x^{(k)}|_{[i,i+L_{s}]}}-P_{x^{(k)}}\right\Vert <\delta/2$
for all $s\geq r$. Note that $U_{1}\subseteq U_{2}\subseteq\ldots$
and $\bigcup_{r=1}^{\infty}U_{r}=\mathbb{Z}$.

\subsubsection*{Choosing the good intervals: $V_{r}$, $\mathcal{J}_{r}$, $E_{r}$}

Below we will define, for every $r=1,2,3,\ldots$, subsets 
\[
V_{r}\subseteq U_{r}
\]
of ``good'' points and the associated family of intervals 
\[
\mathcal{J}_{r}=\{J_{r}(i)\}_{i\in V_{r}}
\]
whose union we denote 
\[
E_{r}=\cup\mathcal{J}_{r}=\bigcup_{i\in V_{r}}J_{r}(i)
\]
Similarly let $\mathcal{J}_{<r}=\{J_{s}(i)\,:\,i\in U_{s}\,,\,s<r\}$
and $E_{<r}=\cup\mathcal{J}_{<r}=\bigcup_{s<r}E_{s}$.

The construction will satisfy the following properties (note that
in (4) the even and odd stages are treated differently): 
\begin{enumerate}
\item $V_{r}\subseteq U_{r}\setminus E_{<r}$.
\item For each $r$, the collection of intervals $\mathcal{J}_{r}=\{J_{i}\}_{i\in V_{r}}$
is pairwise disjoint.
\item For each $i\in V_{r}$,
\[
\frac{1}{|J_{r}(i)|}\left|J_{r}(i)\setminus E_{<r}\right|>3\varepsilon
\]

\item For odd $r$, if $i\in U_{r}\setminus(E_{<r}\cup E_{r}\cup E_{r+1})$,
then either
\[
\frac{1}{J_{r}(i)}\left|J_{r}(i)\setminus E_{<r}\right|\leq3\varepsilon
\]
 or
\[
\frac{1}{|J_{r+1}(i)|}\left|J_{r+1}(i)\setminus E_{<r+1}\right|\leq3\varepsilon
\]

\end{enumerate}
For the construction we induct over odd $r=1,3,5,\ldots$ and at step
$r$ define $V_{r}$ and $V_{r+1}$. Fix an odd $r$ and assume we
have defined $V_{s}$ for $s<r$. Set 
\begin{eqnarray*}
U'_{r} & = & U_{r}\setminus E_{<r}
\end{eqnarray*}
Recall that $z$ is the second component of $x$ and is assumed to
be aperiodic. Apply Lemma \ref{prop:two-symbol-AP-factor} to $z$
to obtain an $(L_{r}+L_{r+1})$-marker and let $W_{r}$ denote the
set of $1$s in the marker, so $W_{r}$ is unbounded above and the
gap between consecutive elements in it is at least $2L_{r}$. 

Let $i,i'\in W_{r}$ be consecutive elements of $W_{r}$. We define
$V_{r}\cap[i,i'-L_{r})$ inductively: assuming we have defined $i_{p}$
for $1\leq p<q$, define $i_{q}$ to be the least element of $\left(U'_{r}\cap[i,i'-L_{r})\right)\setminus\bigcup_{p<q}J_{r}(i_{p})$
that satisfies $\frac{1}{|J_{r}(i_{q})|}|J_{r}(i_{q})\setminus E_{<r}|>3\varepsilon$.
Stop when no such element exists. Since we chose $i_{p}$ from the
set $U'_{r}$, (1) is immediate. Also, the intervals $J_{r}(i_{p})$
chosen for a given $i,i'\in W_{r}$ are pairwise disjoint by construction,
and since we only choose elements of $[i,i'-L_{r})$ we have $J_{r}(i_{p})\subseteq[i,i')$,
so the intervals are disjoint from those constructed from other consecutive
pairs $j,j'\in W_{r}$. This verifies (2). Property (3) and the first
alternative in property (4) are immediate from the construction.

Now, to define $V_{r+1}$, for each consecutive $i,i'\in W_{r}$ do
exactly the same in the intervals $[i'-L_{r},i')$, using $r+1$ instead
of $r$. Since this interval has length $L_{r}<L_{r+1}$ we see that
$V_{r+1}$ will contain at most one element, namely the least element
$i\in\left(U'_{r}\cap[i'-L_{r},i')\right)\setminus E_{<r+1}$ such
that $\frac{1}{|J_{r+1}(i)|}|J_{r+1}(i)\setminus E_{<r+1}|\geq3\varepsilon$,
if such an element exists (there can be no more because the interval
$[i'-L_{r},i')$ is shorter than the length of the interval $J_{r+1}(i)$,
so after one iteration of the induction there are no candidates left).
Again (1) is automatic, (3) is like before, and the second alternative
of (4) is clear (using $U_{r}\subseteq U_{r+1}$). As for (2), note
that the gaps between consecutive elements of $W_{r}$ are at least
$L_{r}+L_{r+1}$, so if $i\in W_{r}$ and $j\in[i-L_{r},i)$, then
$J_{r+1}(j)\cap[i'-L_{r},i')=\emptyset$ for every $i'\in W_{r}\setminus\{i\}$.
This easily implies (2).

\subsubsection*{Decomposing $E_{<r}$ into components}

Define a component of $E_{<r}$ to be an interval $J$ that satisfies
\[
J=\cup\{J'\in\mathcal{J}_{<r}\,:\,J'\cap J\neq\emptyset\}
\]
and which is minimal in the sense that no proper subinterval of $J$
satisfies the same condition. Clearly the intersection of components
is a component, so by the minimality property any two components are
either equal or disjoint. We remark that $J$ is just the union of
intervals in the intersection graph of $\mathcal{J}_{<r}$, where
the graph is defined by connecting two intervals in $\mathcal{J}_{<r}$
if they intersect nontrivially.
\begin{lem}
\label{lem:components}Every component $[a,b]\subseteq E_{<r}$ is
of the form $[a,b]=J_{r_{1}}(i_{1})\cup J_{r_{2}}(i_{2})\cup\ldots\cup J_{r_{m}}(i_{m})$,
where $r>r_{1}>r_{2}>\ldots>r_{m}$ and $a=i_{1}<i_{2}<\ldots<i_{m}$.
In particular $m<r$ and $|[a,b]|\leq\sum_{s<r_{1}}L_{s}<(1+\varepsilon/2)L_{r_{1}-1}$. \end{lem}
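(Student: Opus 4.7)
\emph{Proof plan.} The plan is to build the decomposition by a greedy procedure starting from the left endpoint $a$, using properties (1) and (2) of the sets $V_s$ to force the sequence of levels to strictly decrease. The key structural fact is that once $J_{r_k}(i_k)$, with right endpoint $p_k := i_k + L_{r_k}-1$, has been placed, any further interval $J_s(j) \in \mathcal{J}_{<r}$ that extends coverage beyond $p_k$ must have $s < r_k$.

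For the base case, since $[a,b]$ is a maximal subinterval of $E_{<r}$ we have $a-1 \notin E_{<r}$, yet $a \in E_{<r}$, so $a \in J_s(j)$ for some $j \in V_s$ with $s < r$. If $j < a$, then $a-1 \in J_s(j) \subseteq E_{<r}$, contradicting maximality; hence $j = a$. The index $s$ is unique because the $V_t$ are pairwise disjoint: by property (1), if $s_1 < s_2$ and $a \in V_{s_1}$, then $a \in E_{s_1} \subseteq E_{<s_2}$, so $a \notin V_{s_2}$. I set $r_1 := s$ and $i_1 := a$.

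For the inductive step, assume $J_{r_1}(i_1), \ldots, J_{r_k}(i_k)$ have been chosen with the claimed properties and union $[a, p_k]$. If $p_k = b$, stop; otherwise $p_k+1 \in [a,b]$ lies in some $J_s(j) \in \mathcal{J}_{<r}$ contained in $[a,b]$, so $j \in [a, p_k+1]$. In the main case $j \leq p_k$, so $j$ lies in the current union and hence in some $J_{r_\ell}(i_\ell) \subseteq E_{r_\ell}$. Property (1) applied to $j \in V_s$ then forces $r_\ell \geq s$ for every such $\ell$. The possibility $s = r_k$ is ruled out by property (2): $J_{r_k}(j)$ and $J_{r_k}(i_k)$ would be two elements of $V_{r_k}$ whose intervals meet at $j$, forcing $j = i_k$, whence $J_s(j) = J_{r_k}(i_k)$ cannot extend past $p_k$, a contradiction. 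Therefore $s < r_k$, and I set $r_{k+1} := s$, $i_{k+1} := j$.

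Termination and the numerical bounds follow at once: the $r_\ell$ form a strictly decreasing sequence of nonnegative integers less than $r$, giving $m < r$, and the distinct lengths $L_{r_\ell}$ lie in $\{L_0, \ldots, L_{r_1}\}$, so $|[a,b]| \leq \sum_{\ell=1}^m L_{r_\ell} \leq \sum_{s=0}^{r_1} L_s$. The super-geometric growth in (\ref{eq:growth-of-L}) gives $L_{s-1}/L_s \leq \varepsilon^4/4$, which makes $\sum_{s<r_1} L_s$ a negligible fraction of $L_{r_1}$ and bounds the total by $(1+\varepsilon/2)L_{r_1}$ (the statement's $L_{r_1-1}$ appears to be an indexing slip, since the component already contains the length-$L_{r_1}$ interval $J_{r_1}(i_1)$). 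The main obstacle I foresee is the adjacency case $j = p_k + 1$, where $J_s(j)$ abuts rather than overlaps the current union; handling it cleanly will likely require exploiting the $(L_{r_k}+L_{r_k+1})$-marker structure used in defining $V_{r_k}$ to prevent two adjacent same-level intervals from both being selected, or else incorporating such consecutive runs into the first step of the decomposition.
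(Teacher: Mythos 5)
Your overall plan (greedy decomposition from the left endpoint, driven by properties (1) and (2)) is the right one, and you are correct that the bound in the statement has an indexing slip (it should read $\sum_{s\leq r_1}L_s<(1+\varepsilon/2)L_{r_1}$; this is also what the later applications of the lemma use). But the inductive step has a genuine gap. You choose \emph{some} interval $J_s(j)$ containing $p_k+1$ with $j\leq p_k$, note that $j$ lies in some $J_{r_\ell}(i_\ell)$, deduce $s\leq r_\ell$ from property (1), rule out $s=r_k$ by property (2), and conclude $s<r_k$. That last step only works if $j\in J_{r_k}(i_k)$, i.e.\ $j\geq i_k$. If instead $j<i_k$, then $j$ lies only in some earlier $J_{r_\ell}(i_\ell)$ with $\ell<k$, and property (1) gives merely $s\leq r_\ell$, which is compatible with $r_k<s<r_\ell$. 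This configuration is not excluded by (1)--(4): for example one can have $J_{r_1}(i_1)=[a,p_1]$, a large interval $J_s(j)$ with $j$ within $L_s$ of $p_1$ (so it protrudes past $p_1$), and a small interval $J_{r_2}(i_2)$ with $j<i_2\leq p_1$ also protruding past $p_1$ but contained in $J_s(j)$; the membership conditions $j\notin E_{<s}$ and $i_2\notin E_{<r_2}$ are all satisfiable. If your procedure happens to pick $J_{r_2}(i_2)$ at step $2$, then at step $3$ it meets $J_s(j)$ with $s>r_2$ and left endpoint $j<i_2$, and both the strict decrease of levels and the increase of left endpoints fail.

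The repair is exactly the choice the paper makes: at each stage take the \emph{longest} interval of $\mathcal{J}_{<r}$ that meets the current union $[a,p_k]$ and is not contained in it. With that choice, an offending $J_s(j)$ with $s>r_k$ either has $j\leq p_{k-1}$, in which case it already protruded past $p_{k-1}$ and would have been selected at step $k$ instead of the shorter $J_{r_k}(i_k)$ (contradicting maximality), or has $j\in(p_{k-1},p_k]\subseteq J_{r_k}(i_k)\subseteq E_{r_k}\subseteq E_{<s}$, contradicting property (1). Two smaller remarks. First, the ``adjacency case'' $j=p_k+1$ that you flag as the main obstacle is not actually a problem: since the component is a connected component of the overlap graph of $\mathcal{J}_{<r}$, if no interval of the component straddled $p_k$ and $p_k+1$ then its intervals would split into two nonempty classes with no edges between them; so a straddling interval always exists and you may restrict attention to it. Second, your base case argues that $a-1\notin E_{<r}$ ``by maximality,'' but a component need not be a maximal subinterval of $E_{<r}$ (two components can abut); the correct argument is that any interval containing $a$ meets the component and is therefore contained in $[a,b]$, forcing its left endpoint to equal $a$.
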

\begin{proof}
If two intervals from $\mathcal{J}_{<r}$ intersect, then by properties
(1) and (2) they do not have the same length, and the left endpoint
of the shorter one lies inside the longer one but not vice versa.
Thus if neither of the intervals is contained in the other, the shorter
must protrude beyond the right side of the longer one. 

Now fix a component $J$ of $E_{<r}$. Let $J_{r_{1}}(i_{1})\in\mathcal{J}_{<r}$
be the interval (necessarily unique by disjointness of the $V_{i}$s)
that has the same left endpoint as $J$. It must be contained in $J$
because $J$ is a component. If $J_{r_{1}}(i_{1})=J$ we are done,
otherwise let $J_{r_{1}}(i_{2})$ be the longest interval in $\mathcal{J}_{<r}$
that intersects $J_{r_{1}}(i_{i})$ non-trivially, and note that by
the previous paragraph it must be shorter ($r_{2}<r_{1}$), and contained
in $J$ because $J$ is a component. Continuing inductively we exhaust
$J$. The two last conclusions follow immediately from the first and
(\ref{eq:growth-of-L}).\end{proof}
\begin{lem}
\label{lem:bound-on-number-of-components}Every $J\in\mathcal{J}_{r}$
intersects at most $1+L_{r}/L_{1}$ components of $E_{<r}$.\end{lem}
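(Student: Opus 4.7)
The plan is to leverage property (1) of the inductive construction, namely $V_r \subseteq U_r \setminus E_{<r}$, which forces the left endpoint $i$ of $J = J_r(i)$ to lie outside $E_{<r}$. Without this observation one would only get the weaker bound $k \leq 2 + L_r/L_1$ (losing a component at each boundary of $J$); property (1) shaves the loss at the left boundary and yields the sharper estimate.

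First I would record the two structural facts needed. Every component of $E_{<r}$ contains at least one interval $J_s(j)$ with $s < r$, hence has length at least $L_s \geq L_1$. Moreover components are, by definition, maximal subintervals of $E_{<r}$, so any two distinct components are separated by at least one point outside $E_{<r}$. Consequently, if $C_1, \ldots, C_k$ are the components of $E_{<r}$ meeting $J$, listed from left to right with left endpoints $a_1 < a_2 < \cdots < a_k$, then $a_{t+1} \geq a_t + L_1 + 1$ for each $t$ (length $\geq L_1$ of $C_t$ plus a gap of $\geq 1$).

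Next, using $i \notin E_{<r}$, I would argue $a_1 \geq i + 1$: if instead $a_1 \leq i$, then since $C_1$ must reach into $J = [i, i+L_r-1]$ we would have $b_1 \geq i$, giving $i \in [a_1, b_1] = C_1 \subseteq E_{<r}$, contradicting property (1). Iterating the gap bound yields $a_k \geq i + 1 + (k-1)(L_1+1)$, while $C_k \cap J \neq \emptyset$ forces $a_k \leq i + L_r - 1$. Combining, $(k-1)(L_1+1) \leq L_r - 2$, and the routine cross-multiplication $(L_r-2)L_1 \leq L_r(L_1+1)$ (equivalent to $-2L_1 \leq L_r$) gives $k - 1 \leq L_r/L_1$, as required.

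I do not expect any real obstacle here; the argument is a short interval-packing estimate, and the only conceptual point is to remember to invoke property (1) so that the leftmost intersecting component starts strictly to the right of $i$ rather than possibly covering $i$ itself.
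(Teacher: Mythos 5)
Your argument is correct and is essentially the paper's own proof: both are packing estimates using the facts that each component meeting $J$ has length at least $L_{1}$ and that property (1) ($i\notin E_{<r}$) prevents the leftmost such component from reaching past the left endpoint of $J$, yielding $(m-1)L_{1}\leq L_{r}$. (One small remark: with the paper's definition a component need not be separated from the next by a point outside $E_{<r}$, so your spacing bound should read $a_{t+1}\geq a_{t}+L_{1}$ rather than $a_{t}+L_{1}+1$; this does not affect the conclusion.)
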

\begin{proof}
Let $J=J_{r}(i)\in\mathcal{J}_{r}$ and let $J'_{1},\ldots,J'_{m}$
denote the components in $E_{<r}$ that intersect it non-trivially.
They are disjoint, and each has length $\geq L_{1}$ (because it contains
some interval from $\mathcal{J}_{<r}$). Also, all except possibly
the rightmost component are contained in $J$ (the leftmost one must
be contained in $J$ by property (1)). Therefore $|J'_{j}\cap J|\geq L_{1}$
for at least $m-1$ of the intervals. Taken together, this shows that
$L_{r}\geq(m-1)L_{1}$, which is what was claimed.\end{proof}
\begin{lem}
\label{lem:bound-on-component-distribution}If $[a,b]\subseteq E_{<r}$
is a component, then $\left\Vert P_{x^{(k)}|_{[a,b]}}-P_{x^{(k)}}\right\Vert <\delta$. \end{lem}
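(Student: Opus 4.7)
Plan: My approach is to isolate the ``large'' constituent interval in the component decomposition, on which the empirical $k$-block distribution is guaranteed to be close to $P_{x^{(k)}}$, and to show that everything else in $[a,b]$ is a tiny fraction of its total length, thanks to the super-geometric growth rate of the scales $L_r$.

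First I would invoke Lemma \ref{lem:components} to write
\[
[a,b] = J_{r_1}(i_1) \cup J_{r_2}(i_2) \cup \cdots \cup J_{r_m}(i_m), \qquad r > r_1 > r_2 > \cdots > r_m,
\]
and set $R = [a,b] \setminus J_{r_1}(i_1)$. Because $r_2, \ldots, r_m$ are distinct values in $\{1, \ldots, r_1 - 1\}$, the remainder satisfies
\[
|R| \leq \sum_{s=2}^{m} L_{r_s} \leq \sum_{s < r_1} L_s.
\]
The point of the recursion (\ref{eq:growth-of-L}), which forces $L_{s+1}/L_s \geq 4/\varepsilon^4$, is precisely to make this geometric-type tail dominated by its last term: $\sum_{s < r_1} L_s \leq 2 L_{r_1 - 1}$, and then $L_{r_1 - 1} \leq (\varepsilon^4/4) L_{r_1}$ by one more application of (\ref{eq:growth-of-L}). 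So $|R|/|[a,b]| \leq |R|/L_{r_1} \leq \varepsilon^4/2$.

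Next, because $i_1 \in V_{r_1} \subseteq U_{r_1}$, the scale $L_{r_1}$ is at least $\ell_0(i_1)$, and the defining property of $\ell_0(i_1)$ gives $\|P_{x^{(k)}|_{J_{r_1}(i_1)}} - P_{x^{(k)}}\| < \delta/2$. I would then write
\[
P_{x^{(k)}|_{[a,b]}} = \frac{L_{r_1}}{|[a,b]|} P_{x^{(k)}|_{J_{r_1}(i_1)}} + \frac{|R|}{|[a,b]|} P_{x^{(k)}|_R}
\]
as a convex combination, and use the triangle inequality together with the trivial bound $\|P_{x^{(k)}|_R} - P_{x^{(k)}}\| \leq 2$ to get
\[
\|P_{x^{(k)}|_{[a,b]}} - P_{x^{(k)}}\| \leq \frac{\delta}{2} + 2 \cdot \frac{\varepsilon^4}{2} = \frac{\delta}{2} + \varepsilon^4.
\]

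The only delicate point is the matching of constants: we need $\varepsilon^4 < \delta/2$ for the final bound to be strictly less than $\delta$. This is an additional smallness requirement on the initial $\varepsilon$ beyond $\delta < \varepsilon$, and it is compatible with everything done up to this point since shrinking $\varepsilon$ only tightens the earlier conditions. In other words, one simply chooses $\varepsilon$ at the outset small enough that $2\varepsilon^4 < \delta < \varepsilon$, which is possible for all sufficiently small $\varepsilon$; this is the only real obstacle, and it is purely a bookkeeping issue.
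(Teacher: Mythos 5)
Your proof is correct and follows essentially the same route as the paper's: isolate the longest constituent interval $J_{r_1}(i_1)$ (whose statistics are within $\delta/2$ of $P_{x^{(k)}}$ because $i_1\in U_{r_1}$), bound the remainder via Lemma \ref{lem:components} and the growth recursion (\ref{eq:growth-of-L}), and conclude by writing $P_{x^{(k)}|_{[a,b]}}$ as a convex combination. On the constant-matching point you raise (which the paper indeed glosses over), note that $\delta$ is determined by $\varepsilon$ via Proposition \ref{prop:relative-type-theorem-higher-block-version}, so you cannot tune them independently; the cleaner fix is to observe that $L_{r_1-1}/L_{r_1}\leq\varepsilon^{4}/(4L_{0})$ and to impose, in addition to (\ref{eq:L-zero-frequency-condition}), that $L_{0}$ be large enough to make this ratio smaller than $\delta/4$.
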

\begin{proof}
Write $[a,b]$ as a union as in Lemma \ref{lem:components}. Let $[a,c]=J_{r_{1}}(i_{1})\subseteq[a,b]$.
We know that $\left\Vert P_{x^{(k)}|_{[a,c]}}-P_{x^{(k)}}\right\Vert <\delta/2$,
and 
\[
P_{x^{(k)}|_{[a,b]}}=\frac{c-a}{b-a}P_{x^{(k)}|_{[a,c]}}+\frac{b-c-1}{b-a}P_{x^{(k)}|_{[c+1,b]}}
\]
By Lemma \ref{lem:components}, $|c-a|>(1-\delta/2)|b-a|$, and the
conclusion follows.\end{proof}
\begin{lem}
\label{lem:density-ofJ-intersect-E-less-r}For every $J=J_{r}(i)\in\mathcal{J}_{r}$,
\[
\frac{1}{|J\setminus E_{<r}|}|(J\setminus E_{<r})\cap I_{n}|>\frac{1}{\log Q}h(x|z)+5\varepsilon\left\lceil \log_{Q}|\Sigma_{1}|\right\rceil 
\]
\end{lem}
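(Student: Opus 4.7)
My strategy is to decompose $J\setminus E_{<r}$ into its maximal sub-intervals (the ``gaps'' separating successive components of $E_{<r}$ that meet $J$), separate these into short gaps of length $<L_0$ and long gaps of length $\geq L_0$, and argue that the short gaps contribute a negligible fraction of $|J\setminus E_{<r}|$ while the long ones inherit the lower density bound for $I'$ from the defining property \eqref{eq:L-zero-frequency-condition} of $L_0$. (I read the ``$I_n$'' in the statement as a typo for $I'$; no other indexed $I$ has been defined.)

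The case $r=1$ is trivial, since $E_{<1}=\emptyset$ and so $J\setminus E_{<r}=J$ is already an interval of length $L_1\geq L_0$ to which \eqref{eq:L-zero-frequency-condition} applies directly, giving the conclusion with $6\varepsilon c$ in place of $5\varepsilon c$. So assume $r\geq 2$. By Lemma \ref{lem:bound-on-number-of-components}, the number of components of $E_{<r}$ meeting $J$ is at most $1+L_r/L_1$, and hence the number of gaps of $J\setminus E_{<r}$ is at most $2+L_r/L_1$. Since each short gap has length $<L_0$,
\[
|G^{\textup{short}}|\;\leq\;(2+L_r/L_1)\,L_0\;=\;2L_0\,+\,L_0\cdot L_r/L_1.
\]
Using the recursion \eqref{eq:growth-of-L}, which gives $L_1\geq 4L_0^2/\varepsilon^4$ and, iterating once, $L_r\geq L_2\geq 64L_0^4/\varepsilon^{12}$ for $r\geq 2$, a short computation shows both terms are at most $\varepsilon^4 L_r$, so $|G^{\textup{short}}|\leq 2\varepsilon^4 L_r$. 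Combined with property~(3), which asserts $|J\setminus E_{<r}|>3\varepsilon L_r$, this yields $|G^{\textup{long}}|\geq(1-\varepsilon^3)\,|J\setminus E_{<r}|$.

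For the density estimate, each long gap $G$ is an interval of length $\geq L_0$, so \eqref{eq:L-zero-frequency-condition} gives $|G\cap I'|/|G|>h(x|z)/\log Q+6\varepsilon c$ with $c:=\lceil\log_Q|\Sigma_1|\rceil$. Summing over long gaps and using the previous paragraph,
\[
\frac{|(J\setminus E_{<r})\cap I'|}{|J\setminus E_{<r}|}\;\geq\;\Bigl(\tfrac{h(x|z)}{\log Q}+6\varepsilon c\Bigr)\cdot\frac{|G^{\textup{long}}|}{|J\setminus E_{<r}|}\;\geq\;\Bigl(\tfrac{h(x|z)}{\log Q}+6\varepsilon c\Bigr)(1-\varepsilon^3).
\]
Since $h(x|z)\leq\log|\Sigma_1|$ gives $h(x|z)/\log Q\leq c$, the inequality $(h(x|z)/\log Q+6\varepsilon c)(1-\varepsilon^3)>h(x|z)/\log Q+5\varepsilon c$ reduces to $\varepsilon c>(h(x|z)/\log Q+6\varepsilon c)\varepsilon^3$, which holds once $\varepsilon$ is sufficiently small; this can be arranged at the outset of the section.

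The only real obstacle is verifying $|G^{\textup{short}}|=O(\varepsilon^4)L_r$. The linear term $L_0\cdot L_r/L_1$ is controlled immediately by the definition $L_1\geq 4L_0^2/\varepsilon^4$, but the additive constant $2L_0$ must also be absorbed into $\varepsilon^4 L_r$; this is where the iterated squaring in the recursion \eqref{eq:growth-of-L}---forcing $L_r\gg L_0/\varepsilon^4$ already at $r=2$---is essential.
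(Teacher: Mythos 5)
Your proof is correct and follows essentially the same route as the paper's: decompose $J\setminus E_{<r}$ into the maximal gaps between components, use Lemma \ref{lem:bound-on-number-of-components} and the growth recursion \eqref{eq:growth-of-L} to show the short gaps occupy a negligible fraction of $|J\setminus E_{<r}|$ (the paper takes the cutoff $\tfrac12\sqrt{L_1}$ where you take $L_0$, an immaterial difference), and apply \eqref{eq:L-zero-frequency-condition} to the long gaps before trading the multiplicative loss for one $\varepsilon\lceil\log_Q|\Sigma_1|\rceil$. You also correctly read $I_n$ as a typo for $I'$, and your explicit handling of the final $6\varepsilon\to5\varepsilon$ step is if anything more careful than the paper's.
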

\begin{proof}
Let $J'_{1},\ldots,J'_{m}$ be an enumeration of the components of
$E_{<r}$ that intersect $J=J_{r}(i)$ non-trivially, and let $J''_{1},\ldots,J''_{m'}$
denote those maximal intervals in $J\setminus\bigcup_{j=1}^{m}J'_{j}$
whose length is $<\frac{1}{2}\sqrt{L_{1}}$. Then $m'\leq m+1$ and
by the previous lemma $m<1+L_{r}/L_{1}$, so the total length of the
$J''_{j}$s is
\[
\sum_{j=1}^{m'}|J''_{j}|\leq\frac{\sqrt{L_{1}}}{2}\cdot m'<\frac{\sqrt{L_{1}}}{2}(\frac{L_{r}}{L_{1}}+2)<\frac{L_{r}}{\sqrt{L_{1}}}
\]
By property (3), $|J_{r}(i)\setminus E_{<r}|>3\varepsilon L_{r}$,
and $L_{r}/\sqrt{L_{1}}<\varepsilon^{2}L_{r}$ by (\ref{eq:growth-of-L}),
so 
\begin{eqnarray*}
|J_{r}(i)\setminus(\bigcup_{j=1}^{m}J'_{j}\cup\bigcup_{j=1}^{m'}J''_{j})| & = & |J_{r}(i)\setminus E_{<r}|-|\bigcup_{j=1}^{m'}J''_{j}|\\
 & \geq & |J_{r}(i)\setminus E_{<r}|-\frac{L_{r}}{\sqrt{L_{1}}}\\
 & > & (1-\varepsilon)|J_{r}(i)\setminus E_{<r}|
\end{eqnarray*}
Each maximal interval in $J_{r}(i)\setminus(\bigcup_{j=1}^{m}J'_{j}\cup\bigcup_{j=1}^{m'}J''_{j})$
has length at least $\frac{1}{2}\sqrt{L_{1}}>L_{0}$. Thus by (\ref{eq:L-zero-frequency-condition})
and the above, 
\begin{eqnarray*}
|(J_{r}(i)\setminus E_{<r})\cap I'| & > & \left(\frac{1}{\log Q}h(x|z)+6\varepsilon\left\lceil \log_{Q}|\Sigma_{1}|\right\rceil \right)(1-\varepsilon)|J_{r}(i)\setminus E_{<r}|\\
 & > & \left(\frac{1}{\log Q}h(x|z)+5\varepsilon\left\lceil \log_{Q}|\Sigma_{1}|\right\rceil \right)\cdot|J_{r}(i)\setminus E_{<r}|
\end{eqnarray*}
as claimed.
\end{proof}

\subsubsection*{Encoding $y|_{J}$ for $J\in\mathcal{J}_{r}$}

We now define $w|_{I'\cap E_{r}}$ by induction on $r$. Upon entering
stage $r$, we will have already defined all symbols in $I'\cap E_{<r}$,
and define all remaining symbols in $E_{r}$$\cap I'$. Our objective
is that the information recorded at step $r$, together with $y|_{E_{<r}}$,
$z$ and $P_{x}^{*}$, will suffice to recover $E_{r}$ and $y|_{E_{r}}$,
no matter what additional information is written later in $w$. If
this is accomplished then $w|_{I'}$, $z$ and $P_{x}^{*}$ uniquely
determine $y|_{E_{<\infty}}$.

The actual encoding of information is done as follows. For each component
$J$ of $E_{<r}$, we write symbols in the portion of $I'\setminus E_{<r}$
that falls within $J$. To identify this set we mark its beginning
and end with brackets, and denote the remainder by $J'$. We want
to define the symbols in $J'$ so as to determine $y|_{J}$. We do
this by first enumerating all possibilities for $J$ and $y|_{J}$
that are consistent with the region $J'$, and, if the actual word
$y|_{J}$ is $N$-th in this list, we record this index $N$ in $J'$.
To be able to carry this out, the number of possible values of $N$
must be less than the $Q^{|J'|}$. The estimates below show that this
is indeed the case.

Let $r$ be given and fix $J\in\mathcal{J}_{r}$. First, let $i_{min}=\min((J\setminus E_{<r})\cap I')$
and $i_{max}=\max((J\setminus E_{<r})\cap I')$ and set $w_{i_{min}}=[$
and $w_{j_{max}}=]$. Note that by Lemma \ref{lem:density-ofJ-intersect-E-less-r}
the sets in questions contain more than two elements, and the intervals
in $\mathcal{J}_{r}$ are pairwise disjoint, so this is well defined. 

Next, we want to write symbols $1,\ldots,Q$ to all undefined locations
in $J'=[i_{min}+1,i_{max}-1]\setminus E_{<r}$, in such a way that
$z,P_{x}$ and $w|_{[i_{min},i_{max}]}$ determine $J$ and $x|_{J}$.

We estimate the number of choices for $J$. Using (\ref{eq:L-zero-frequency-condition})
and property (2) of $V_{r}$, 
\[
3\varepsilon|L_{r}|=3\varepsilon|J|\leq i_{max}-i_{min}\leq L_{r}
\]
It follows from (\ref{eq:growth-of-L}) that $i_{min},i_{max}$ determine
$r$ . In order to determine $J$, it thus suffices to specify its
left endpoint, whose distance from $i_{min}$ is at most $L_{r}$.
Thus given $i_{min},i_{max}$ there are at most $L_{r}$ possibilities
for $J$ (this is a slight over-estimate but we can afford to make
it).

Next, we estimate the number of choices for $y|_{J\setminus E_{<r}}$.
Let $J'_{1},\ldots,J'_{m}$ denote the components of $E_{<r}$ that
intersect $J$ non-trivially. Let
\[
F=\cup\{J'_{i}\,:\,J'_{i}\subseteq J\}
\]
The union consists of all but at most one of the intervals $J'_{i}$,
the possible exception occurring at the right end of $J$. By Lemma
\ref{lem:bound-on-component-distribution}, each $J'_{j}\subseteq J$
satisfies
\[
\left\Vert P_{x^{(k)}|_{J'_{j}}}-P_{x^{(k)}}\right\Vert <\delta
\]
and by definition of $U_{r}$ and $J_{r}(i)$,
\[
\left\Vert P_{x^{(k)}|_{J}}-P_{x^{(k)}}\right\Vert <\delta
\]
Also, by Lemma \ref{lem:density-ofJ-intersect-E-less-r} the complement
of $F$ in $J$ is at least an $\varepsilon$-fraction of $J$. Thus
by Proposition \ref{prop:relative-type-theorem-higher-block-version},
\begin{eqnarray*}
\#\mbox{ possibilities for }y|_{J\setminus E_{<r}} & \leq & \#\mbox{ possibilities for }y|_{J\setminus F}\\
 & \leq & 2^{|J\setminus F|(h(x|z)+\varepsilon)}
\end{eqnarray*}
Since $J\setminus E_{<r}$ and $J\setminus F$ differ by at most two
intervals from $\mathcal{J}_{<r}$, whose combined length is $\leq2L_{r-1}<\frac{1}{2}\varepsilon^{2}L_{r}$,
and since by (3) we have $|J\setminus E_{<r}|>3\varepsilon|J|$, we
have 
\[
|J\setminus F|\leq|J\setminus E_{<r}|+\frac{1}{2}\varepsilon^{2}L_{r}<(1+\varepsilon)|J\setminus E_{<r}|
\]
so, using the trivial bound $h(x|z)\leq\log|\Sigma_{1}|$,
\begin{eqnarray*}
\#\mbox{ possibilities for }y|_{J\setminus E_{<r}} & < & 2^{|J\setminus E_{<r}|\cdot\left(h(x|z)+3\varepsilon\log|\Sigma_{1}|\right)}
\end{eqnarray*}

Combining the estimates above, the number of possibilities for the
pair $(J,y|_{E_{<r}})$ satisfies
\begin{eqnarray*}
\#\mbox{possibilities for }(J,y|_{J\setminus E_{<r}}) & < & L_{r}\cdot2^{|J\setminus E_{<r}|\cdot\left(h(x|z)+3\varepsilon\log|\Sigma_{1}|\right)}
\end{eqnarray*}
The number of symbols we have available to write in is $|(J\setminus E_{<r})\cap I'|-2$
(since the symbols $i_{min},i_{max}$ were used for the brackets),
and by Lemma \ref{lem:density-ofJ-intersect-E-less-r} we know that
\begin{eqnarray*}
|(J\setminus E_{<r})\cap I'| & \geq & \left(\frac{1}{\log Q}h(x|z)+5\varepsilon\log_{Q}|\Sigma_{1}|\right)\cdot|J\setminus E_{<r}|
\end{eqnarray*}
so, since we are using the alphabet $\{1,\ldots,Q\}$, 
\begin{eqnarray*}
\#\mbox{ different sequences we can produce} & \geq & Q^{\frac{1}{\log Q}h(x|z)+5\varepsilon\left\lceil \log_{Q}|\Sigma_{1}|\right\rceil -2}\\
 & = & 2^{|J\setminus E_{<r}|\cdot\left(h(x|z)+5\varepsilon\log Q\log_{Q}|\Sigma_{1}|\right)}
\end{eqnarray*}
Comparing these two expressions and noting that $|J\setminus E_{<r}|>3\varepsilon L_{r}$
and $\log L_{r}/3\varepsilon L_{r}<\varepsilon$, we find that there
are enough undefined symbols in $J\setminus E_{<r}$ to uniquely encode
$J$ and $y|_{J\setminus E_{<r}}$.

\subsubsection*{Decoding $w|_{I'}$}

For each $r$ and $J\in\mathcal{J}_{r}$ the symbols $[,]$ were only
to surround an interval $[j,j']\subseteq J$ which was later completely
filled in with other symbols from $1,\ldots,Q$. It follows that the
pattern of brackets in $w|_{I'}$ forms a legal bracket expression,
i.e. each bracket has a unique matching one. Furthermore, as we noted
during the construction, $j'-j$ determines the stage $r$ at which
they were written. Thus $[j,j']\cap E_{<r}$ can be recognized as
the union of interiors of bracketed intervals contained in $[j,j']$,
and the data in the pattern $([j,j']\setminus E_{<r})\cap I'$ together
with $z$ and $P_{x}^{*}$ determines the (unique) interval $J\in\mathcal{J}_{r}$
such that $j=\min(J\setminus E_{<r})\cap I'$ and $j'=\max(J\setminus E_{<r})\cap I'$,
and also determines $y|_{J\setminus E_{<r}}$. In this way $w|_{I'}$
determines $E_{<\infty}$ and $y|_{E_{<\infty}}$.

\subsubsection*{Encoding $y|_{\mathbb{Z}\setminus E_{<\infty}}$}

It remains to encode $y|_{\mathbb{Z}\setminus E_{<\infty}}$ in $w|_{I''}$.
If $E_{<\infty}=\mathbb{Z}$ there is nothing to do and we set $w|_{I''}\equiv0$.
Otherwise let $i\in\mathbb{Z}\setminus E_{<\infty}$. Then it belongs
to $U_{r}$ for all large enough $r$ and hence, for all large enough
$r$, we have $i\in U_{r}\setminus E_{<r+2}$. By (3) either 
\[
\frac{1}{J_{r}(i)}\left|J_{r}(i)\setminus E_{<r}\right|\leq3\varepsilon\qquad\mbox{or}\qquad\frac{1}{|J_{r+1}(i)|}\left|J_{r+1}(i)\setminus E_{<r+1}\right|\leq3\varepsilon
\]
It follows that 
\[
\underline{s}(\mathbb{Z}\setminus E_{<\infty})\leq3\varepsilon
\]
Since $\underline{s}(I'')>3\varepsilon\left\lceil \log_{Q}|\Sigma_{1}|\right\rceil $
(equation (\ref{eq:I-double-prime-density})), we can apply Lemma
\ref{lem:selecting-a-sequence-of-subsets} to $z$ and $I''$ to obtain
disjoint $I''_{i}\subseteq I''$, $i=1,\ldots,\left\lceil \log_{Q}|\Sigma_{1}|\right\rceil $,
with $\underline{s}(I''_{i})>3\varepsilon$, and apply Lemma \ref{lem:equivarian-injections}
to $z$ and each $I''_{i}$ to obtain an injection $f_{i}:\mathbb{Z}\setminus E_{<\infty}\rightarrow I''_{i}$.
For each $i\in\mathbb{Z}\setminus E_{<\infty}$ represent $y_{i}\in\Sigma_{1}$
as a string $a_{1}\ldots a_{\left\lceil \log_{Q}|\Sigma|_{1}\right\rceil }$
and set $w_{f_{j}(i)}=a_{j}$.

\subsubsection*{Decoding $w|_{I''}$}

Since $E_{<r}$ can be recovered from $w$, we can recover $\mathbb{Z}\setminus E_{<r}$
and hence the sets $I''_{i}$ and the injection $f_{i}$. Then for
$i\in\mathbb{Z}\setminus E_{<\infty}$ we recover $y_{i}$ by reading
off the sequence $w_{f_{1}(i)},w_{f_{2}(i)},\ldots$.

\subsubsection*{Reducing the alphabet from size $Q+2$ to $Q$}

To make do with $Q$ symbols of output instead of $Q+2$, Choose long
enough words $a_{[},a_{]}\in\{1,\ldots,Q\}^{*}$ so that the SFT in
$\{1,\ldots,Q\}^{\mathbb{Z}}$ that omits them has entropy greater
than $h(x|z)$, and the words cannot overlap themselves or each other.
Then we use them in place of the symbols $[,]$ and choose all other
sequences in the encoding so that they omit $a_{[},a_{]}$, i.e.,
so that they are admissible for the SFT defined by omitting these
two symbols. We can arrange this SFT to be mixing, and all encoding
can be seen to occur in long blocks, which makes this possible. The
fact that enough legal sequences exist for the encoding can be ensured,
because by choosing $a_{[},a_{]}$ long enough, we can ensure that
the topological entropy of the SFT omitting them is still larger than
the empirical entropy of $y$. We omit the standard details (for the
application to the main theorem of this paper, the version with $Q+2$
symbols suffices).

\subsection{\label{sub:Empirical-entropy}\label{sub:Constructing-the-generator-Krieger-case}Constructing
the generator}

Fix a countable alphabet $\Sigma=\{\sigma_{1},\sigma_{2},\ldots\}$.
Let $*$ denote a symbol not in $\Sigma$, let $\Sigma_{n}=\{\sigma_{1},\ldots,\sigma_{n},*\}$,
let $\pi_{n}:\Sigma\rightarrow\Sigma_{n}$ denote the map that collapses
the symbols $\sigma_{n+1},\sigma_{n+2},\ldots$ to $*$, and extend
$\pi_{n}$ pointwise to sequences. Thus $\pi_{n}(x)\in\Sigma_{n}^{\mathbb{Z}}$
is the sequence 
\[
\pi_{n}(x)_{i}=\left\{ \begin{array}{cc}
x_{i} & \mbox{if }x_{i}\in\{\sigma_{1},\ldots,\sigma_{n}\}\\
* & \mbox{otherwise}
\end{array}\right.
\]
It is clear that if $x\in\Sigma^{\mathbb{Z}}$ is regular then so
is $\pi_{n}(x)$ for every $n$, so using the notation of the previous
section we can define 
\[
h_{n}(x)=h(\pi_{n}(x))
\]
Since $\pi_{n}(x)$ is obtained from $\pi_{n+1}(x)$ by merging occurrences
of $\sigma_{n+1}$ and $*$ into the symbol $*$, it is easy to see
that $h(\pi_{n+1}(x))\geq h(\pi_{n}(x))$ (either directly, or using
the fact that $\pi_{n}$ is a factor map from $(\Sigma_{n+1},\mu_{\pi_{n+1}(x)},S)$
to $(\Sigma_{n},\mu_{\pi_{n}(x)},S)$, and that Kolmogorov-Sinai entropy
is non-increasing under factors). Thus we can set 
\[
h(x)=\lim_{n}h_{n}(x)=\sup_{n}h_{n}(x)
\]
For the same reason that $h_{n}(x)$ is non-decreasing, this definition
of $h(x)$ is independent of the ordering of $\Sigma$: If we choose
a different ordering $\Sigma=\{\sigma'_{1},\sigma'_{2},\ldots\}$
and define corresponding $\Sigma'_{n}$ and $\pi'_{n}$ and $h'_{n}(x)$,
then for every $n$ we have $\pi_{n}(x)=\pi_{n}(\pi'_{n'}(x))$ for
large enough $n'$, so $h_{n}(x)\leq h'_{n'}(x)$, hence if $h'(x)=\sup h'_{n}(x)$
then $h(x)\leq h'(x)$, and reversing the argument we see the two
are the same. Thus $h(x)$ does not depend on the particular ordering
we chose for $\Sigma$ (although $h(\pi_{n}(x))$ does).

Finally, observe that $h(x)\leq\widetilde{H}(x)$. Indeed, write $P_{x}$
for the probability vector on $\Sigma\cup\{*\}$ that gives mass $s_{a}(x)$
to $a\in\Sigma$ and $1-\sum_{a\in\Sigma}s_{a}(x)$ to $*$. Note
that $P_{\pi_{n}(x)}=\pi_{n}P_{x}$, which implies 
\[
h_{n}(x)=\inf_{k}H(\xi_{\pi_{n}(x)^{(k)}})\leq H(\xi_{\pi_{n}(x)})=H(P_{\pi_{n}(x)})\leq H(P_{x})=\widetilde{H}(x)
\]
Therefore $h(x)=\lim_{n\rightarrow\infty}h_{n}(x)\leq\widetilde{H}(x)$. 
\begin{thm}
\label{thm:generator-for-low-entropy-sequences}Let $Q\in\mathbb{N}$
and let $Y_{Q}\subseteq\Sigma_{AP}^{\mathbb{Z}}$ denote the set of
aperiodic and regular sequences $y$ satisfying $h(y)<\log Q$. Then
$Y_{Q}$ has a $Q$-set generator.
\end{thm}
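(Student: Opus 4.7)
The strategy is to encode $y$ iteratively through its finite-alphabet projections $\pi_n(y)$, applying Theorem~\ref{thm:relative-Krieger} at each stage with the previously-recovered projection $\pi_{n-1}(y)$ playing the role of conditioning factor. The decisive arithmetic fact is the chain rule for empirical entropy, $\sum_{n\ge 1}\bigl(h_n(y)-h_{n-1}(y)\bigr)=h(y)<\log Q$, which ensures that the total uniform-density budget required for the iteration is strictly less than $1$.

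Fix $y\in Y_Q$. Apply Proposition~\ref{prop:two-symbol-AP-factor} to $y$ to obtain an auxiliary aperiodic factor $\tau=\tau(y)\in\{0,1\}_{AP}^{\mathbb{Z}}$, chosen to be an $N$-marker for some large measurable $N=N(y)$. Partition $\mathbb{Z}$ measurably and equivariantly into disjoint subsets $I_0,I_1,I_2,\ldots$ using the uniform-density version of Lemma~\ref{lem:selecting-a-sequence-of-subsets}, arranging that $\underline{s}^*(I_0)$ is a small positive quantity and $\underline{s}^*(I_n)$ slightly exceeds $(h_n(y)-h_{n-1}(y))/\log Q$ for $n\ge 1$; the sum remains below $1$ by the chain-rule identity plus a cushion chosen through $N$.

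Into $w|_{I_0}$ I would write two items: the shift-invariant empirical frequency function $P_y^*$ together with all joint frequencies needed in later stages, recorded using Lemma~\ref{lem:coding-shift-invariant-data}; and a compact description of $\tau$ via its $\{N,N+1\}$-valued gap sequence, written at the $1$-positions of $\tau$ via an equivariant injection (Lemma~\ref{lem:equivariant-injections-uniform-versions}). Both items have sub-linear rate and fit in $I_0$ once $N$ is large. For each $n\ge 1$ I would then apply Theorem~\ref{thm:relative-Krieger} to the triple $x=(\pi_n(y),\pi_{n-1}(y),\tau)$ with aperiodic conditioning factor $z=(\pi_{n-1}(y),\tau)$ and target set $I_n$, producing $w|_{I_n}\in\{1,\ldots,Q\}^{I_n}$. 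A decoder reads $I_0$ first to obtain $\tau$ and the frequency data, then inductively reads each $I_n$ to recover $\pi_n(y)$ from $\pi_{n-1}(y)$, $\tau$, and the stored $P^*$; passing to the limit recovers $y$.

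The main obstacle is the use of a non-local aperiodic reference $\tau$, which is needed because some aperiodic $y$ can have all finite-alphabet projections $\pi_n(y)$ periodic (ruler-type sequences), whereas Theorem~\ref{thm:relative-Krieger} requires an aperiodic conditioning factor. Coupling $\tau$ into $z$ repairs this but creates two sub-problems: the bootstrap recording of $\tau$ (resolved by direct writing of its low-rate gap sequence rather than a self-referential invocation of Theorem~\ref{thm:relative-Krieger}), and the need for the joint empirical frequencies of $(\pi_n(y),\pi_{n-1}(y),\tau)$ to exist and be recoverable — countably many shift-invariant real-valued functions of $y$, which can still be packaged through Lemma~\ref{lem:coding-shift-invariant-data}. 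All intermediate selections are measurable and equivariant by the corresponding properties of the preliminary constructions in Section~\ref{sec:Preliminary-constructions}.
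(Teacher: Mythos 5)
Your overall architecture matches the paper's: telescope through the finite projections $\pi_n(y)$, apply Theorem \ref{thm:relative-Krieger} at stage $n$ with conditioning factor $(\pi_{n-1}(y),\tau)$, couple in an auxiliary aperiodic factor $\tau$ precisely because the $\pi_n(y)$ may all be periodic, store the shift-invariant frequency data via Lemma \ref{lem:coding-shift-invariant-data}, and close the density budget with the telescoping identity $\sum_{n\ge 1}(h_n(y)-h_{n-1}(y))=h(y)<\log Q$. All of this is the right skeleton, and your diagnosis of why the raw projections cannot serve as conditioning factors is exactly the issue the paper addresses.

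The genuine gap is the bootstrap for $\tau$, which you flag but do not actually resolve. Every equivariant selection in your scheme --- the choice of $I_0,I_1,\ldots$ via Lemma \ref{lem:selecting-a-sequence-of-subsets}, and the injection used to place the gap sequence of $\tau$ --- takes $\tau$ itself as its aperiodicity parameter. The decoder, who sees only $w\in\{1,\ldots,Q\}^{\mathbb{Z}}$, therefore cannot locate $I_0$, nor the image of the injection carrying the gap data, without already knowing $\tau$; but $\tau$ is exactly what that data is supposed to deliver. Writing the gap sequence ``at the $1$-positions of $\tau$'' does not help, since those positions are equally invisible to the decoder. The paper breaks this circularity by a different device: it writes $w'$ (its version of $\tau$) directly into the output stream at \emph{self-delimiting} positions, placing the block $1^{M}2$ at each marker position and a sparse grid of $2$'s in the gaps, arranged so that the remaining blank sites occur only in runs of length $<M$ bracketed by $2$'s, whence no later filling of the blanks can create a spurious occurrence of $1^{M}2$. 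Thus $w'$ is recoverable from $w$ alone by pattern-matching, and only then are $U$ (the blank sites), $I_0$, the empirical frequencies, the $\rho_n$, and the $I_n$ reconstructed, in that order. Your proof needs this step, or some equivalent absolutely recognizable encoding of $\tau$ that does not presuppose knowledge of $\tau$, before the inductive stages can be decoded; the cost in density is $O(1/M)$ and fits in the cushion you already reserved.
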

Our aim is to construct an injective factor map $\tau:Y_{Q}\rightarrow\{1,\ldots,Q\}^{\mathbb{Z}}$.
As in the proof of Theorem \ref{thm:relative-Krieger}, we begin with
$w=\tau(y)$ ``blank'', and define it inductively.

Fix $x\in Y_{Q}$. We begin with a preliminary step, which we call
step $0$, in which we choose a rather sparse subset $I_{0}\subseteq\mathbb{Z}$
and record all of the frequencies $(s_{a}(x))_{a\in\Sigma^{*}}$ on
$w|_{I_{0}}$ (in particular $w|_{I_{0}}$ determines the entropies
$h(\pi_{n}(x))$). We also determine a sequence of disjoint sets $I_{1},I_{2},\ldots\subseteq\mathbb{Z}\setminus I_{0}$
with having uniform densities which we will specify later. We do this
in a manner that $I_{0},I_{1},\ldots$ can be recovered from $w$
irrespective of what is written later.

After the preliminary step is complete, we apply the relative generator
theorem (Theorem \ref{thm:relative-Krieger}) inductively to define
$w|_{I_{n}}$ in such a way that given $\pi_{n-1}(x)$ and $P_{\pi_{n-1}(x)}$,
we can recover $\pi_{n}(x)$ (in fact a minor modification of this
strategy is necessary, see below).

\subsubsection*{Definition of $\rho_{n}$}

FOr $n=1,2,3,\ldots$ choose numbers $\rho_{n}$ in the range 
\[
\frac{h_{n}(x)-h_{n-1}(x)}{\log Q}<\rho_{n}<1
\]
(here $h_{0}(x)=0$) and another rational number $0<\rho_{0}<1$,
in such a way that $\sum_{n=0}^{\infty}\rho_{n}<1$. This is possible
since by hypothesis, $\sum_{n=1}^{\infty}(h_{n}(x)-h_{n-1}(x))=h(x)<\log Q$.

\subsubsection*{Encoding an aperiodic sequence in $w$}

The first thing we do will be to encode an aperiodic sequence in $w$.
This sequence will be used in both the encoding and decoding of $w$,
so we must ensure that it can be recovered no matter what additional
data is later written to $w$. 

Choose $M$ large enough that $1/M<\rho_{0}/4$ and let $w'\in\{0,1\}^{\mathbb{Z}}$
denote the sequence derived from $y$ using Proposition \ref{prop:two-symbol-AP-factor}
and parameter $M^{2}$, so $w'$ is aperiodic and the gaps between
consecutive $1$s in $w'$ is at least $M^{2}$.

For every $i$ such that $w'_{i}=1$, set $w_{i}=w_{i+1}=\ldots,w_{i+M-1}=1$. 

Next, let $i<j$ be the positions of a pair of consecutive occurrences
of $1$ in $w'$, let $k$ be the largest index such that $i+kM<j$,
and set $w_{i+M}=w_{i+2M}=\ldots=w_{i+kM}=2$. 

The point $w$ now has the property that the blank symbols appear
in blocks of length at most $M-1$, and each such block is preceded
by a $2$ and is terminated by either a $2$ or the word $1^{M}2$.
Thus, no matter what symbols are eventually written in the blank sites
in $w$, no new occurrences of $1^{M}2$ can be formed, and $w'$
can be recovered:
\[
w'=\left\{ \begin{array}{cc}
1 & \mbox{if }1^{M}2\mbox{ occurs in }w\mbox{ at i}\\
0 & \mbox{otherwise}
\end{array}\right.
\]

We estimate the density of undefined symbols in $w$: Since the gap
between $1$'s in $w'$ are at least $M^{2}$ we have $\overline{s}^{*}(w',1)\leq1/M^{2}$,
so $s^{*}(w,1)=M\cdot s^{*}(w',1)\leq1/M$. Also, since the distance
between $2$'s in $w$ is at least $M$, we have $\overline{s}^{*}(w,2)\leq1/M$.
Therefore by choice of $M$, 
\[
\underline{s}^{*}(w,blank)\geq1-\overline{s}^{*}(w,1)-\overline{s}^{*}(w,2)\geq1-\frac{2}{M}>1-\frac{1}{2}\rho_{0}
\]

\subsubsection*{Encoding the empirical distribution}

Let 
\[
U=\{n\in\mathbb{N}\,:\,w_{n}\mbox{ is blank}\}
\]
and
\[
\rho=1-\underline{s}_{*}(U)
\]
so $\rho<\frac{1}{2}\rho_{0}$ and, as explained above, $\rho$ can
be recovered from $w$. Apply Lemma \ref{lem:selecting-uniform-subset}
to $(w',U,\frac{1}{3}\rho,1-\frac{1}{2}\rho)$ to obtain a subset
$I_{0}\subseteq U$ with 
\begin{eqnarray*}
\underline{s}_{*}(I_{0}) & > & \frac{1}{3}\rho\\
 & > & 0\\
\underline{s}_{*}(U\setminus I_{0}) & > & (1-\frac{1}{2}\rho)\underline{s}_{*}(U)\\
 & > & 1-\rho_{0}
\end{eqnarray*}
The sets $I_{0},U\setminus I_{0}$ are recoverable from $w$. 

Choose a measurable map $f:Y_{Q}\rightarrow\{0,1\}^{\mathbb{N}}$
such that $f(x)$ encodes the sequence of frequencies $(s(x,a))_{a\in\Sigma^{*}}$.
This is a shift-invariant function, so we can apply Lemma \ref{lem:coding-shift-invariant-data}
with the function $f$ to $w',x,$ and $I_{0}$ to define $w|_{I_{0}}$
in such a way that $w'$ and $w|_{I_{0}}$ determine $f(x)$, and
hence the frequencies $(s(x,a))_{a\in\Sigma}$. 

Thus no matter what information is later written in $w$, we can use
$w$ to recover $w',I_{0}$, hence $w|_{I_{0}}$, hence $(s(x,a))_{a\in\Sigma}$.
In particular $w$ determines $h_{n}(x)$ and $\rho_{n}$ for all
$n\geq1$.

\subsubsection*{Choosing $I_{1},I_{2},\ldots$}

Since $\sum\rho_{n}<1-\rho_{0}<\underline{s}(U\setminus I_{0})$ we
can apply Lemma \ref{lem:selecting-a-sequence-of-subsets} to $(w',U\setminus I_{0},(\rho_{n}/(1-\rho_{0}))_{n=1}^{\infty})$
to obtain disjoint subsets $I_{1},I_{2},\ldots\subseteq U\setminus I_{0}$
such that 
\[
\underline{s}_{*}(I_{n})\geq\frac{\rho_{n}}{1-\rho_{0}}\underline{s}_{*}(U\setminus I_{0})>\rho_{n}
\]
Since $w',U,I_{0},\rho_{n}$ are all recoverable from $w$ no matter
what is written later, the sets $I_{n}$ are recoverable as well.

\subsubsection*{Defining $\widetilde{\pi}_{n}$}

Let 
\[
\widetilde{\pi}_{n}(x)=(\pi_{n}(x),w')\in(\Sigma_{n}\times\{0,1\})^{\mathbb{Z}}
\]
(note that $w'$ depends measurably and equivariantly on $x$).

\subsubsection*{Coding $\pi_{n}(x)$}

For each $n=1,2,3,\ldots$, observe that
\begin{eqnarray*}
h(\widetilde{\pi}_{n}(x)|\widetilde{\pi}_{n-1}(x)) & = & h(\pi_{n}(x)|\pi_{n-1}(x),w')\\
 & \leq & h(\pi_{n}(x)|\pi_{n-1}(x))\\
 & = & h_{n}(x)-h_{n-1}(x)\\
 & < & \rho_{n}
\end{eqnarray*}
Furthermore, $\widetilde{\pi}_{n-1}(x)$ is aperiodic, since $w'$
is (this is the reason we introduced the $w'$: the sequence $\pi_{n}(x)$
might be periodic). Apply the relative generator theorem (Theorem
\ref{thm:relative-Krieger}) to $(y,z)=(\widetilde{\pi}_{n}(x),\widetilde{\pi}_{n-1}(x)))$
and $I_{n}$. We obtain a pattern $w|_{I_{n}}$ from which, together
with $\widetilde{\pi}_{n-1}(x)$ and $P_{\widetilde{\pi}_{n-1}(x)}$,
we can recover $\widetilde{\pi}_{n}(x)$ and in particular $\pi_{n}(x)$.

\subsubsection*{Summary}

Let us review what has transpired: We encoded an aperiodic sequence
$w'$ in $w$, with the property that it can be recovered later no
matter how the rest of $w$ is defined. The density of $w'$ itself
indicates the density of symbols needed for the encoding, so constructions
based on this number can be reproduced knowing only $w'$. Using this
we reserved a low-density set $I_{0}$ of blank sites, and encoded
the empirical distribution of $x$ in it. We then reserved subsets
$I_{1},I_{2},\ldots$ of the remaining blank symbols of sufficient
density that in $I_{n}$ one we could record the sequence $\pi_{n}(x)$,
the coding being unequivocal given $\pi_{n-1}(x)$ (which is encoded
in $I_{n-1}$).

\subsubsection*{Decoding}

We have already said almost everything about this. From $w$ we can
recover $w'$, hence $\rho$, hence $I_{0}$, hence the frequencies
$(s_{a}(x))_{a\in\Sigma^{*}}$. These determine $P_{\widetilde{\pi}_{n}(x)}$,
and also $\rho_{n}$ and hence $I_{n}$. Now inducing on $n=1,2,\ldots$
we recover $\pi_{n}(x)$ from $\widetilde{\pi}_{n-1}(x)$, $P_{\widetilde{\pi}_{n-1}(x)}$
and $w|_{I_{n}}$. The sequences $\pi_{n}(x)$, $n=1,2,\ldots$, determine
$x$.

\section{\label{sec:Proofs-of-Corollaries}Proofs of Corollaries \ref{cor:generator-with-entropy}
and \ref{cor:universal-Borel-systems}}

Here we fill in some details about Corollaries \ref{cor:generator-with-entropy}
and \ref{cor:universal-Borel-systems}. 

We begin with Corollary \ref{cor:generator-with-entropy}. In \cite[ Theorem 1.5]{Hochman2013b}
it was shown that if $Y$ is a mixing shift of finite type of topological
entropy $h$ and $(X,T)$ is a free Borel system whose invariant measures
are all of smaller entropy, or if this holds with one exception which
is Bernoulli of entropy $h$, then there is a $T$-invariant Borel
set $X_{0}\subseteq X$ such that $X\setminus X_{0}$ supports no
invariant probability measure, and a Borel embedding $\pi:X_{0}\rightarrow Y$. 

Let $Y_{1},Y_{2},\ldots\subseteq Y$ be a pairwise disjoint sequence
of mixing shifts of finite type (constructing such a sequence is elementary
and we omit the details). We define a sequence of pairwise disjoint
$T$-invariant Borel subsets $X_{1},X_{2},\ldots\subseteq X$ supporting
no $T$-invariant probability measures, and Borel embeddings $\pi_{i}:X_{i}\rightarrow Y_{i}$,
such that $\pi_{0}|_{X\setminus\bigcup X_{i}}\cup\bigcup\pi_{i}$
is an embedding of $X$ into $Y$.

Set $X_{1}=X\setminus X_{0}$ and apply Theorem \ref{thm:main} to
obtain an embedding $X_{1}\rightarrow Y_{1}$. 

Let $X_{2}=\pi_{0}^{-1}(\image(\pi_{0})\cap\image(\pi_{1}))$. Note
that $X_{2}\subseteq X_{0}$ and hence $X_{2}\cap X_{1}=\emptyset$.
Also, $\pi_{1}^{-1}\pi_{0}$ is an isomorphism of $X_{2}$ to a subset
of $X_{1}$ and hence $X_{2}$ supports no invariant probability measures.
Thus, we can apply Theorem \ref{thm:main} to obtain an embedding
$X_{2}\rightarrow Y_{2}$.

Proceeding inductively, we define $X_{n+1}=\pi_{0}^{-1}(\image(\pi_{0})\cap\image(\pi_{n}))$,
noting that it is disjoint from the previous sets because the $Y_{i}$
are disjoint; and is isomorphic to $(X_{n},T|_{X_{n}})$ and hence
supports no $T$-invariant probability measures. Thus, by Theorem
\ref{thm:main} we can find an embedding $\pi_{n+1}:X_{n+1}\rightarrow Y_{n+1}$.

It is now easy to see that $\pi_{0}|_{X\setminus\bigcup X_{i}}\cup\bigcup\pi_{i}$
is a Borel injection and of course $T$-equivariant, as required.

Turning now to Corollary \ref{cor:universal-Borel-systems}, we proceed
similarly. By \cite[Theorem 1.5]{Hochman2013b} (together with the
Ornstein isomorphism theorem to deal with the measure of maximal entropy),
if $(X,T),(Y,S)$ are Borel systems as in the statement of Corollary
\ref{cor:universal-Borel-systems}, and for the same $h$, then there
is a $T$-invariant Borel subset $X_{0}\subseteq X$ and Borel embedding
$\pi_{0}:X_{0}\rightarrow Y$. This can be improved to an embedding
$X\rightarrow Y$ by the same argument above, using the fact that
there are mixing SFTs embedded in $Y$. By symmetry there are also
embeddings $Y\rightarrow X$. One now applies a Cantor-Berenstein
argument, as in \cite[Proof of Proposition 1.4]{Hochman2013b}, to
obtain an isomorphism.

\bibliographystyle{plain}
\bibliography{bib}

\end{document}